\newcommand{\commentout}[1]{}
\newcommand{\R}{\mathbb{R}}
\newcommand {\e}  {\mathrm{e}}
\newcommand {\Chi} {{\bf \raise 2pt \hbox{$\chi$}} }
\newcommand{\norme}[1]{\left\lVert#1\right\rVert}
\newcommand{\diff}{\mathrm{d}}
\newtheorem{theorem}{Theorem}[section]
\newtheorem{lemma}[theorem]{Lemma}
\newtheorem{definition}[theorem]{Definition}
\newtheorem{proposition}[theorem]{Proposition}
\newtheorem{corollary}[theorem]{Corollary}
\numberwithin{equation}{section}
\newcommand{\qed}{{ \hfill
                      {\unskip\kern 6pt\penalty 500 \raise -2pt\hbox{\vrule\vbox to 6pt{\hrule width 6pt
                      \vfill\hrule}\vrule} \par}   }}
\title{\Large \bf Adaptation to DNA damage, an asymptotic approach for a cooperative non-local system}
\author{
	Alexis Leculier\thanks{Laboratoire Jacques-Louis Lions (LJLL), Sorbonne Universit\'e, 75205 Paris Cedex 06, France.\texttt{leculier@ljll.math.upmc.fr}} \and	
Pierre Roux\thanks{Mathematical Institute, University of Oxford, OX2 6GG Oxford, United Kingdom.
	\texttt{pierre.roux@maths.ox.ac.uk}} 
\thanks{Previously: Laboratoire Jacques-Louis Lions (LJLL), Sorbonne Universit\'e, 75205 Paris Cedex 06, France.}
}
\date{\today}
\begin{document}
\maketitle           
\begin{abstract}
Following previous works about integro-differential equations of parabolic type modelling the Darwinian evolution of a population, we study a two-population system in the cooperative case. First, we provide a theoretical study of the limit of rare mutations and we prove that the limit is described by a constrained Hamilton-Jacobi equation. This equation is given by an eigenvalue of a matrix which accounts for the diffusion parameters and the coefficients of the system. Then, we focus on a particular application: the understanding of a phenomenon called Adaptation to DNA damage. In this framework, we provide several numerical simulations to illustrate our theoretical results and investigate mathematical and biological questions.
\end{abstract}


\noindent{\bf Key-words}: : Adaptive evolution, Cooperative system, Lotka-Volterra equation, Hamilton-Jacobi equation, Viscosity solutions .
\newline
\noindent
{\bf AMS Class. No}:  35K60, 82C31, 92B20, 35Q84




%

\section{Introduction}


A common way to investigate evolutionary dynamics \cite{D,BJvF} is to model populations structured by a phenotipical trait with non-local partial differential equations \cite{BMP,BP,PB}. This methodology has the advantage of studying not only the final situation but also the fitness landscape and the possible evolutionary paths in a given setting  \cite{PKWT}. In those kind of models, the organisms are described by a trait $x\in\R^n$ and their density $n(x,t)$ expands or decays in function of both $x$ and the competition with other individuals. A simple  possibility to represent mutations along the trait $x$ is to use a Laplacian:
\begin{equation}\label{eqn:simple}
    \varepsilon\dfrac{\partial n^\varepsilon}{\partial t}(x,t) = \varepsilon^2\Delta n^\varepsilon(x,t) + n^\varepsilon(x,t)R(x,N(t)), \qquad N_\varepsilon(t) = \int_{\R^n} n^\varepsilon(x,t) dx.
\end{equation}
This type of model can be derived from individual based stochastic models in the large population limit \cite{CFM1,CFM2}. The parameter $\varepsilon$ provides a way to study the asymptotic limit of the model in the regime of small mutations and long time \cite{CC}. This procedure relies upon an Hamilton-Jacobi approach and was investigated for system \eqref{eqn:simple} in, \textit{e.g.}, \cite{BP,PB,BMP}. Indeed, this equation relies on the change of variable
\[ (x,  t) = \left( \frac{x}{\varepsilon} ,\frac{t}{ \varepsilon }\right). \]
This change of variables allows us to catch the effective behaviour of the solutions in large timescales. In a suitable setting, when $\varepsilon\to 0$, the solutions $n^\varepsilon$ of  \eqref{eqn:simple} concentrate into a sum of Dirac masses moving in time, and in the limit the location of emergent traits is driven by an Hamilton-Jacobi equation of the form
\begin{equation}\label{eqn:simple_HJ}
    \dfrac{\partial u}{\partial t} =  |\nabla u |^2 + R(x,N(t)), \qquad \max_{x\in\R^n} u(x,t) = 0.
\end{equation}

This type of non-local model was intensively studied and applied to many different biological contexts, for example adaptation of cancer to treatment \cite{CLLLAEC, LCC, LLHCP, PCLT}, epigenetics changes \cite{LCDH}, non-inherited antibiotic resistance \cite{BvdHGKD} or more generally long-time evolutionary dynamics \cite{FM, JR, M2}. Finally, we underline that a more realistic approach is to use an integral term and a mutation kernel (see for instance \cite{BP, B} and the references therein) since, in our case, it is tantamount to saying that the mutations are independent of birth.

\subsection{A model for two cooperative populations structured by a phenotypical trait}

We propose to study through this Hamilton-Jacobi procedure a system of non-local PDEs modelling two cooperative populations structured by a same phenotypical trait $x\in\R_+$ and described by their densities $n_1^\varepsilon(x,t)$ and $n_2^\varepsilon(x,t)$. This model is motivated by a particular application in genetics: the understanding of the so-called "adaptation to DNA damage" phenomenon"\footnote{The naming "adaptation to DNA damage" can be a bit misleading because it describes a metabolic response of the cells and not a genetic adaptation, so speaking about both adaptation to DNA damage and genetic adaptation from the perspective of evolutionary dynamics can be confusing sometimes. Nonetheless, this expression was used for so long among specialists and has gained so much momentum that it is now impossible to change.}. When the DNA of an eukaryotic cell is damaged, the cell cycle is stopped by a checkpoint and repair pathways are activated. If repair fails, the cells may escape the DNA damage checkpoint and reenter the cell cycle despite the damage being still present: the cell "adapted" to its DNA damage \cite{TGH,LMHUKH}. By dividing a cell population into two categories (normal cells and adapted cells) we can use a two populations system to study the characteristic time and variance of this phenomenon.

To the best of our knowledge, there is little research about the asymptotic behaviour of several species non-local PDEs in evolutionary dynamics. Existing works in this direction focus, for instance, on the influence of a spacial domain \cite{BM}, on organisms which specialise in order to consume particular resources \cite{DJMP}, on a model for juvenile-adult population undergoing small mutations \cite{CCP}, or on elliptic systems \cite{M2, LM, PS} for two species or on influence of a spacial domain \cite{BM}. 

The model we focus on writes
\begin{equation}
    \label{HJ:eq:main}
    \left\lbrace
    \begin{aligned}
    &\varepsilon \partial_t n_1^\varepsilon- \varepsilon^2 d_1\partial_{xx} n_1^\varepsilon= n_1^\varepsilon(r_1(x)  - N_\varepsilon(t)) + \delta_1(x) n_2^\varepsilon \quad \text{ for } (x, t) \in \mathbb{R}^+ \times \mathbb{R}^+,\\
    &\varepsilon \partial_t n_2^\varepsilon  - \varepsilon^2 \partial_{xx} d_2 n_2^\varepsilon = n_2^\varepsilon(r_2(x)  - N_\varepsilon(t) )+ \delta_2(x) n_1^\varepsilon\quad \text{ for } (x, t) \in \mathbb{R}^+ \times \mathbb{R}^+,\\
    &n_1(x, t=0) = n_{1,0}^\varepsilon(x), \qquad n_2^\varepsilon(x, t=0) = n_{2, 0}^0(x), \\
    &\partial_x n_1(x=0, t) = 0, \qquad \partial_x n_2^\varepsilon(x=0, t) = 0, \\
    &N_\varepsilon(t) = \int_{0}^{+\infty} \big( n_1^\varepsilon  (x,t) + n_2^\varepsilon(x,t) \big)dx,
    \end{aligned}\right.
\end{equation}
where $r_1(x)\geqslant0$ and $r_2(x)\geqslant0$ represent the intrinsic fitness of organisms with trait $x$ in the two populations. The terms $\delta_1(x)\geqslant 0$ and $\delta_2(x)\geqslant0$ are cooperative terms (or, in our application in Section \ref{sec5:Motiv}, conversion terms from one cell type to the other) between the two populations. The total number of cells $N_\varepsilon(t)$ represents the competition for resources.

The system can be summarised in the following compact form 
\begin{equation}
    \label{HJ:eq:main2}
    \varepsilon \partial_t \textbf{n}^\varepsilon - \varepsilon^2 \textbf{D} \partial_{xx} \textbf{n}^\varepsilon = \textbf{R}(x, N_\varepsilon) \textbf{n}^\varepsilon,
\end{equation}
with boundary conditions. Here $\textbf{n}^\varepsilon$ stands for the vector $(n_1^\varepsilon, n_2^\varepsilon)^T$ and $\textbf{D}, \ \textbf{R}$ for the following operators:
\begin{equation}
    \textbf{D} = \begin{pmatrix}  d_1 & 0 \\ 0 & d_2 \end{pmatrix}\quad \text{ and } \quad  
    \textbf{R}(x, N) = \begin{pmatrix} r_1(x) - N & \delta_1(x) \\ \delta_2(x) & r_2(x) - N \end{pmatrix}. 
\end{equation}
First, we assume that
\begin{equation}\label{HJ:Hyp:di}\tag{H1}
    d_1 , \ d_2 \geq 0 \quad \text{ and } \quad (d_1 , d_2 ) \neq (0, 0). 
\end{equation} 
Note that \eqref{HJ:Hyp:di} allows one of the two coefficients $d_1,d_2$ being equal to $0$, but not both at the same time. We will also assume that there exists $C_R, C_\delta>0$ such that
\begin{equation}
    \label{HJ:Hyp:Rdelta}\tag{H2}
    \begin{aligned}
    &\delta_i, r_i \in W^{2, \infty} \quad \text{ with } \quad  \| r_i \|_{W^{2, \infty}} \leq C_R \quad \text{ and } \| \delta_i \|_{W^{2, \infty}} \leq C_\delta,   \\
    &\delta_i > 0,  \quad \text{ and } \quad e^{C_\delta x}\delta_i(x) \underset{x \to +\infty}{\longrightarrow} +\infty.
    \end{aligned}
\end{equation}
An other hypothesis is
\begin{equation}
\label{HJ:Hyp:R}\tag{H3}
\begin{aligned}
\exists c_N, C_N>0: \qquad\forall x \in \mathbb{R}^+, \  &\min(r_1(x) + \delta_2(x)  -c_N, \ r_2(x) +\delta_1(x)  -c_N) \geq  0, \\
\forall x \in \mathbb{R}^+, \   &\max (r_1(x) + \delta_2(x)  -C_N, \ r_2(x) + \delta_1(x)  -C_N) \leq  0.  
\end{aligned}
\end{equation}
Finally, we assume that both initial conditions satisfy:
\begin{equation}
    \label{HJ:Hyp:Init_cond}\tag{H4}
     \begin{aligned}
    &ce^{\frac{{-ax^2-c}}{\varepsilon}} \leq n_{i, 0}^\varepsilon(x) \leq Ce^{\frac{{-Ax + C}}{\varepsilon}} \qquad \text{ with } \quad a, A, c, C>0,\\
    &c_N \leq N_\varepsilon (t=0) \leq C_N \quad \text{ and } \quad  n_{i,0}^\varepsilon \text{ are uniformly Lipshitz}.
    \end{aligned}
\end{equation}

\begin{theorem}\label{HJ:thm:existence}
Under the assumptions \eqref{HJ:Hyp:di}, \eqref{HJ:Hyp:Rdelta}, \eqref{HJ:Hyp:R} and \eqref{HJ:Hyp:Init_cond}, there exists a solution $\textbf{n}^\varepsilon$ to \eqref{HJ:eq:main}. Moreover, we have 
\[ c_N \leq N_\varepsilon(t) \leq C_N. \]
\end{theorem}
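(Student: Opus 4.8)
The plan is to construct the solution by a fixed-point/approximation scheme while simultaneously controlling the total mass $N_\varepsilon(t)$, since the nonlinearity enters only through $N_\varepsilon$. For a fixed continuous function $t\mapsto Q(t)$ with $c_N\le Q(t)\le C_N$, the system \eqref{HJ:eq:main} becomes linear and cooperative: the off-diagonal coupling terms $\delta_i(x)\ge 0$ ensure that the associated evolution operator is order-preserving, so a comparison principle is available. One first solves this linear cooperative system (for instance by a Galerkin approximation, or by semigroup theory after noting that the degenerate case $d_i=0$ for one index still yields a well-posed system because, by \eqref{HJ:Hyp:di}, at least one equation is genuinely parabolic and the other is a linear ODE in $t$ coupled through a bounded zeroth-order term), obtaining $\mathbf{n}^\varepsilon[Q]\ge 0$ with the regularity dictated by \eqref{HJ:Hyp:Rdelta}–\eqref{HJ:Hyp:Init_cond}. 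Then one defines the map $\Phi(Q)(t):=\int_0^\infty (n_1^\varepsilon[Q]+n_2^\varepsilon[Q])(x,t)\,dx$ and seeks a fixed point.

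The core of the argument is the a priori bound $c_N\le N_\varepsilon(t)\le C_N$, which must be established \emph{for the fixed-point map} so that $\Phi$ preserves the invariant set $\{c_N\le Q\le C_N\}$, and a posteriori holds for the genuine solution. Integrating the two equations of \eqref{HJ:eq:main} in $x$ over $\R^+$, using the Neumann boundary conditions to kill the diffusion terms, and adding, one gets, with $M_i(t)=\int_0^\infty n_i^\varepsilon\,dx$,
\begin{align*}
\varepsilon\frac{d}{dt}(M_1+M_2) &= \int_0^\infty\!\big(r_1(x)+\delta_2(x)-Q(t)\big)n_1^\varepsilon\,dx + \int_0^\infty\!\big(r_2(x)+\delta_1(x)-Q(t)\big)n_2^\varepsilon\,dx.
\end{align*}
By \eqref{HJ:Hyp:R}, when $Q(t)=c_N$ each bracket is $\ge 0$, so $N_\varepsilon$ cannot fall below $c_N$ once it starts there; when $Q(t)=C_N$ each bracket is $\le 0$, so $N_\varepsilon$ cannot exceed $C_N$. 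Making this rigorous requires a Gronwall-type argument on $N_\varepsilon=\Phi(Q)$: writing the bracket as $(\,\cdot\,-Q)$ plus $(Q-c_N)$ or $(Q-C_N)$, one bounds $|\varepsilon N_\varepsilon' - (\text{sign-definite part})|\le \|Q-c_N\|_\infty N_\varepsilon$ (resp. with $C_N$), and combines with the fact that $n_i^\varepsilon\ge 0$ to deduce $c_N\le N_\varepsilon(t)\le C_N$ on the whole interval where $c_N\le Q\le C_N$. Hence $\Phi$ maps the closed convex set $K=\{Q\in C([0,T]): c_N\le Q\le C_N\}$ into itself.

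To close the fixed point, one checks that $\Phi:K\to K$ is continuous and compact. Continuity follows from continuous dependence of the linear cooperative system on the coefficient $Q$ (comparison principle plus the bounds on $r_i,\delta_i$), and compactness from a uniform $C^1$ (in $t$) or Hölder bound on $N_\varepsilon$ coming from the displayed mass equation together with the $L^1$ bound $M_1+M_2\le C_N$; the Arzelà–Ascoli theorem then gives relative compactness in $C([0,T])$. Schauder's fixed point theorem yields $Q^\star$ with $\Phi(Q^\star)=Q^\star$, i.e. a solution $\mathbf{n}^\varepsilon$ of \eqref{HJ:eq:main} on $[0,T]$ with $c_N\le N_\varepsilon\le C_N$; since this bound is uniform in $T$ and the $L^1$ norm stays controlled, the solution extends to $[0,+\infty)$. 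The main obstacle I anticipate is twofold: handling the degenerate diffusion case $d_i=0$ cleanly within whatever linear existence framework is chosen (one cannot simply quote standard parabolic theory for the system, and must instead exploit the cooperative structure or treat the degenerate equation as an ODE coupled to the parabolic one), and ensuring the mass bounds are genuinely propagated rather than merely ``formally'' true — in particular justifying the integration by parts and the Gronwall step requires enough a priori integrability of $n_i^\varepsilon$, which is where the Gaussian lower bound and exponential upper bound in \eqref{HJ:Hyp:Init_cond} (and the growth condition $e^{C_\delta x}\delta_i(x)\to\infty$ in \eqref{HJ:Hyp:Rdelta}, controlling the tails) come into play.
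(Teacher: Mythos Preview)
Your Schauder-based approach is natural but contains a genuine gap in the invariance step. The claim that $\Phi$ maps $K=\{Q\in C([0,T]):c_N\le Q\le C_N\}$ into itself is false in general. Take the extreme case $Q(t)\equiv c_N$: by \eqref{HJ:Hyp:R} each bracket $r_i+\delta_j-Q$ is nonnegative, so $\varepsilon N_\varepsilon'\ge 0$, but the only upper control is $\varepsilon N_\varepsilon'\le (C_N-c_N)N_\varepsilon$, which allows $N_\varepsilon(t)=\Phi(Q)(t)$ to grow like $N_\varepsilon(0)e^{(C_N-c_N)t/\varepsilon}$, well beyond $C_N$. The barrier reasoning ``when $Q(t)=C_N$ each bracket is $\le 0$'' only constrains $N_\varepsilon$ when $Q=N_\varepsilon$, i.e.\ at the fixed point itself, not along the iteration. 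Your proposed Gronwall fix does not close the gap: splitting the bracket as $(r_i+\delta_j-c_N)+(c_N-Q)$ gives a sign-definite piece plus a remainder bounded by $\|Q-c_N\|_\infty N_\varepsilon$, which yields an exponential bound on $N_\varepsilon$, not the absolute bound $N_\varepsilon\le C_N$.

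The paper sidesteps this by running a Banach contraction in the full $(n_1,n_2)$ space rather than on $Q$: the invariant set is $\mathcal A=\{(n_1,n_2)\in C([0,T],(L^1)^2):n_i\ge 0,\ \int(n_1+n_2)\le N(0)e^{C_R T/\varepsilon}\}$, with a truncated reaction matrix $\widetilde{\textbf R}$ (saturating the $N$-argument at $c_N/2$ and $2C_N$). The large Gronwall bound $N(0)e^{C_R T/\varepsilon}$ \emph{is} preserved by the map, and contraction holds for small $T$. Only \emph{after} obtaining the self-consistent solution does the paper prove $c_N\le N_\varepsilon\le C_N$ via the barrier argument (if $N_\varepsilon(t_0)=c_N$ then $N_\varepsilon'(t_0)\ge 0$, and symmetrically at $C_N$), which a posteriori shows the truncation was inactive. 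Your Schauder route can be repaired the same way: run it on the larger set $\{0\le Q\le N(0)e^{C_R T/\varepsilon}\}$ (or truncate), obtain the fixed point, and establish the sharp mass bounds afterwards on the genuine solution.
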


The proof is an adaptation of the one presented in Appendix A of \cite{BMP}. We provide it in the Appendix for the sake of completeness.

\subsection{The main result}

We adopt the classical approach for Hamilton-Jacobi equations: we perform the so-called Hopf-Cole transformation by defining
\begin{equation}
    \label{HJ:HC}
    u_i^\varepsilon = \varepsilon \ln (n_i^\varepsilon), 
\end{equation}
in a such a way that if $n_i^\varepsilon$ converges to a Dirac mass at some point $(x_0,t_0)$, then it is sufficient to prove that $u_i(x_0,t_0)=0$ whereas $u_i(x,t)<0$ for $(x,t) \in B_r(x_0, t_0) \backslash \left\lbrace (x_0, t_0) \right\rbrace$. Therefore, we rewrite \eqref{HJ:eq:main} in the following form
\begin{equation}
    \label{HJ:eq:main:HC}
    \left\lbrace
    \begin{aligned}
    & \partial_t u_1^\varepsilon- \varepsilon d_1\partial_{xx} u_1^\varepsilon -  d_1[\partial_x u_1^\varepsilon]^2= (r_1(x)  - N_\varepsilon(t)) + \delta_1(x) e^{\frac{u_2^\varepsilon - u_1^\varepsilon}{\varepsilon}} \quad \text{ for } (x, t) \in \mathbb{R}^+ \times \mathbb{R}^+,\\
    &\partial_t u_2^\varepsilon- \varepsilon d_2\partial_{xx} u_2^\varepsilon - d_2 [\partial_x u_2^\varepsilon]^2= (r_2(x)  - N_\varepsilon(t)) + \delta_2(x) e^{\frac{u_1^\varepsilon - u_2^\varepsilon}{\varepsilon}}\quad \text{ for } (x, t) \in \mathbb{R}^+ \times \mathbb{R}^+,\\
    &u_1^\varepsilon(x, t=0) = u_{1,0}(x), \qquad u_2^\varepsilon(x, t=0) = u_{2,0}(x), \\
    &\partial_x u_1(x=0, t) = 0, \qquad \partial_x u_2^\varepsilon(x=0, t) = 0, \\
    &N_\varepsilon(t) = \int_{0}^{+\infty} \left( e^{\frac{u_1^\varepsilon(x,t)} {\varepsilon}} + e^{\frac{u_2^\varepsilon (x,t)}{\varepsilon}}\right) dx.
    \end{aligned}\right.
\end{equation}

Finally, following \cite{BES}, we introduce the effective Hamiltonian as known as one of the eigenvalue of $\rho^2\textbf{D} + \textbf{R}$ (associated to a constant sign eigen-vector):
\begin{equation}
    \label{HJ:def:H}
    \mathcal{H}_D(\rho, N) =\frac{d_1 + d_2}{2}\rho^2 +  \frac{ r_1 + r_2 + \sqrt{[(d_1 - d_2)\rho^2 +  (r_1 - r_2)]^2 + 4 \delta_1 \delta_2 }}{2 } - N(t). 
\end{equation}
We introduce the \textit{Hamiltonian fitness}
\begin{equation}\label{HJ:def:rh}
r_H^D (x, \rho) = \frac{ r_1 + r_2 + \sqrt{[(d_1 - d_2)\rho^2 +  (r_1 - r_2)]^2 + 4 \delta_1 \delta_2 }}{2 }
\end{equation}
such that 
\[\mathcal{H}_D(\rho, N) =\frac{d_1 + d_2}{2}\rho^2 + r_H^D (x,\rho) - N. \]
We will denote $\psi^\rho$ the corresponding principal eigen-vector:
\begin{equation}
    \label{HJ:def:psi}
    \psi^{\rho}(x) = \begin{pmatrix} 1 \\ \frac{ (d_1 - d_2) \rho^2 +  (r_1(x) - r_2(x))  +  \sqrt{((d_1 - d_2) \rho^2 + r_1(x) - r_2(x))^2 + 4 \delta_1(x) \delta_2(x) }}{2\delta_2(x)}\end{pmatrix}.
\end{equation}
All the components of $\psi^\rho$ can be chosen strictly positive. The other eigenvector, associated to the eigenvalue $\frac{ (d_1 + d_2)\rho^2 + r_1 + r_2 - \sqrt{ (d_1 - d_2)\rho^2 + [(r_1 - r_2) ]^2 + 4 \delta_1 \delta_2 }}{2 } - N(t)$, has a positive and a negative component.

\begin{theorem}\label{HJ:thm:main_thm}
Under the hypotheses \eqref{HJ:Hyp:di}, \eqref{HJ:Hyp:Rdelta}, \eqref{HJ:Hyp:R} and \eqref{HJ:Hyp:Init_cond}, there hold 
\begin{enumerate}
    \item The sequence $(N_\varepsilon)_{\varepsilon > 0}$ converges to a non-decreasing function $N \in L^\infty(\,]0, +\infty[\,)$ as $\varepsilon \to 0$ with 
    \[ c_N \leq N(t) \leq C_N.\]
    \item The sequence $(u_i^\varepsilon)_{\varepsilon>0, \ i \in \left\lbrace1,2 \right\rbrace}$ converges locally uniformly to a same continuous function $u$, with $u$ a viscosity solution of 
    \begin{equation}
        \label{HJ:eq:main_eq}
        \left\lbrace
        \begin{aligned}
         &\partial_t u = \mathcal{H}_D(\partial_x u , N) && \text{ for } (x,t) \in \mathbb{R}_+ \times\, ]0, +\infty[, \\
         &- \partial_x u (x=0, t)  = 0 && \text{ for } t>0,\\
         &\underset{ x \in \mathbb{R}_+}{\max}  \   u(x,t) = 0 ,\\
         &u(x,t=0) = \underset{\varepsilon\to0}{\lim} \  u_i^\varepsilon (x, t=\varepsilon ).
        \end{aligned}
        \right.
    \end{equation}
    \item The sequence $(n_i^\varepsilon)_{\varepsilon >0, \ i \in \left\lbrace 1,2 \right\rbrace}$ converges in the sense of measures to $n_i$. Moreover, we have 
    \[ \mathrm{supp} \  \ n_i( \cdot, t) \subset \left\lbrace u( \cdot , t) = 0 \right\rbrace. \]
\end{enumerate}
\end{theorem}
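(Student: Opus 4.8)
## Proof Strategy for Theorem 1.2

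The plan is to follow the now-classical Hamilton--Jacobi framework for integro-differential models (as in \cite{BP,BMP,BES}), adapted to the cooperative system, with the eigenvalue structure of $\rho^2\textbf{D}+\textbf{R}$ replacing the scalar fitness. I would proceed in four stages: \emph{a priori} bounds on $u_i^\varepsilon$, passage to the limit for $N_\varepsilon$, the viscosity-solution argument for $u$, and finally the concentration statement for $n_i^\varepsilon$.

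\textbf{Step 1: Uniform bounds and equicontinuity for $u_i^\varepsilon$.} Using the bound $c_N\le N_\varepsilon(t)\le C_N$ from Theorem~1.1 and the hypotheses \eqref{HJ:Hyp:Rdelta}, \eqref{HJ:Hyp:Init_cond}, I would first construct explicit super- and sub-solutions of the form $A|x|^2+B t+C$ (respectively $-a|x|^2-bt-c$) to the Hopf--Cole system \eqref{HJ:eq:main:HC}, exploiting that the coupling terms $\delta_i(x)e^{(u_j^\varepsilon-u_i^\varepsilon)/\varepsilon}$ are nonnegative (which gives a one-sided bound immediately) and that, because $\psi^\rho$ has strictly positive components, the combination $\delta_i(x)e^{(u_j^\varepsilon-u_i^\varepsilon)/\varepsilon}$ stays bounded along the dynamics — this is where the cooperative structure is essential. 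The comparison principle for the (weakly coupled, quasi-monotone) parabolic system then yields locally uniform bounds on $u_i^\varepsilon$ and on $\partial_x u_i^\varepsilon$, hence on $\partial_t u_i^\varepsilon$ via the equation, so that $(u_i^\varepsilon)$ is locally equi-Lipschitz in $(x,t)$. In addition I would show that $|u_1^\varepsilon - u_2^\varepsilon|\to 0$ locally uniformly: the difference $w^\varepsilon=u_1^\varepsilon-u_2^\varepsilon$ satisfies an equation where the dominant term is $-(\delta_2 e^{-w^\varepsilon/\varepsilon}-\delta_1 e^{w^\varepsilon/\varepsilon})$, which forces $w^\varepsilon=O(\varepsilon\ln(1/\varepsilon))$ (or better) near any point where both densities are nontrivial; this is what guarantees the two limits coincide.

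\textbf{Step 2: Limit of $N_\varepsilon$ and the constraint.} From $c_N\le N_\varepsilon\le C_N$ and the monotone structure coming from \eqref{HJ:Hyp:R} (the lower/upper bounds on $r_i+\delta_j$ are tailored so that $t\mapsto N_\varepsilon(t)$ is essentially non-decreasing, up to a vanishing correction), I would extract a subsequence converging a.e., and with some care — differentiating $N_\varepsilon$ and using the equation integrated in $x$ — upgrade this to convergence of the whole family to a non-decreasing $N\in L^\infty$. By Arzelà--Ascoli applied to Step~1, $u_i^\varepsilon\to u$ locally uniformly (same limit for $i=1,2$ by Step~1). The constraint $\max_x u(\cdot,t)=0$ follows as usual: $\max_x u\le 0$ because $N_\varepsilon$ is bounded below (otherwise mass would blow up), and $\max_x u\ge 0$ because $N_\varepsilon$ is bounded above (otherwise mass would vanish and the equation for $N_\varepsilon$ would be violated), using a Laplace-type estimate $\varepsilon\ln\int e^{u_i^\varepsilon/\varepsilon}\,dx\to \max_x u$.

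\textbf{Step 3: $u$ is a viscosity solution.} This is the technical heart. On test functions, passing to the limit in a single equation of \eqref{HJ:eq:main:HC} alone is obstructed by the term $\delta_i e^{(u_j^\varepsilon-u_i^\varepsilon)/\varepsilon}$, which is not controlled pointwise — so the key trick is to work with the scalar quantity built from the eigenvector, e.g. $v^\varepsilon = \varepsilon\ln\!\big(\psi_1^{\partial_x u^\varepsilon}\, n_1^\varepsilon + \psi_2^{\partial_x u^\varepsilon}\, n_2^\varepsilon\big)$, or more robustly to test the two equations simultaneously against $(\psi_1^\rho\varphi,\psi_2^\rho\varphi)$ at a contact point, so that the coupling terms recombine into $r_H^D(x,\rho)$ via the eigenvalue identity $(\rho^2\textbf D+\textbf R)\psi^\rho = \mathcal H_D(\rho,N)\psi^\rho + N\psi^\rho$. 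Concretely, at a point where $\varphi$ touches $u$ from above, both $u_i^\varepsilon$ are close to $\varphi$, the ratios $e^{(u_j^\varepsilon-u_i^\varepsilon)/\varepsilon}$ are close to $\psi_j^\rho/\psi_i^\rho$ by Step~1, and the $\varepsilon\partial_{xx}$ terms vanish in the limit; the limiting inequality is exactly $\partial_t\varphi\le \mathcal H_D(\partial_x\varphi,N)$. The subsolution inequality is symmetric. The Neumann boundary condition $-\partial_x u(0,t)=0$ is inherited from $\partial_x u_i^\varepsilon(0,t)=0$ by the standard argument (reflection / one-sided test functions), using the $W^{2,\infty}$ regularity of the coefficients near $x=0$.

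\textbf{Step 4: Concentration of $n_i^\varepsilon$.} Tightness from Theorem~1.1 (the upper bound $Ce^{(-Ax+C)/\varepsilon}$ propagates, giving uniform $L^1$ tails) gives, along a subsequence, $n_i^\varepsilon\rightharpoonup n_i$ in the sense of measures. If $(x_0,t_0)$ lies in the support of $n_i(\cdot,t_0)$ then $u(x_0,t_0)\ge 0$ necessarily (a neighbourhood carries positive mass, so $\varepsilon\ln n_i^\varepsilon$ cannot go to $-\infty$ there); combined with the constraint $\max u=0$ this forces $u(x_0,t_0)=0$, i.e. $\mathrm{supp}\,n_i(\cdot,t)\subset\{u(\cdot,t)=0\}$.

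\textbf{Main obstacle.} The delicate point is Step~3: controlling the coupling term $\delta_i e^{(u_j^\varepsilon-u_i^\varepsilon)/\varepsilon}$ uniformly and showing it converges to the ratio $\psi_j^\rho/\psi_i^\rho$ dictated by the principal eigenvector. This requires the fine estimate $|u_1^\varepsilon-u_2^\varepsilon|\to 0$ from Step~1 with good-enough rate, and a careful argument at contact points — most likely by the "test with the eigenvector" device rather than manipulating each equation in isolation. A secondary subtlety is proving convergence of the \emph{whole} family $N_\varepsilon$ (not just a subsequence) and its monotonicity, which is precisely what hypothesis \eqref{HJ:Hyp:R} is engineered to provide.
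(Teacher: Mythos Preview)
Your overall architecture is right, and in particular Step~3 is exactly the paper's device: one tests $u_i^\varepsilon-\varepsilon\ln\psi_i^{\rho_0}-\phi$ and uses the eigenvalue identity $(\rho^2\textbf D+\textbf R)\psi^\rho=\mathcal H_D(\rho,N)\psi^\rho$ so that the coupling term $\delta_i\,e^{(u_j^\varepsilon-u_i^\varepsilon)/\varepsilon}$ recombines into $\delta_i\psi_j^\rho/\psi_i^\rho$ and hence into $\mathcal H_D$. Step~4 is also the standard argument the paper gives.

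The genuine gap is in Step~2. You attribute the convergence (and near-monotonicity) of $N_\varepsilon$ to hypothesis \eqref{HJ:Hyp:R}, but \eqref{HJ:Hyp:R} only yields the \emph{a priori} bounds $c_N\le N_\varepsilon\le C_N$; it does not by itself give any BV control. The paper follows \cite{PB,BMP}: set $J_\varepsilon=N_\varepsilon'=\tfrac1\varepsilon\int(1\ 1)\textbf R\,\textbf n^\varepsilon$ and differentiate once more, which produces the quadratic term $\tfrac{1}{\varepsilon^2}\int(1\ 1)\textbf R^2\,\textbf n^\varepsilon$. For a scalar equation this is $\ge 0$ trivially; for the system it is \emph{not} obviously signed, and this is precisely the new difficulty. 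The paper's resolution uses the sharp ratio estimate $n_1^\varepsilon/n_2^\varepsilon=q_1+o(\varepsilon^3)$ (equivalently $u_1^\varepsilon-u_2^\varepsilon=\varepsilon\ln q_1+O(\varepsilon^5/t)$, which is why the exponent in the $\varepsilon^4/t$ correction matters) together with the algebraic identities $r_j+\delta_j q_i=r_H^D$ and $r_i+\delta_i q_i^{-1}=r_H^D$ to rewrite $(1\ 1)\textbf R^2\textbf n^\varepsilon=(1+q_1)[r_H^D-N_\varepsilon]^2 n_2^\varepsilon+o(\varepsilon^3)(n_1^\varepsilon+n_2^\varepsilon)$, hence $\ge -C\varepsilon^2$ after integration. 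Your Step~1 only claims $|u_1^\varepsilon-u_2^\varepsilon|=O(\varepsilon\ln(1/\varepsilon))$; that rate is too weak --- you need the specific leading term $\varepsilon\ln q_i$ with an $o(\varepsilon^3)$ remainder, and $q_i$ depends on $\partial_x u_j^\varepsilon$ when $d_1\neq d_2$.

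A related subtlety you do not address: when $d_1\neq d_2$, the definition of $q_i$ involves $\partial_x u_j^\varepsilon$, so one cannot prove the sharp ratio estimate before one has local Lipschitz bounds on $u_i^\varepsilon$; but the Bernstein argument for $|\partial_x u_i^\varepsilon|$ in turn needs a bound on the coupling term $\delta_i e^{(u_j^\varepsilon-u_i^\varepsilon)/\varepsilon}$. The paper breaks this circularity by first proving a \emph{weak} ratio estimate with an auxiliary quantity $q_i^+$ (independent of the sign of $d_i-d_j$ and satisfying $\ln q_i^+\ge -C(x+1)$ unconditionally), using it to run Bernstein, and only then upgrading to the sharp estimate with $q_i$. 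Your Step~1 sketch (comparison principle for the weakly coupled system) does not capture this bootstrap.
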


\subsection{ Outline of the paper  }

In Section \ref{sec2:Approach}, we detail the general approach and state the main technical results that lead to the proof of Theorem\ref{HJ:thm:main_thm}. Section \ref{Sec3:interm} is devoted to the proofs of these technical results. In section \ref{Sec4:HJ}, we prove Theorem\ref{HJ:thm:main_thm}. Next, in section \ref{sec5:Motiv}, we detail the biological context that motivates our theoretical study. Finally, in section \ref{sec6:Num}, we illustrate our theoretical study by some numerical simulations in the framework given by our biological motivations. We also investigate numerically some open questions. 

\vspace{0.5cm}

\noindent \textbf{Notations :} All along the paper, we adopt the following conventions:
\begin{itemize}
\item the letters $i,j$ refer, when there is no confusion possible, to an index in $\left\lbrace 1,2 \right\rbrace$,
\item if $i$ and $j$ are used in a same equation then $i \neq j$, 
\item the bold mathematical characters are strictly reserved for vectors of $\mathbb{R}^2$ or matrix of $\mathcal{M}_2(\mathbb{R})$, 
\item the constants $c, C$ are taken positive and may change from line to line when there is no confusion possible (the capital letter is preferentially used for large constants and the small letter for small constants). 
\end{itemize} 

\section{The Hamilton Jacobi approach}\label{sec2:Approach}

We develop in this part of the work a general approach for non-local cooperative systems. For technical reasons, we focus on a model with only two species and a uni-dimensional space. This part is largely inspired by \cite{BES} and \cite{BMP}, but since we study a coupled system, we cannot use the same arguments straight away. Unlike in the articles \cite{BP} and \cite{BMP} for the single species problem, it is not possible to obtain directly a uniform BV estimate for the total mass $N_\varepsilon(t)$.  There are additional mixing terms and, \textit{a priori}, nothing prevents them to blow-up when $\varepsilon$ goes to $0$. Moreover, one can not apply directly the method of \cite{BES} because the non-local total mass does not prevent the logarithm of the solution to be positive. We will circumvent these issues by employing a combination of the two former approaches.

\subsection{The approach}

Before dealing with the mathematical details, we propose an overview of the classical methods to treat this kind of problem as well as a presentation of heuristic arguments. 

A local version of \eqref{HJ:eq:main2} was studied in \cite{BES} (i.e. with $N_\varepsilon$ replaced by $(n_i^\varepsilon)_{i \in \left\lbrace1,2 \right\rbrace}$). Moreover, the authors focus on general systems with more that two equations. We do not obtain the same level of generality than \cite{BES}. As we will see later, the hypothesis of having only two equations (rather than several) is a key hypothesis in our work. 
From a technical point of view, Barles, Evans and Souganidis do not prove any regularity results on $u_i^\varepsilon$ but they study the system through the semi-relaxed limit method by defining 
\[ u_* (x,t) = \underset{i \in \left\lbrace 1,2 \right\rbrace}{\min}(\underset{(y,s) \to (x,t)}{\underset{\varepsilon \to 0}{\liminf} } \  u_i^\varepsilon(y,s)) \qquad \text{ and } \qquad u^*(x,t) = \underset{i \in \left\lbrace 1,2 \right\rbrace}{\max}(\underset{(y,s) \to (x,t)}{\underset{\varepsilon \to 0}{\limsup} } \  u_i^\varepsilon(y,s)). \]
We did not succeed in adapting this idea without proving any regularity results on $u_i^\varepsilon$. Indeed, with the semi-relaxed limit approach, one key point is to prove that $u^* \leq 0$. In \cite{BES}, this claim is true; otherwise, it would be in contradiction with some natural bounds on $n_\varepsilon$ (obtained with the maximum principle). However, in our setting without any regularity result in space on $u_i^\varepsilon$, even if we have natural bounds on the total mass $N_\varepsilon$, nothing prevents the solution $u^*$ to be positive at a singular point. Indeed, contrary to the problem studied in \cite{BES}, $u_i^\varepsilon$ may be positive on a sequence of intervals $I_\varepsilon$ with $\lambda(I_\varepsilon ) \to 0$ (where $\lambda$ stands for the Lebesgue measure). 

Therefore, we state regularity results in space on $u_i^\varepsilon$. Our result generalizes the case of the single population equation \eqref{eqn:simple} (\textit{i.e.} $\delta_i = 0$ and $n_2 = 0$). In the first works treating this equation \cite{BP,PB,BMP}, the main result on the convergence of $u_\varepsilon$ was obtained by proving some BV-estimates on $N_\varepsilon$ and some bounds on $|\partial_x u_1^\varepsilon|$ by using the Bernstein method. Then obtaining the Lipschitz regularity of $u_1^\varepsilon$ with respect to time leads to the convergence by using the Arzela-Ascoli Theorem. Before, dealing with the Hamilton-Jacobi equation \eqref{HJ:eq:main_eq}, we prove the convergence of $N_\varepsilon$ toward subsequence. We adapt the proof of \cite{PB} (Theorem 3.1) and \cite{BMP} (Theorem 2.4). The proof of the Theorem of \cite{PB} involves the positiveness of $r^2$ (equation (3.5) of \cite{PB}). In our work, it is not clear in general that
\[0 \leq  \begin{pmatrix} 1 & 1 \end{pmatrix}\textbf{R}^2  \begin{pmatrix} n_1 \\ n_2 \end{pmatrix},    \]
the right-hand side being what we would obtain in place of $r^2$.   

To tackle this issue, we propose a precise estimate of $\frac{n_1^\varepsilon}{n_2^\varepsilon}$. Indeed, this estimate ensures that the exponential term is bounded and then one can apply the classical Bernstein method to obtain regularity in space. From this space regularity, we will deduce that $u_i^\varepsilon$ is Lipschitz with respect to time. Finally from this last result, we deduce that the family $N_\varepsilon$ converges. It will allow us to conclude. \\
We underline that the estimate of $\frac{n_1^\varepsilon}{n_2^\varepsilon}$ plays a similar role than the Harnack estimates obtained in \cite{M,LM} in elliptic settings. 

We formally write a Taylor expansion of $u_i^\varepsilon$:
\[ u_i^\varepsilon = u_i + \varepsilon v_i + o(\varepsilon).\]
We first expect that $u_1 = u_2 = u$ since we do not expect a blow up of the exponential term. Next, by subtracting the two equations and using the fact that $u_1 = u_2$, we obtain
\[ \begin{aligned}\big[(d_1 - d_2)(\partial_x u)^2 + (r_1 - r_2 ) \big]\frac{n_1}{n_2} + \delta_1  - \delta_2 \left(\frac{n_1}{n_2}\right)^2 = \varepsilon \big(\partial_t (v_1 - v_2)  + &  d_1[\partial_x v_1]^2 - d_2[\partial_x v_2]^2  \\
& - 2\partial_x u \partial_x(d_1v_1 - d_2v_2) +o(\varepsilon) \big). \end{aligned}\]
Taking formally, the limit $\varepsilon \to 0$, we expect
\[\frac{n_1 }{n_2} \sim \frac{[(d_1 - d_2)(\partial_x u)^2 + (r_1 - r_2 ) ] + \sqrt{[(d_1 - d_2)(\partial_x u)^2 + (r_1 - r_2 ) ]^2 + 4\delta_1\delta_2} }{2 \delta_2}.    \] 
The above expression involves $\partial_x u$ which is not clearly defined yet. Notice here that in the special case $d_1 = d_2$ the formula is simpler since the right-hand part is only defined thanks to the cfunctions $r_1,r_2,\delta_1,\delta_2$ and we expect
\[\frac{n_1 }{n_2} \sim \frac{ (r_1 - r_2 )  + \sqrt{ (r_1 - r_2 )^2 + 4\delta_1\delta_2} }{2 \delta_2}.\] 

\begin{definition}
Let $q_i$ be the unique positive root of 
\[P_{d_i, d_j}(X) = ([d_i -d_j](\partial_x u_j^\varepsilon)^2 + r_i -r_j)X+ \delta_i - \delta_j X^2,\]
\begin{equation}
  \text{ i.e. }   \label{HJ:app:eq:q}
    q_i = \frac{ ([d_i - d_j ] (\partial_x u_j^\varepsilon)^2 + r_i - r_j) + \sqrt{([d_i - d_j ] (\partial_x u_j^\varepsilon)^2 + r_i - r_j)^2 + 4 \delta_i \delta_j}}{2 \delta_j }.
\end{equation}
\end{definition}
The fact that $\mathrm{deg}(P_q) = 2$ is important because it allows us to make a reasoning on the sign of $P_q$. Next, with this definition, we state the main technical statements that are necessary to prove Theorem\ref{HJ:thm:main_thm}. 

\begin{theorem}\label{HJ:thm:interm_u}
Under the hypotheses \eqref{HJ:Hyp:di}, \eqref{HJ:Hyp:Rdelta}, \eqref{HJ:Hyp:R} and \eqref{HJ:Hyp:Init_cond}, the following assertions hold true.
\begin{enumerate}
    \item \textup{\texttt{Bounds. } } There exists $a',A',b,B$ such that 
    \begin{equation}
    \label{HJ:eq:ubd}
    -bt - a'x^2 -c \leq  u_i^\varepsilon (x,t)\leq  Bt - A'x + C. 
    \end{equation}
    \item \textup{\texttt{Space regularity. } } For any times $0<t_1<T$ and $R>0$ there exists a constant $C_{t_1, T, R}>0$ such that 
    \begin{equation}
    \underset{(x,t) \in [0, R] \times [t_1, T]}{\max}|\partial_x u_i^\varepsilon(x,t)| \leq C_{t_1, T, R}.
    \end{equation}
    \item \textup{\texttt{Ratio $\frac{n_1}{n_2}$. } } For any positive time, we have 
    \begin{equation}\label{hj:eq:ratio_q}
\begin{aligned}
&\varepsilon \left[ \ln (q_1(x, t)) - \frac{\varepsilon^4}{t} \right] \leq u_1^\varepsilon(x,t) - u_2^\varepsilon (x,t) \leq \varepsilon \left[ \ln (q_1(x, t)) + \frac{\varepsilon^4 }{t} \right]\\ 
\hspace{-1cm} \text{ and } \qquad& \varepsilon \left[ \ln (q_2(x,t)) - \frac{\varepsilon^4}{t} \right] \leq u_2^\varepsilon(x,t) - u_1^\varepsilon (x,t) \leq \varepsilon \left[ \ln (q_2(x,t)) + \frac{\varepsilon^4 }{t} \right].
\end{aligned}
\end{equation}
    \item \textup{\texttt{Time regularity. } } The family $(u_i^\varepsilon)_{\varepsilon>0 , i\in \left\lbrace 1, 2 \right\rbrace}$ is locally uniformly continuous with respect to time. 
\end{enumerate}
\end{theorem}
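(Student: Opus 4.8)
The plan is to prove the four items in the order (1), (3), (2), (4) — the gradient bound (2) resting on the ratio estimate (3), and the time regularity (4) on (2), following the strategy sketched above. A recurring principle is that for each fixed $\varepsilon>0$ the solutions $n_i^\varepsilon$ of Theorem~\ref{HJ:thm:existence} are smooth and positive for $t>0$, so that $u_i^\varepsilon=\varepsilon\ln n_i^\varepsilon$ solves \eqref{HJ:eq:main:HC} classically there and the maximum‑principle computations below are licit; what must be extracted is that the constants produced do not depend on $\varepsilon$. For the \emph{bounds} (1), I would argue by comparison on \eqref{HJ:eq:main} itself: since $\delta_i\ge 0$ the system is cooperative, so the parabolic comparison principle holds for the pair $(n_1^\varepsilon,n_2^\varepsilon)$ in the growth class of Theorem~\ref{HJ:thm:existence}. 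One checks that $(\bar n,\bar n)$ with $\bar n=C\,e^{(Bt-A'x+C)/\varepsilon}$ is a supersolution: the exponential factors out and it suffices that $B-d_i(A')^2\ge r_i(x)-N_\varepsilon(t)+\delta_i(x)$, true for $B$ large by \eqref{HJ:Hyp:Rdelta} and $N_\varepsilon\ge c_N$; the Neumann condition holds since $\partial_x\bar n(0,t)\le 0$, and the initial inequality follows from \eqref{HJ:Hyp:Init_cond} on taking $A'=A$. Symmetrically $(\underline n,\underline n)$ with $\underline n=c\,e^{(-bt-a'x^2-c)/\varepsilon}$ is a subsolution once $-b+2a'd_i\varepsilon\le r_i-N_\varepsilon+\delta_i$ for all $x$ (the term $-4(a')^2d_ix^2$ only helping), true for $b$ large since $r_i\ge 0$, $\delta_i>0$, $N_\varepsilon\le C_N$; here $\partial_x\underline n(0,t)=0$, and the initial inequality is again \eqref{HJ:Hyp:Init_cond}. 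Taking $\varepsilon\ln(\cdot)$ yields \eqref{HJ:eq:ubd}.

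The heart of the theorem is the \emph{ratio estimate} (3), which I also expect to be the main obstacle. Setting $w=n_1^\varepsilon/n_2^\varepsilon=e^{(u_1^\varepsilon-u_2^\varepsilon)/\varepsilon}$, inserting $n_1^\varepsilon=w\,n_2^\varepsilon$ into the first line of \eqref{HJ:eq:main} and using the second line to eliminate $\partial_t n_2^\varepsilon$ and $\partial_{xx}n_2^\varepsilon$, one obtains the scalar parabolic equation
\[ \varepsilon\,\partial_t w-\varepsilon^2d_1\,\partial_{xx}w-2\varepsilon d_1\,(\partial_x w)(\partial_x u_2^\varepsilon)+\varepsilon(d_2-d_1)\,w\,\partial_{xx}u_2^\varepsilon=P_{d_1,d_2}(w), \]
whose reaction term is exactly the quadratic $P_{d_1,d_2}$ appearing before \eqref{HJ:app:eq:q}: a downward parabola in $w$ with $P_{d_1,d_2}(0)=\delta_1>0$ and unique positive root $q_1$, so $P_{d_1,d_2}>0$ on $(0,q_1)$ and $<0$ on $(q_1,+\infty)$, and $q_1$ is bounded above and below by positive constants on every compact $x$‑interval (as $r_i,\delta_i$ are bounded with $\delta_i>0$). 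This monotone sign structure makes the $\varepsilon^{-1}$‑rescaled dynamics $\varepsilon\partial_t w\simeq P_{d_1,d_2}(w)$ contract towards $q_1$ exponentially fast in $1/\varepsilon$. I would then compare $w$ with barriers $q_1\,e^{\pm g_\varepsilon(t)}$, where $g_\varepsilon(0^+)=+\infty$ and $g_\varepsilon$ decays away from $t=0$ at a rate tuned to give the tolerance $\varepsilon^4/t$ of \eqref{hj:eq:ratio_q}; the blow‑up at $t=0$ removes the need to track the (a priori $\varepsilon$‑dependent) initial ratio, and the singular term $\mp\varepsilon g_\varepsilon'$ supplies the slack. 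Concretely, running the argument as a maximum‑principle argument on $u_1^\varepsilon-u_2^\varepsilon-\varepsilon\ln q_1\mp\varepsilon g_\varepsilon(t)$, at an interior extremum the first‑order relation pins $\partial_x w$, and the $\varepsilon$‑ and $\varepsilon^2$‑weighted second‑order contributions are re‑expressed through the equations for $u_1^\varepsilon,u_2^\varepsilon$ and the identity $P_{d_1,d_2}(q_1)=0$, then absorbed into the slack. \textbf{The difficulty} is precisely that this $w$‑equation still carries $\partial_x u_2^\varepsilon$ and $\partial_{xx}u_2^\varepsilon$, not yet controlled uniformly in $\varepsilon$ (that is the content of (2), proved afterwards); the argument must be arranged so the $\varepsilon$‑prefactors and the smoothness of $u_i^\varepsilon$ for $t>0$ close the loop with $\varepsilon$‑independent constants. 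The second line of \eqref{hj:eq:ratio_q} follows by the symmetric argument for $n_2^\varepsilon/n_1^\varepsilon$; minor modifications handle the degenerate cases $d_1=0$ or $d_2=0$.

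For the \emph{space regularity} (2), by (3) the term $e^{(u_j^\varepsilon-u_i^\varepsilon)/\varepsilon}$ is bounded on every $[0,R]\times[t_1,T]$. Differentiating \eqref{HJ:eq:main:HC} in $x$ gives, for $p_i=\partial_x u_i^\varepsilon$,
\[ \partial_t p_i-\varepsilon d_i\,\partial_{xx}p_i-2d_i p_i\,\partial_x p_i=r_i'+\delta_i'\,e^{(u_j^\varepsilon-u_i^\varepsilon)/\varepsilon}+\tfrac{\delta_i}{\varepsilon}\,(p_j-p_i)\,e^{(u_j^\varepsilon-u_i^\varepsilon)/\varepsilon}, \]
so $w_i:=|p_i|^2$ satisfies $\partial_t w_i-\varepsilon d_i\partial_{xx}w_i-2d_i p_i\partial_x w_i\le C+Cw_i+\tfrac{2\delta_i}{\varepsilon}\,e^{(u_j^\varepsilon-u_i^\varepsilon)/\varepsilon}(p_ip_j-p_i^2)$, with $\varepsilon$‑independent $C$ thanks to (3). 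The only dangerous term is the last one; the key point — where having exactly two equations matters — is that at a maximum of $\max(w_1,w_2)$, say attained by $w_1$, one has $|p_2|\le|p_1|$, whence $p_1p_2-p_1^2\le 0$ and, since $\delta_1e^{(u_2^\varepsilon-u_1^\varepsilon)/\varepsilon}>0$, that term is $\le 0$ there. It then remains to carry out the classical Bernstein localisation of \cite{BP,BMP}: multiply $w_i$ by a cutoff $\chi=\zeta^2$ supported in $[0,R]$ and equal to $1$ on $[0,R/2]$ (the negative term $-2\varepsilon d_i(\partial_x p_i)^2$ absorbing the cutoff corrections), use $p_i(0,t)=0$ (Neumann) and the uniformly Lipschitz initial data of \eqref{HJ:Hyp:Init_cond}, and conclude by the maximum principle that $\max(w_1,w_2)$ is bounded on $[0,R/2]\times[t_1,T]$, which is (2).

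Finally, the \emph{time regularity} (4) follows from (1)--(3): with the uniform spatial Lipschitz bound of (2), the right‑hand side of \eqref{HJ:eq:main:HC} is bounded uniformly on $[0,R]\times[t_1,T]$ (the exponential by (3); $r_i,\delta_i,N_\varepsilon$ by the hypotheses and Theorem~\ref{HJ:thm:existence}; $(\partial_x u_i^\varepsilon)^2$ by (2)), and the standard argument of \cite{BMP} — a uniform spatial Lipschitz bound together with the equation forcing a uniform local modulus of continuity in time — gives the local uniform continuity in time, completing the proof.
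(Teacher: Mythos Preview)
There is a genuine gap in your ordering (1)$\to$(3)$\to$(2)$\to$(4). The quantity $q_1$ in \eqref{HJ:app:eq:q} involves $(d_1-d_2)(\partial_x u_2^\varepsilon)^2$ with \emph{no} $\varepsilon$-prefactor, so when $d_1\neq d_2$ neither the size nor the sign of $\ln q_1$ is controlled until (2) is known. In particular, for $d_1<d_2$ and large $|\partial_x u_2^\varepsilon|$ one has $q_1\to 0^+$, so your barrier $q_1\,e^{\pm g_\varepsilon(t)}$ is not a usable upper barrier (it can lie below $w$ everywhere), and the identity $P_{d_1,d_2}(q_1)=0$ does not help because $q_1$ itself drifts with $\partial_x u_2^\varepsilon$. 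You correctly flag this as ``the difficulty'', but the sentence ``the $\varepsilon$-prefactors \dots\ close the loop'' does not resolve it: there is no $\varepsilon$ in front of the offending term. The same circularity reappears in your argument for (2), where you assume $e^{(u_j^\varepsilon-u_i^\varepsilon)/\varepsilon}$ is already bounded on compacta ``by (3)''.

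The paper breaks the loop by inserting an intermediate step between (1) and (2): it replaces $q_i$ by a larger quantity $q_i^+$ (equal to $q_i$ when $d_i\ge d_j$, and to the square-root part divided by $2\delta_j$ when $d_i<d_j$) for which $\ln q_i^+\ge -C(x+1)$ holds \emph{without} any gradient information (Lemma~\ref{HJ:App:lemma}). A maximum-principle argument with a spatial penalty $\mu^{-1}(x-x_0)^2$ then gives the weak ratio bound $u_i^\varepsilon-u_j^\varepsilon\le\varepsilon[\ln q_i^++\varepsilon^4/t]$ (Lemma~\ref{HJ:lemma:weakAsympt}), whence $e^{(u_j^\varepsilon-u_i^\varepsilon)/\varepsilon}\le Ce^{C_\delta x}\big(|\partial_x u_i^\varepsilon|^2+1\big)$ (Corollary~\ref{HJ:corol1}). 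This is \emph{not} a uniform bound --- it still carries $|\partial_x u_i^\varepsilon|^2$ --- but in the Bernstein argument for (2) that extra quadratic factor is absorbed by the cubic term produced by the change of variables $u_i^\varepsilon=2D-(v_i^\varepsilon)^2$; your observation that $p_ip_j-p_i^2\le 0$ at a maximum of $\max(|p_1|,|p_2|)$ is precisely the mechanism the paper uses there. Only after (2) is established does the paper bound $\ln q_i$ itself (Lemma~\ref{HJ:Lemma:q}) and deduce the full (3). Your treatments of (1) and (4) are fine --- for (1) your cooperative comparison on $n_i^\varepsilon$ is a legitimate and slightly cleaner alternative to the paper's $u_i^\varepsilon$-level argument --- but (3)$\to$(2) as written does not close.
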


Remark that the third item (the ratio estimates) comes after the space regularity result since when $d_1<d_2$, if $\partial_x u_i^\varepsilon$ is not locally bounded with respect to $\varepsilon$, one can not conclude the proof of \eqref{hj:eq:ratio_q}. However, to prove the space regularity result, one needs an estimate similar to \eqref{hj:eq:ratio_q}. We prove a weaker version of \eqref{hj:eq:ratio_q} as an intermediate result but we state only the stronger result in the theorem above. We also highlight that the terms $\varepsilon^4$ has an exponent 4 that will be used in the proof of point 1. of Theorem\ref{HJ:thm:main_thm}.

\subsection{The special case $d_1 = d_2 = 1$}\label{HJ:sec:LongTermBehavior}

In this special setting, note that $q_i$ does not involve $\partial_x u_j^\varepsilon$ anymore. Therefore, the point 3. of Theorem\ref{HJ:thm:interm_u} can be obtained directly by observing that 
\[ - C(x+1) \leq \ln(q(x)) \leq C (x+1)\]
(for some large constant $C>0$). We refer to the forthcoming proof of Lemma \ref{HJ:lemma:weakAsympt} for more details. It follows that the point 2. of Theorem\ref{HJ:thm:interm_u} can be obtained from the point 3. 

\vspace{0.2cm}

Last, we can also derive formally a simpler equivalent equation for system \eqref{eqn:reduced_model} in the long time limit. We can assume $n_{1,\infty}(x)\simeq q(x)n_{2,\infty}(x)$ when $t\to +\infty$ and thus the quantity 
\[w(x) = n_{1,\infty}(x)+ n_{2,\infty}(x) = n_{2,\infty}(x)(1+q(x))\]
should satisfy the equation
\begin{equation}\label{eqn:scalar}
    - \varepsilon^2 \dfrac{\partial^2 w}{\partial x^2}(x) = w(x) \big(r_\infty(x) - N(t) \big), 
\end{equation}
with $N(t) =\int_0^{+\infty}w(x,t)dx$ and where the global fitness function $r_\infty$ of the system writes
\[ r_\infty(x) = \dfrac{q(x)}{1+q(x)} \left( r_1(x) + \delta_2(x)\right) + \dfrac{1}{1+q(x)}(r_2(x)+\delta_1(x)). \]
Equation \eqref{eqn:scalar} is well understood. It is proved in \cite{LP,AV} that for each $\varepsilon$ there exists a unique solution which is the ground state of the Schroedinger operator 
\[  \hat H:= - \varepsilon^2 \Delta - r_\infty.    \]
First, we remark that 
\[q =  \frac{[r_1 - r_2] + \sqrt{[r_1 - r_2]^2 + 4 \delta_1 \delta_2 }}{2 \delta_2} \quad \text{ and } \quad q^{-1} = \frac{[r_2 - r_1] + \sqrt{[r_2 - r_1]^2 + 4 \delta_1 \delta_2 }}{2 \delta_1}.\]
Recalling the definition \eqref{HJ:def:rh}, we notice that
\[  r_H^{I_2} = \delta_2 q + r_2 = \delta_1 q^{-1} + r_1.\] 
We conclude
 \begin{equation}  \label{HJ:eq:rHrinfty}
r_\infty = \dfrac{1}{1+q}\big(  q[r_1 + q^{-1}\delta_1] + [r_2 + q \delta_2] \big) = r_H^{I_2}(x). 
\end{equation}

Hence, the Hamiltonian fitness referred to above describes the behaviour of the system in both the limits $\varepsilon\to0$ and $t\to+\infty$. This function is formally the equivalent fitness of the overall system formed by the two cooperating populations. They adjust their fitness parameter $x$ in function of the maximum points of $r_H(x)$.

\section{The intermediate technical results}\label{Sec3:interm}

Here, we prove all the statements of Theorem\ref{HJ:thm:interm_u} and some intermediate results that are not stated above. 

\subsection{Bounds on $u_i^\varepsilon$}

First, we focus on the bounds for $u_i^\varepsilon$. The method is quite standard but some new difficulties arise from the interplay between the two populations.

\begin{proof}[Proof of 1. of\ref{HJ:thm:interm_u}.]
We split the proof into two parts : the upper bound and then the lower one. 

\textbf{The upper bound. } First, we define $\psi  = - A'x+ Bt + C$ with $A',B>0$ and $A' < A$ that will be fixed later on. We also introduce $w^\varepsilon = \max(u_1^\varepsilon, u_2^\varepsilon)$ and $i \in \left\lbrace 1,2 \right\rbrace$ the corresponding integer. 
 From assumption \eqref{HJ:Hyp:Init_cond}, it is clear that $w^\varepsilon (t= 0) \leq \psi(x, t= 0)$. Next, we consider 
\[ T := \inf \left\lbrace t>0: \quad \exists x > 0, \ w^\varepsilon (x,t) > \psi(x,t) \right\rbrace.\]
We prove by contradiction that $T = +\infty$. Assume $T < +\infty$. We distinguish two cases :
\begin{itemize}
    \item \textbf{Case 1 : }\textit{There exists $x_0>0$ such that $w^\varepsilon(x_0, T) = \psi(x_0, T)$. } It follows by definition of $T$ 
    \[ \partial_t (w^\varepsilon - \psi)(x_0, T) \geq 0, \quad - d_i\partial_{xx} (w^\varepsilon - \psi)(x_0, T) \geq 0, \quad \text{ and } \quad  d_i  \partial_x w^\varepsilon(x_0, T) =   \partial_x \psi(x_0, T). \]
The definition of $w^\varepsilon$ yields that the exponential part is bounded by $1$. From this bound and \eqref{HJ:Hyp:Rdelta}, it follows 
\[B -  A'^2 \leq \left( \partial_t \psi - \varepsilon d_i  \partial_{xx}  \psi -  ( d_i \partial_x \psi)^2 \right) (x_0,  T) \leq  \left( \partial_t w^\varepsilon - \varepsilon  d_i \partial_{xx}  w^\varepsilon -  ( d_i \partial_x w^\varepsilon)^2 \right) (x_0, T) \leq    C_R + C_\delta \]
which is impossible for $B >   A'^2 + C_R  + C_\delta$. (We remark that $x_0>0$ according to the Neumann boundary conditions imposed on $w^\varepsilon$. Moreover, the first inequality above is a strict inequality whenever $d_i = 0$.) 

    \item \textbf{Case 2 : }\textit{There holds $\inf (w^\varepsilon - \psi)(\cdot, T) = 0$ with $w^\varepsilon(\cdot, T) < \psi(\cdot, T)$. } In this case, we introduce $\psi_\gamma := \psi(x,t) - \gamma e^{-\frac{\varepsilon}{t}}$ with $\gamma \in (0, 1]$ and 
    \[ T_\gamma := \inf \left\lbrace t>0: \quad \exists x > 0, \  w^\varepsilon (x,t) > \psi_\gamma(x,t) \right\rbrace. \]
   Since $\psi_\gamma (t= 0^+) = \psi(t= 0)$ and $\psi_\gamma (T) = \psi(T)$, we have $0<T_\gamma <T$. Remark also that $T_{1} \leq  T_\gamma$ for $\gamma < 1$. Moreover, since $\partial_t \psi_\gamma (\cdot, t) =  B -\frac{\gamma e^{-\frac{\varepsilon}{t}}}{t^2} < \partial_t \psi (\cdot, t)$, we conclude as in case 1 that $w^\varepsilon (\cdot, T_\gamma)< \psi_\gamma(\cdot, T_\gamma)$. We claim that $T_\gamma < T$. Indeed, since there exists, by definition of $T$, a sequence $x_n>0$ such that $(w^\varepsilon - \psi)(x_n , T) \to 0$. If $T = T_\gamma$, it would imply for $n \in \mathbb{N} $ large enough that $(w^\varepsilon - \psi)(x_n , T) < \frac{\gamma e^{\frac{\varepsilon}{T}}}{2}$ and a contradiction follows from
   \[ 0 < (w^\varepsilon - \psi_\gamma)(x_n , T) < -\frac{\gamma e^{\frac{\varepsilon}{T}}}{2}.\]
   We deduce the existence of $\tau \in\, ]T_\gamma, T[$ and $x_\tau>0$ such that 
   \[  \psi_\gamma(x_\tau, \tau) < w^\varepsilon(x_\tau, \tau ) . \]
   Finally, we introduce 
   \[\begin{aligned}
       &\psi_{\gamma, \sigma} := \psi_\gamma  + \sigma (x- x_\tau)^2  \\
       \text{and } \quad &  T_\sigma := \inf \left\lbrace t>0: \quad \exists x > 0, \  w^\varepsilon (x,t) > \psi_{\gamma, \tau}(x,t) \right\rbrace.
   \end{aligned} \]
   We underline that $T_1 \leq T_\gamma  \leq  T_\sigma  \leq \tau < T $ since $ \psi_{\gamma, \sigma}(x_\tau,\tau)< w^\varepsilon (x_\tau,\tau) $. \\
   Moreover, for all $x>0$ such that $|x - x_\tau| > \sqrt{\frac{\gamma e^{-\frac{1}{T_\sigma}}}{\sigma}}$, one has that $w^\varepsilon (x, T_\sigma) \leq \psi(x, T_\sigma)\leq \psi_{\gamma, \sigma} (x , T_\sigma)$ since $T_\sigma < T$. We deduce that there exists $x_0 \in B(x_\tau,\sqrt{\frac{\gamma e^{-\frac{1}{T_\sigma}}}{\sigma}}) $ such that 
   \[0 =( w^\varepsilon - \psi_{\gamma, \sigma} )(x_0, T_\sigma) = \max ( w^\varepsilon - \psi_{\gamma, \sigma} ).\]
   As above, we deduce that 
   \[ B -\frac{\gamma e^{-\frac{1}{T_\sigma}}}{T_\sigma^2} - [A' + 2\sigma (x_0 - x_\tau)]^2 - \varepsilon  2 \sigma  \leq C_R + C_\delta.\]
   Next, using the bounds on $|x_0 - x_\tau|$ and $T_\sigma$, we conclude that 
   \[ B -\frac{\gamma}{T_1^2} - [A' + 2\sqrt{\gamma \sigma}]^2 - \varepsilon 2 \sigma \leq B -\frac{\gamma e^{-\frac{1}{T_\sigma}}}{T_\sigma^2} - [A' + 2\sigma (x_0 - x_\tau)]^2 - \varepsilon  2 \sigma \leq C_R + C_\delta   .\]
   Passing to the inferior limits $\sigma \to 0$ and then $\gamma \to 0$, it follows 
   \[ B - A'^2 \leq C_R + C_\delta\]
   which is absurd for $B> C_R + 1 + A'^2$. 
\end{itemize}
It concludes the proof of the upper bound. 

\vspace{0.5cm}

\textbf{The lower bound. } First, we define $\phi(x,t) = - a'x^2-bt  - c$ with $a',b>0$ two free parameters satisfying $a <  a'$. We prove the lower bound for $u^\varepsilon_i$ with $i \in \left\lbrace 1,2 \right\rbrace$. As above, we introduce 
\[ T := \inf \left\lbrace t>0 :  \quad \exists x>0 , \ u_i^\varepsilon(x,t) < \phi(x,t) \right\rbrace.\]
Remarking that $\phi(x, t=0) \leq u^\varepsilon_i(x,t=0)$, we deduce that $T>0$. As for the upper bound, we distinguish the proof into two cases:
\begin{itemize}
    \item \textbf{Case 1 :} \textit{There exists $x_0 >0$ such that $u^\varepsilon_i(x_0, T) = \phi(x_0, T)$. } In this case, we have 
    \[ \partial_t (u_1^\varepsilon - \phi)(x_0, T) \leq 0, \quad - \partial_{xx} (u_1^\varepsilon - \phi)(x_0, T) \leq 0 \quad \text{ and } \quad   \partial_x (u_1^\varepsilon - \phi)(x_0, T) = 0.\]
    We deduce that 
    \[-b -(2a'x_0)^2   \leq  \left(\partial_t \phi - \varepsilon  d_i \partial_{xx} \phi -  d_i | \partial_x \phi|^2 \right)(x_0, T) \\ \geq  \left(\partial_t u_1^\varepsilon - \varepsilon \partial_{xx} u_1^\varepsilon -    |\partial_x u_1^\varepsilon|^2 \right)(x_0, T) \geq - C_R.\]
    It is impossible for $a',b$ large enough (the first above inequality is strict if and only if $d_i = 0$). 
    \item \textbf{Case 2 : }\textit{There holds $u_i^\varepsilon(x, T) > \phi(x, T)$  for all $x>0$. } As for the upper bound, we introduce for $\gamma \in (0, 1]$
    \[ \begin{aligned}
        &\phi_\gamma = \phi + \gamma e^{-\frac{\varepsilon}{t}}\\
        \text{ and } \quad & T_\gamma :=\inf \left\lbrace t>0 :  \quad \exists x>0 , \ u_1^\varepsilon(x,t) < \phi_\gamma(x,t) \right\rbrace.
    \end{aligned}\]
    It is clear that $0 < T_1 \leq T_\gamma < T$. Next, there exists $\tau \in ]T_\gamma, T[$ and $x_\tau > 0$ such that 
    \[ \phi_\gamma(x_\tau, \tau) > u_1^\varepsilon(x_\tau, \tau). \]
    We introduce 
        \[ \begin{aligned}
        &\phi_{\gamma, \sigma} = \phi_\gamma - \sigma (x - x_\tau)^2\\
        \text{ and } \quad & T_\sigma :=\inf \left\lbrace t>0 :  \quad \exists x>0 , \ u_i^\varepsilon(x,t) < \phi_{\gamma, \sigma}(x,t) \right\rbrace.
    \end{aligned}\]
    Moreover, we have $0 < T_1 \leq T_\gamma < T_\sigma < \tau \leq T$. Since for $x \in B^c(x_\tau, \sqrt{\frac{\gamma}{\sigma}})$, we have $\phi_{\gamma, \sigma} (x, T_\sigma)< \phi(x, T_\sigma) < u_i^\varepsilon (x, T_\sigma)  $ (since $T_\sigma < \tau)$, it follows the existence of $x_0 \in B(x_\tau, \sqrt{\frac{\gamma}{\sigma}})$ such that 
    \[ u_i^\varepsilon (x_0, T_\sigma) = \phi_{\gamma, \sigma} (x_0, T_\sigma).\]
    A direct computation implies 
    \[ \begin{aligned}
    -b +\frac{\gamma}{T_1^2} - [2a'x_0 - 2\sqrt{\gamma \sigma}]^2 + \varepsilon2  (\sigma ) &\geq \left(\partial_t \phi - \varepsilon  d_i\partial_{xx} \phi -   d_i[\partial_x \phi]^2 \right) (x_0, T) \\
    &\geq   \left(\partial_t u_i^\varepsilon - \varepsilon d_i \partial_{xx} u_i^\varepsilon - d_i  [\partial_x u_i^\varepsilon]^2 \right) (x_0, T) \\
    &\geq -C_R   .       
    \end{aligned} \]
Taking the inferior limits $\sigma \to 0$ and $\gamma \to 0$ and $b$ large enough, leads to the desired contradiction. 
\end{itemize}
It concludes the proof.
  \flushright\qed
\end{proof}

\subsection{A first weak asymptotic result for $u_i^\varepsilon - u_j^\varepsilon$}

As mentioned in the comment that follows the statement of Theorem\ref{HJ:thm:interm_u}, we only prove a first imprecise (but necessary) result on $u_i^\varepsilon - u_j^\varepsilon$. For this purpose, we introduce 

\begin{definition}
Let $q_i^+$ be defined by
\begin{equation}
    \label{HJ:app:eq:q}
    q_i^+ =\left\lbrace
    \begin{aligned}
    &\frac{  \sqrt{([d_i - d_j ] (\partial_x u_j^\varepsilon)^2 + r_i - r_j)^2 + 4 \delta_i \delta_j}}{2 \delta_j } && \quad \text{ when } d_i < d_j, \\
    &q_i && \quad \text{ when } d_i \geq d_j.
    \end{aligned} 
    \right.
\end{equation}
\end{definition}
We emphasize that 
\begin{equation}
    \label{HJ:App:eq:qandq+}
    q_i(x,t) \leq q_i^+(x,t) \qquad \forall (x,t) \in \mathbb{R}^+ \times \mathbb{R}^+. 
\end{equation}  
This new quantity is introduced in order to prove the following result
\begin{lemma}\label{HJ:App:lemma}
Under the hypothesis \eqref{HJ:Hyp:Rdelta}, we have 
\[ - c (x+1)\leq \ln(q_1^+(x,t)).\]
\end{lemma}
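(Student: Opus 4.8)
The plan is to get a lower bound on $\ln(q_1^+(x,t))$ by analysing the explicit formula for $q_1^+$ in the two cases of its definition, using only the regularity hypothesis \eqref{HJ:Hyp:Rdelta} and crucially \emph{not} any information on $\partial_x u_j^\varepsilon$ (which is why the quantity $q_i^+$ was introduced in the first place: in the case $d_i<d_j$ the troublesome term $[d_i-d_j](\partial_x u_j^\varepsilon)^2$ is negative, and in $q_1^+$ we have simply dropped it from inside the square root, so it can only make the expression smaller).

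First I would treat the case $d_1 \geq d_2$, where $q_1^+ = q_1$ is the positive root of $P_{d_1,d_2}$. Since $\delta_1,\delta_2>0$ are bounded below away from $0$ on compacts but we need a global estimate, I would use the explicit formula
\[ q_1 = \frac{\big([d_1-d_2](\partial_x u_2^\varepsilon)^2 + r_1 - r_2\big) + \sqrt{\big([d_1-d_2](\partial_x u_2^\varepsilon)^2 + r_1 - r_2\big)^2 + 4\delta_1\delta_2}}{2\delta_2}. \]
Writing $S = [d_1-d_2](\partial_x u_2^\varepsilon)^2 + r_1 - r_2 \geq r_1 - r_2 \geq -2C_R$ (here $d_1\geq d_2$ makes the diffusion term nonnegative), one has $S + \sqrt{S^2 + 4\delta_1\delta_2} \geq \sqrt{4\delta_1\delta_2} - |S|_{-} \geq \ldots$; more robustly, since $x\mapsto x+\sqrt{x^2+a}$ is increasing, $q_1 \geq \frac{-2C_R + \sqrt{4C_R^2 + 4\delta_1\delta_2}}{2\delta_2} = \frac{\delta_1}{\delta_2(C_R + \sqrt{C_R^2+\delta_1\delta_2})}$. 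Taking logarithms and using $\delta_2 \leq C_\delta$, $\delta_1\delta_2 \leq C_\delta^2$ and the lower bound $\delta_1(x) \geq c\,e^{-C_\delta x}$ coming from $e^{C_\delta x}\delta_1(x)\to+\infty$ together with continuity and positivity of $\delta_1$ (which forces $e^{C_\delta x}\delta_1(x)$ to be bounded below by a positive constant on all of $\mathbb{R}^+$), one gets $\ln q_1 \geq -c(x+1)$ for a suitable constant. The case $d_1 < d_2$ is handled the same way with $S$ replaced by $0$ inside the root: $q_1^+ = \frac{\sqrt{S^2+4\delta_1\delta_2}}{2\delta_2} \geq \frac{\sqrt{4\delta_1\delta_2}}{2\delta_2} = \sqrt{\delta_1/\delta_2}$, and then $\ln q_1^+ \geq \tfrac12(\ln\delta_1 - \ln\delta_2) \geq -c(x+1)$ by the same exponential lower bound on $\delta_1$ and upper bound on $\delta_2$.

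The main obstacle — and really the only subtle point — is extracting a clean \emph{global} (in $x$) lower bound $\delta_1(x) \geq c\,e^{-C_\delta x}$ from the hypotheses. The assumption $e^{C_\delta x}\delta_1(x)\to+\infty$ gives the behaviour at infinity, and on any compact $[0,M]$ the function $\delta_1$ is continuous and strictly positive hence bounded below; combining these gives a uniform $c>0$ with $\delta_1(x) \geq c\,e^{-C_\delta x}$ for all $x\geq 0$. I would spell this out carefully since it is exactly the place where \eqref{HJ:Hyp:Rdelta} is used in full force. Everything else is monotonicity of $t\mapsto t+\sqrt{t^2+a}$ and elementary bounds; no maximum principle or PDE argument is needed here, which is consistent with the remark in the text that in the special case $d_1=d_2$ the bound on $\ln(q)$ follows "directly".
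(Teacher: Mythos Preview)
Your argument is correct and follows essentially the same route as the paper: bound $q_1^+$ from below by something built out of $\delta_1/\delta_2$, then invoke \eqref{HJ:Hyp:Rdelta} to convert this into a linear-in-$x$ lower bound for $\ln q_1^+$. The paper is terser: it asserts directly that $q_1^+ \geq \sqrt{\delta_1/\delta_2}$ ``in any case'' and concludes in one line.

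One small remark in your favour: the paper's blanket inequality $q_1^+ \geq \sqrt{\delta_1/\delta_2}$ is literally equivalent, in the case $d_1\geq d_2$ (where $q_1^+=q_1$), to $S:=[d_1-d_2](\partial_x u_2^\varepsilon)^2 + r_1 - r_2 \geq 0$, which is not guaranteed by the hypotheses since $r_1-r_2$ may be negative. Your treatment of that case --- using monotonicity of $t\mapsto t+\sqrt{t^2+a}$ together with $S\geq -2C_R$ to get $q_1 \geq \delta_1/(C_R+\sqrt{C_R^2+\delta_1\delta_2}) \geq c\,\delta_1$ --- patches this cleanly and still yields the desired $-c(x+1)$ bound after taking logarithms. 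Your explicit justification of the global lower bound $\delta_1(x)\geq c\,e^{-C_\delta x}$ from continuity, positivity, and $e^{C_\delta x}\delta_1(x)\to+\infty$ is also a point the paper leaves implicit.
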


\begin{proof}
By definition of $q_1^+$, in any case and for all $\varepsilon>0$, we have 
\[\frac{\ln(\delta_1(x))-\ln(\delta_2(x))}{2} \leq \ln\left(\sqrt{\frac{\delta_1(x)}{\delta_2(x)}}\right)  \leq \ln( q_1^+(x,t) )   . \]
Thanks to \eqref{HJ:Hyp:Rdelta}, we have 
\[ -C_\delta x - \ln(C_\delta) \leq \ln( q_1^+(x,t) ). \]
\flushright\qed
\end{proof}
 Notice that when $d_1<d_2$, the conclusion of Lemma \ref{HJ:App:lemma} may be false for $q$ if $|\partial_x u_2^\varepsilon|$ is not locally bounded. With this result, one can state the following lemma:
 
 \begin{lemma}\label{HJ:lemma:weakAsympt}
 Under the hypothesis \eqref{HJ:Hyp:di}, \eqref{HJ:Hyp:Rdelta},  \eqref{HJ:Hyp:R} and \eqref{HJ:Hyp:Init_cond}, we have 
 \[ \varepsilon \left( \ln (q_2^+(x,t)) - \frac{\varepsilon^4 }{t} \right) \leq (u_1^\varepsilon - u_2^\varepsilon)(x,t) \leq \varepsilon \left( \ln (q_1^+(x,t)) + \frac{\varepsilon^4 }{t} \right) . \]
 \end{lemma}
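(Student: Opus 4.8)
The plan is to prove the two inequalities by a maximum-principle (sub/supersolution) argument applied to the difference $u_1^\varepsilon - u_2^\varepsilon$, with the term $\varepsilon^5/t$ acting as a vanishing perturbation that handles the singularity at $t=0$ (where the ratio $n_1^\varepsilon/n_2^\varepsilon$ is not yet controlled by the initial data in a useful way). I will focus on the upper bound $u_1^\varepsilon - u_2^\varepsilon \leq \varepsilon(\ln q_1^+ + \varepsilon^4/t)$; the lower bound is symmetric under exchanging the roles of the indices $1$ and $2$, using $q_2^+$.

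First I would set $\phi^\varepsilon(x,t) = \varepsilon \ln q_1^+(x,t) + \varepsilon^5/t$ and argue by contradiction, introducing $T := \inf\{t>0 : \exists x>0,\ (u_1^\varepsilon - u_2^\varepsilon)(x,t) > \phi^\varepsilon(x,t)\}$. Since $\phi^\varepsilon(x,t) \to +\infty$ as $t\to 0^+$ while $u_1^\varepsilon - u_2^\varepsilon$ is bounded near $t=0$ (by the bounds in point 1 of Theorem \ref{HJ:thm:interm_u}, or directly from \eqref{HJ:Hyp:Init_cond}), we get $T>0$, and if $T<+\infty$ there is a first contact point. To locate it I would again need the standard trick (as in the proof of point 1 above) of perturbing $\phi^\varepsilon$ by $-\gamma e^{-\varepsilon/t}$ and then by a localizing term $+\sigma(x-x_\tau)^2$ to reduce to an interior maximum $x_0>0$ with the usual first- and second-order conditions; the Neumann boundary condition at $x=0$ rules out a boundary contact. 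At such a point, write $w := u_1^\varepsilon - u_2^\varepsilon$ and subtract the two equations of \eqref{HJ:eq:main:HC}: using $\partial_t w \geq \partial_t \phi^\varepsilon$, the sign of the second derivative, and $\partial_x w = \partial_x \phi^\varepsilon$ at $x_0$, I would obtain a differential inequality of the form
\[
\partial_t \phi^\varepsilon \leq (r_1 - r_2) + (d_1(\partial_x u_1^\varepsilon)^2 - d_2(\partial_x u_2^\varepsilon)^2) + \delta_1 e^{-w/\varepsilon} - \delta_2 e^{w/\varepsilon} + (\text{lower order in }\varepsilon),
\]
evaluated at $(x_0,\cdot)$, where $w = \phi^\varepsilon$ there. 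The key algebraic point is then that $e^{w/\varepsilon} \geq q_1^+$ at the contact point, so that $\delta_1 e^{-w/\varepsilon} - \delta_2 e^{w/\varepsilon} \leq \delta_1/q_1^+ - \delta_2 q_1^+$, and by the very definition of $q_1^+$ (the positive root, when $d_1\geq d_2$, of $P_{d_1,d_2}$, or the square-root expression when $d_1<d_2$) this is bounded above by $-([d_1-d_2](\partial_x u_2^\varepsilon)^2 + r_1 - r_2)$ — note crucially this uses $q_1^+ \geq q_1$, which is why the "$+$" version is needed, since the genuine $q_1$ would force an unwanted $(\partial_x u_2^\varepsilon)^2$ term. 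Combining, the fitness and diffusion-gradient terms cancel or have a favorable sign (here one uses $d_1 (\partial_x u_1^\varepsilon)^2 - d_2 (\partial_x u_2^\varepsilon)^2 - [d_1-d_2](\partial_x u_2^\varepsilon)^2 = d_1((\partial_x u_1^\varepsilon)^2 - (\partial_x u_2^\varepsilon)^2)$, and at the contact point $\partial_x u_1^\varepsilon - \partial_x u_2^\varepsilon = \partial_x \phi^\varepsilon$ is small in $\varepsilon$ after the $\sigma,\gamma$ limits), leaving $\partial_t \phi^\varepsilon \leq o(1)$. But $\partial_t \phi^\varepsilon = \varepsilon \partial_t \ln q_1^+ - \varepsilon^5/t^2$, and here Lemma \ref{HJ:App:lemma} and the $W^{2,\infty}$ bounds on $r_i,\delta_i$ give that $|\partial_t \ln q_1^+|$ is controlled, so the dominant negative term $-\varepsilon^5/t^2$ would need to be beaten — actually I would arrange the contradiction the other way: the inequality should read $\partial_t \phi^\varepsilon \geq$ (contact-point RHS), forcing $-\varepsilon^5/T^2 + (\text{bounded}) \geq$ something, which fails for the chosen exponents; this is exactly where the exponent $4$ in $\varepsilon^4/t$ (hence $\varepsilon^5/t^2$ after the $\varepsilon$ factor, versus $\varepsilon$-size error terms) is calibrated.

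The main obstacle I anticipate is the bookkeeping of the gradient terms when $d_1 \neq d_2$: the quantity $d_1(\partial_x u_1^\varepsilon)^2 - d_2(\partial_x u_2^\varepsilon)^2$ is not obviously controlled at this stage (point 2 of Theorem \ref{HJ:thm:interm_u} is not yet available — indeed this lemma is a prerequisite for it), and one must be careful that at the contact point one only controls the \emph{difference} $\partial_x u_1^\varepsilon - \partial_x u_2^\varepsilon$ (which is $\partial_x\phi^\varepsilon + 2\sigma(x_0-x_\tau)$, small), not each gradient separately. The saving grace is precisely the choice of $q_i^+$ over $q_i$: when $d_1<d_2$ one drops the $(\partial_x u_2^\varepsilon)^2$ term from inside the square root, which only increases the bound and thus keeps the sub/supersolution inequality valid, at the cost of a weaker (but sufficient) estimate. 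A secondary technical point is to double-check that $\ln q_1^+(x,t)$ is $C^2$ in $x$ and continuous in $t$ so that the pointwise comparison is legitimate — this follows from $\delta_i > 0$ and \eqref{HJ:Hyp:Rdelta}, with the square root staying bounded away from $0$ thanks to $4\delta_1\delta_2 > 0$. Finally one lets $\sigma \to 0$ then $\gamma \to 0$ to remove the auxiliary perturbations, obtaining the clean contradiction and hence $T = +\infty$, which is the claimed upper bound; the lower bound follows by the symmetric argument with $1 \leftrightarrow 2$.
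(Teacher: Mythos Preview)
Your overall strategy is the same as the paper's: a comparison (first-touching-time) argument on $w=u_1^\varepsilon-u_2^\varepsilon$, using that at a touching point $e^{w/\varepsilon}\geq q_1^+\geq q_1$, so that the right-hand side $e^{-w/\varepsilon}P_{d_1,d_2}(e^{w/\varepsilon})$ of the subtracted equation is $\leq 0$. You also correctly identify why $q_1^+$ (rather than $q_1$) is needed, via the key inequality \eqref{HJ:App:eq:qandq+} together with the lower bound of Lemma~\ref{HJ:App:lemma}.

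Two points, however, are off. First, your proposed \emph{mechanism for the contradiction} is not the paper's and does not work as stated. You say the contradiction comes from the term $-\varepsilon^5/T^2$ being ``dominant'' against $\varepsilon$-size errors, and that ``this is exactly where the exponent $4$ in $\varepsilon^4/t$ \dots\ is calibrated''. This is not so: any positive power of $\varepsilon$ in front of $1/t$ would serve the only purpose that term has here, namely pushing the barrier to $+\infty$ at $t=0^+$ so that $T>0$. The exponent $4$ is calibrated for a \emph{later} use --- the proof of the convergence of $N_\varepsilon$ (point~1 of Theorem~\ref{HJ:thm:main_thm}), where one needs $e^{\pm\varepsilon^4/t}=1+o(\varepsilon^3)$ on $[t_1,T]$. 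In the present lemma the contradiction is obtained, for each \emph{fixed} $\varepsilon$, purely from the sign $e^{-w/\varepsilon}P_{d_1,d_2}(e^{w/\varepsilon})\leq 0$ clashing with the differential inequalities at the contact point; the paper localises with a single penalty $\mu^{-1}(x-x_0)^2$ (rather than your two-step $\gamma,\sigma$ scheme), finds $(x_\mu,\tau_\mu)$, and lets $\mu\to 0$.

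Second, your handling of the gradient terms is more delicate than needed. You worry that at this stage ``one only controls the difference $\partial_x u_1^\varepsilon-\partial_x u_2^\varepsilon$, not each gradient separately'', and try to make $d_1\big((\partial_x u_1^\varepsilon)^2-(\partial_x u_2^\varepsilon)^2\big)$ small in $\varepsilon$. But uniform-in-$\varepsilon$ control is not required here: the lemma is a statement for each fixed $\varepsilon$, and the paper explicitly allows the constant $C$ bounding $|\partial_x(u_1^\varepsilon+u_2^\varepsilon)|$, $|\partial_{xx}u_2^\varepsilon|$, $|\partial_t\ln q_1^+|$, etc.\ at the (convergent) points $(x_\mu,\tau_\mu)$ to depend on $\varepsilon$ (but not on $\mu$). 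With that concession the bookkeeping simplifies dramatically and your concern about ``small in $\varepsilon$ times possibly large'' evaporates.
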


\begin{proof}
Set $(x_0, t_0) \in \mathbb{R}^+\times \mathbb{R}^+$, $\varepsilon >0$ and $\mu>0$. Next, we introduce
\[ \tau_\mu := \inf \left\lbrace t > 0: \quad \exists x>0 \quad\text{ such that } \quad  (u_1^\varepsilon - u_2^\varepsilon )(x,t) - \mu^{-1} (x-x_0)^2 > \varepsilon\left[ \ln (q_1^+(x,t)) + \frac{\varepsilon^4}{t}\right] \right\rbrace.\]
Thanks to 1. of Theorem\ref{HJ:thm:interm_u}, we have $\tau_\mu>0$. Remark also that for all $\mu<1$, we have  $\forall (x,t) \in \mathbb{R}^+ \times [0, \tau_1]$
\[  (u_1^\varepsilon - u_2^\varepsilon )(x,t) - \mu^{-1} (x-x_0)^2  =   (u_1^\varepsilon - u_2^\varepsilon )(x,t) - (x-x_0)^2 + [\mu^{-1}-1] (x-x_0)^2<\varepsilon \left[ \ln q_1(x,t) + \frac{\varepsilon^4}{t} \right] .  \] 
It follows that $\tau_\mu>\tau_1>0$ for all $\mu<1$. Next, we distinguish two cases:
\begin{enumerate}
    \item $\underset{\mu \rightarrow 0}{\limsup}  \ \tau_\mu > t_0$, 
    \item $\underset{\mu \rightarrow 0}{\liminf} \  \tau_\mu \leq t_0$. 
\end{enumerate}
We will only consider the second case, since it is clear that in the first case the conclusion holds true. 

\vspace{0.2cm}

We prove by contradiction that this case can not hold. Let $\mu_n \to 0$ be such that $\tau_{\mu_n}$ converges to $\underset{\mu \rightarrow 0}{\liminf} \tau_\mu$. For sake of readability, we replace $\tau_{\mu_n}$ by $\tau_\mu$. Notice that this limit belongs to $[\tau_1, t_0]$. \\
According to the point 1. of Theorem\ref{HJ:thm:interm_u} and Lemma \ref{HJ:App:lemma}, we have
\[(u_1^\varepsilon - u_2^\varepsilon )(x, \tau_\mu) -\mu^{-1}(x-x_0)^2 - \varepsilon \left[\ln (q_1^+(x,\tau_\mu)) + \frac{\varepsilon^4}{\tau_\mu} \right] \underset{ x \to +\infty}{\longrightarrow} -\infty.\]
It follows the existence of $x_\mu>0$ such that 
\[ \begin{aligned}
0 \  =& \ (u_1^\varepsilon - u_2^\varepsilon )(x_\mu, \tau_\mu)   - \mu^{-1}(x_\mu-x_0)^2- \varepsilon \left[ \ln (q_1^+(x_\mu, \tau_\mu)) + \frac{\varepsilon^4}{\tau_\mu} \right]\\
  =& \ \underset{ x >0}{\max} \  (u_1^\varepsilon - u_2^\varepsilon )(x, \tau_{\mu}) - \mu^{-1} (x-x_0)^2 - \varepsilon \left[ \ln (q_1^+(x,\tau_\mu)) + \frac{\varepsilon^4}{\tau_\mu} \right].
\end{aligned}\]
Moreover, we observe that as $\mu \to 0$, one has $x_\mu \to x_0$. One also has 
\begin{equation}\label{eq:bounds_u_v1}
\begin{aligned}
    &\partial_t (u_1^\varepsilon - u_2^\varepsilon )(x_\mu, \tau_\mu) = -\frac{\varepsilon^4}{\tau_\mu^2} +  \varepsilon \partial_t \left(\ln (q_1^+(x_\mu, \tau_\mu) \right), \\
    &\partial_x(u_1^\varepsilon - u_2^\varepsilon )(x_\mu, \tau_\mu) = 2\mu^{-1} (x_\mu - x_0) +  \varepsilon \partial_x ( \ln(q_1^+(x_\mu, \tau_\mu))  \\
    \text{ and } \qquad -&\partial_{xx} (u_1^\varepsilon - u_2^\varepsilon )(x_\mu, \tau_\mu)  \geq  2\mu^{-1} +   \varepsilon( \partial_{xx}\ln(q_1^+(x_\mu, \tau_\mu)).
    \end{aligned}
\end{equation}
Since $(x_\mu , \tau_\mu)$ converges as $\mu \to 0$ and all the involved functions are continuous, we deduce the existence of $C>0$ (independent of $\mu$ but that may depend on $\varepsilon$) such that for all $\mu>0$ small enough,
\begin{equation} \label{eq:bounds_u_v2}
\begin{aligned}
\max\Big(&| \frac{\varepsilon^4}{\tau_\mu^2}  +   \partial_t \left(\ln (q_1^+(x_\mu, \tau_\mu) \right) |, |  \partial_x ( \ln(q_1^+(x_\mu, \tau_\mu))| ,  \\
& \qquad | \partial_{xx}( \ln(q_1^+(x_\mu, \tau_\mu))| ,|\partial_x (u_1^\varepsilon + u_2^\varepsilon) (x_\mu, t_\mu)|, |\partial_{xx} u_2^\varepsilon (x_\mu, \tau_\mu)|\Big) < C.  
\end{aligned}
\end{equation}
We subtract the equations for $u_1^\varepsilon$ and $u_2^\varepsilon$ and we obtain
\begin{equation} \label{eq:diff_u_v}   
\begin{aligned}
&\partial_t (u_1^\varepsilon - u_2^\varepsilon) - d_1\partial_x(u_1^\varepsilon - u_2^\varepsilon) \partial_x(u_1^\varepsilon + u_2^\varepsilon ) - d_1\varepsilon \partial_{xx} (u_1^\varepsilon - u_2^\varepsilon )  -  [d_1 + d_2] \varepsilon  \partial_{xx} u_2^\varepsilon \\
= \quad& [d_1 - d_2] (\partial_x u_2^\varepsilon)^2 + r_1 - r_2 + \delta_1 e^{\frac{u_2^\varepsilon - u_1^\varepsilon }{\varepsilon} }- \delta_2 e^{\frac{u_1^\varepsilon - u_2^\varepsilon }{\varepsilon} }\\
= \quad& e^{\frac{u_2^\varepsilon - u_1^\varepsilon}{\varepsilon }} P_{d_1,d_2}(e^{\frac{u_1^\varepsilon - u_2^\varepsilon }{\varepsilon}}). \\
\end{aligned}
\end{equation}
Next, we evaluate the above equation at $(x_\mu, \tau_\mu)$. First, since $(u_1^\varepsilon - u_2^\varepsilon )(x_\mu, \tau_\mu) = \varepsilon \ln (q_1^+(x_\mu, \tau_\mu)) + \mu^{-1}(x_\mu-x_0)^2 +\frac{ \varepsilon^2}{\tau_\mu}$, we deduce that 
\[q_1(x_\mu, \tau_\mu) \leq q_1^+(x_\mu, \tau_\mu) < e^{\frac{(u_1^\varepsilon - u_2^\varepsilon )(x_\mu, \tau_\mu)}{\varepsilon}}.\]
It follows
\[  \left( e^{\frac{u_2^\varepsilon - u_1^\varepsilon}{\varepsilon }} P_{d_1,d_2}(e^{\frac{u_1^\varepsilon - u_2^\varepsilon }{\varepsilon}}) \right) (x_\mu, \tau_\mu) \leq  0. \]
We deduce thanks to \eqref{eq:bounds_u_v1}, \eqref{eq:bounds_u_v2} and \eqref{eq:diff_u_v} that there holds for $\mu$ small enough
\[ \begin{aligned}
  0&<\quad C + C[2\mu^{-1} |x_\mu - x_0| + C ] + 2 \varepsilon \mu^{-1} + \varepsilon C (1 + d_1 + d_2)  \\
   &=\quad\left(\partial_t (u_1^\varepsilon - u_2^\varepsilon) - d_1 \partial_x(u_1^\varepsilon - u_2^\varepsilon) \partial_x(u_1^\varepsilon + u_2^\varepsilon ) - \varepsilon d_1 \partial_{xx} (u_1^\varepsilon - u_2^\varepsilon )  \right) (x_\mu , t_\mu)   -  [d_1 + d_2] \varepsilon  \partial_{xx} u_2^\varepsilon(x_\mu, \tau_\mu)\\
   &= \quad \left( e^{\frac{u_2^\varepsilon - u_1^\varepsilon}{\varepsilon }} P_{d_1, d_2}(e^{\frac{u_1^\varepsilon - u_2^\varepsilon }{\varepsilon}}) \right) (x_\mu, \tau_\mu) \\
    &\leq\quad  0.
\end{aligned} \]
We have reached the desired contradiction.

\vspace{0.2cm}

The proof for $q_2^+$ is identical by studying $u_2^\varepsilon - u_1^\varepsilon$. Therefore, we let it to the reader. 

\flushright\qed
\end{proof}

It follows the following corollary
\begin{corollary}\label{HJ:corol1}
 Under the hypothesis \eqref{HJ:Hyp:di}, \eqref{HJ:Hyp:Rdelta}, \eqref{HJ:Hyp:Init_cond} and \eqref{HJ:Hyp:R} we have 
 \begin{equation}
   \frac{n_i^\varepsilon(x,t) }{n_j^\varepsilon(x,t)} =   e^\frac{(u_i^\varepsilon - u_j^\varepsilon)(x,t)}{\varepsilon} \leq C e^{C_\delta x + \frac{\varepsilon^3}{t} } [(\partial_x u_j^\varepsilon (x,t)^2 + 1] .
 \end{equation}
\end{corollary}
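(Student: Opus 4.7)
The corollary is essentially a direct consequence of Lemma \ref{HJ:lemma:weakAsympt} combined with an elementary algebraic bound on $q_i^+$. I would proceed as follows.

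First, dividing the inequality of Lemma \ref{HJ:lemma:weakAsympt} by $\varepsilon$ and exponentiating gives
\[
e^{\frac{(u_i^\varepsilon-u_j^\varepsilon)(x,t)}{\varepsilon}} \ \leq\ q_i^+(x,t)\,\exp\!\left(\frac{\varepsilon^3}{t}\right),
\]
so it remains to show that $q_i^+(x,t) \leq C\,e^{C_\delta x}\,[(\partial_x u_j^\varepsilon(x,t))^2 + 1]$ for some constant $C$ depending only on the data.

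Second, I would estimate $q_i^+$ uniformly in both regimes of definition \eqref{HJ:app:eq:q}. Using $\sqrt{A^2+B^2}\leq |A|+|B|$ with $A=[d_i-d_j](\partial_x u_j^\varepsilon)^2 + r_i-r_j$ and $B = 2\sqrt{\delta_i\delta_j}$, one obtains in both cases
\[
q_i^+(x,t)\ \leq\ \frac{|[d_i-d_j](\partial_x u_j^\varepsilon)^2 + r_i - r_j|}{\delta_j(x)} + \sqrt{\frac{\delta_i(x)}{\delta_j(x)}}.
\]
By hypothesis \eqref{HJ:Hyp:Rdelta}, the functions $r_i$ and $\delta_i$ are uniformly bounded by $C_R$ and $C_\delta$, while the condition $e^{C_\delta x}\delta_j(x)\to+\infty$ together with the continuity and positivity of $\delta_j$ on $[0,+\infty)$ yields a lower bound $\delta_j(x)\geq c\,e^{-C_\delta x}$ for some $c>0$. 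Plugging this in gives
\[
q_i^+(x,t)\ \leq\ \frac{1}{c}\,e^{C_\delta x}\Bigl(|d_i-d_j|(\partial_x u_j^\varepsilon)^2 + 2C_R\Bigr) + \sqrt{\tfrac{C_\delta}{c}}\,e^{C_\delta x/2},
\]
which is bounded by $C\,e^{C_\delta x}[(\partial_x u_j^\varepsilon)^2 + 1]$ after adjusting the constant. Combining with the first step yields the corollary.

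There is no real obstacle here: the statement is a direct translation of Lemma \ref{HJ:lemma:weakAsympt} into an estimate for the ratio $n_i^\varepsilon/n_j^\varepsilon$. The only point requiring care is extracting the $e^{C_\delta x}$ factor from the denominator $\delta_j(x)$, which is done via assumption \eqref{HJ:Hyp:Rdelta}. The factor $[(\partial_x u_j^\varepsilon)^2 + 1]$ arises purely from the numerator of $q_i^+$, and is the reason why the precise asymptotic item 3 of Theorem \ref{HJ:thm:interm_u} must wait until after the space regularity estimate item 2 has been obtained (so that the right-hand side of the corollary is actually locally bounded).
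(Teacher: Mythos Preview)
Your proof is correct and follows the same strategy as the paper: apply Lemma \ref{HJ:lemma:weakAsympt} to reduce to bounding $q_i^+$, then control the denominator $\delta_j$ via \eqref{HJ:Hyp:Rdelta} to extract the factor $e^{C_\delta x}$ and bound the numerator by a multiple of $(\partial_x u_j^\varepsilon)^2+1$. The only cosmetic difference is that you use the elementary inequality $\sqrt{A^2+B^2}\leq |A|+|B|$ to handle both regimes of the definition of $q_i^+$ simultaneously, whereas the paper treats the cases $d_i<d_j$ and $d_i\geq d_j$ separately; your version is slightly cleaner but not substantively different.
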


\begin{proof}
We focus on the case $i=1, \ j=2$, the other case works exactly the same. According to Lemma \ref{HJ:lemma:weakAsympt}, it is sufficient to prove that 
\[ q_1^+ (x,t) \leq C e^{C_\delta x  } [\partial_x u_2^\varepsilon (x,t)^2 + 1] . \]
First, we remark that thanks to \eqref{HJ:Hyp:Rdelta}
\[ \frac{1}{\delta_1(x)} \leq Ce^{C_\delta x}. \]
Next, we treat the numerator of $q_1^+$. When $d_1<d_2$, we have 
\[ \begin{aligned}
 \sqrt{\left((d_1-d_2) (\partial_x u_2^\varepsilon)^2 + (r_1-r_2)\right)^2 + 4 \delta_1 \delta_2 }&\leq  (\partial_x u_2^\varepsilon)^2 \sqrt{\left((d_1-d_2) + \frac{(r_1-r_2)}{(\partial_x u_2^\varepsilon)^2}\right)^2 + \frac{4 \delta_1 \delta_2}{(\partial_x u_2^\varepsilon)^4} } \\
 &\leq  (\partial_x u_2^\varepsilon)^2 \sqrt{\left((d_1 +d_2 + \frac{2 C_R}{(\partial_x u_2^\varepsilon)^2}\right)^2 + \frac{4 C_\delta^2}{(\partial_x u_2^\varepsilon)^4} } \\
 &\leq  C[(\partial_x u_2^\varepsilon)^2+ 1].
\end{aligned}\]
Combining the two above equations the conclusion follows. 

\vspace{0.2cm}

For the case, $d_1\geq d_2$, we simply have thanks to the above computations
\[  q_1^+ (x,t) \leq   \left[(\partial_x u_2)^2 (C + \frac{(d_1 -d_2)}{2}+ 1) + C_R \right] e^{C_\delta x } \leq C e^{C_\delta x} [(\partial_x u_2^\varepsilon (x,t)^2 + 1] .\]
\flushright\qed
\end{proof}

\subsection{Space regularity of $u_i^\varepsilon$}\label{App:secB2}

\begin{proof}[Proof of 2. of Theorem\ref{HJ:thm:interm_u}]
First, we fix an initial time $t_1>0$ and a maximal time $T>0$. Next, we define $u_i^\varepsilon = f(v_i^\varepsilon )$ where $v_i^\varepsilon$ will be chosen later on. A direct computation yields:
\[ \partial_t u_i^\varepsilon  = f'(v_i^\varepsilon ) \partial_t v_i^\varepsilon, \quad  \partial_x u_i^\varepsilon  = f'(v_i^\varepsilon )  \partial_x v_i^\varepsilon \quad \text{ and } \quad \partial_{xx} u_i^\varepsilon = f'(v_i^\varepsilon) \partial_{xx} v_i^\varepsilon + f''(v_i^\varepsilon)[  \partial_x v_i^\varepsilon]^2.\]
Replacing in the $i^{th}$ equation \eqref{HJ:eq:main}, it follows
\begin{equation}\label{HJ:pf_Lip:eq1}  \partial_t v_i^\varepsilon - \varepsilon  d_i \partial_{xx} v_i^\varepsilon - d_i \left( \varepsilon \frac{ f''(v_i^\varepsilon)}{f'(v_i^\varepsilon)} + f'(v_i^\varepsilon) \right) [\partial_x v_i^\varepsilon ]^2= \frac{r_i(x) - N(t) + \delta_i(x) e^{\frac{u_j^\varepsilon - u_i^\varepsilon}{\varepsilon}}}{f'(v_i^\varepsilon)} .\end{equation}
Next, we differentiate \eqref{HJ:pf_Lip:eq1} with respect to $x$ and we multiply by $\partial_x v_i^\varepsilon$ to obtain:
\[\begin{aligned}
&\partial_t ([\partial_x v_i^\varepsilon]^2) -\varepsilon d_i \partial_{xx} ([\partial_x v_i^\varepsilon]^2) + 2\varepsilon d_i (\partial_{xx} v_i^\varepsilon)^2-4 d_i \left( \frac{f''(v_i^\varepsilon)}{f'(v_i^\varepsilon)} + f'(v_i^\varepsilon)  \right) \partial_x{ ([\partial_x v_i^\varepsilon]^2)}  \partial_x v_i^\varepsilon \\
&- 2d_i\left(    \varepsilon \frac{f'''(v_i^\varepsilon)}{f'(v_i^\varepsilon ) }- \varepsilon \frac{f''(v_i^\varepsilon)^2}{f'(v_i^\varepsilon)} + f''(v_i^\varepsilon) \right)  [\partial_x v_i^\varepsilon]^4  =-2\frac{f''(v_i^\varepsilon)}{f'(v_i^\varepsilon)}r_i \partial_x v_i^\varepsilon +2 \frac{\partial_x r_i}{f'(v_i^\varepsilon)} \partial_x v_i^\varepsilon \\
\quad & +2e^{\frac{f(v_j^\varepsilon) - f(v_i^\varepsilon) }{\varepsilon}} \left(\frac{\delta_i' f'(v_i^\varepsilon) - \delta_i f''(v_i^\varepsilon) }{f'(v_i^\varepsilon)^2}  \partial_x v_i^\varepsilon + \delta_i \left[ \frac{\partial_x v_j^\varepsilon f'(v_j^\varepsilon) \partial_x v_i^\varepsilon f'(v_i^\varepsilon) - (\partial_x v_i^\varepsilon f'(v_i^\varepsilon))^2 }{\varepsilon f'(v_i^\varepsilon)^2} \right]\right).
\end{aligned}\]
Next, we assume that $(\partial_x u_i^\varepsilon)^2 = \max ( (\partial_x u_1^\varepsilon)^2, (\partial_x u_2^\varepsilon)^2)$. It follows \[ \frac{\partial_x v_j^\varepsilon f'(v_j^\varepsilon) \partial_x v_i^\varepsilon f'(v_i^\varepsilon) - (\partial_x v_i^\varepsilon f'(v_i^\varepsilon))^2 }{\varepsilon f'(v_i^\varepsilon)^2} \leq 0.\] Next, by defining $f(v) = 2D - v^2$ where $D$ is large enough such that $f(v)>D$ (thanks to point 1. of Theorem\ref{HJ:thm:interm_u}) and dividing by $|\partial_x v_i^\varepsilon|$, we obtain thanks to Corollary \ref{HJ:corol1} 
\[ \begin{aligned}
&\partial_t (|\partial_x v_i^\varepsilon|) -\varepsilon \partial_{xx} (|\partial_x v_i^\varepsilon|)-4\left( \frac{f''(v_i^\varepsilon)}{f'(v_i^\varepsilon)} + f'(v_i^\varepsilon)  \right) \partial_x{ ([\partial_x v_i^\varepsilon]^2)}  \partial_x v_i^\varepsilon +4 [\partial_x v_i^\varepsilon]^3  -4\frac{C_r}{D}\\
 \leq & \ 2e^{\frac{f(v_j^\varepsilon) - f(v_i^\varepsilon) }{\varepsilon}} \left(\frac{\delta_i' f'(v_i^\varepsilon) - \delta_i f''(v_i^\varepsilon) }{f'(v_i^\varepsilon)^2}  \right) \\
  \leq & \ \frac{2Ce^{\frac{\varepsilon^4}{t_1}}(|\partial_x u_i^\varepsilon|^2 + 1)}{\delta_i} \max\left(\frac{\delta_i' f'(v_i^\varepsilon) - \delta_i f''(v_i^\varepsilon) }{f'(v_i^\varepsilon)^2}  , 0\right)  .
\end{aligned} \]
Next, thanks to Corollary \ref{HJ:corol1}, for any $R_0>0$, it follows the existence of a large constant $C(t_1, T)$ (independent of $\varepsilon$), such that for 
\[  \theta(T, R_0 )  = C(t_1, T) e^{C_\delta R_0}  \]
we have
\[ \partial_t (|\partial_x v_i^\varepsilon|) -\varepsilon \partial_{xx} (|\partial_x v_i^\varepsilon|)-4\left( \frac{f''(v_i^\varepsilon)}{f'(v_i^\varepsilon)} + f'(v_i^\varepsilon)  \right) \partial_x{ ([\partial_x v_i^\varepsilon]^2)}  \partial_x v_i^\varepsilon +[|\partial_x v_i^\varepsilon  | - \theta(T,R_0)]^3< 0\]
Following, the Appendix B of \cite{BMP}, the conclusion follows by comparing $|\partial_x v_i^\varepsilon|$ to $\frac{1}{2\sqrt{t}} + \theta(T,R)$. We conclude that
\[  \max(|\partial_x u_1^\varepsilon|, |\partial_x u_2^\varepsilon|)(x,t) \leq  \frac{C(t_1, T) e^{C_\delta R}}{\sqrt{t_1}} \qquad \text{ for } \quad (x,t) \in [0, R] \times [t_1, T]. \]
  \flushright\qed
\end{proof}

\begin{corollary}\label{HJ:Corol2}
Under the hypotheses \eqref{HJ:Hyp:di}, \eqref{HJ:Hyp:Rdelta}, \eqref{HJ:Hyp:R} and \eqref{HJ:Hyp:Init_cond} we have 
\[ \max(|\partial_x u_1^\varepsilon|, |\partial_x u_2^\varepsilon|)(x,t) \leq  C(t_1, T) e^{C_\delta x} \qquad \text{ for } \quad (x,t) \in \mathbb{R}^+ \times [t_1, T] \]
(where $C(t_1, T)$ is a new arbitrary large constants that depends only on $t_1$ and $T$). 
\end{corollary}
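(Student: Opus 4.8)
The plan is to obtain Corollary \ref{HJ:Corol2} directly from the \emph{explicit} form of the local gradient bound established for point 2 of Theorem \ref{HJ:thm:interm_u}, by optimising the radius $R$. Recall that the Bernstein argument carried out above did not merely yield an abstract finite constant $C_{t_1,T,R}$: combined with Corollary \ref{HJ:corol1}, it produced, on each strip $[0,R]\times[t_1,T]$, the differential inequality for $|\partial_x v_i^\varepsilon|$ with $\theta(T,R)=C(t_1,T)e^{C_\delta R}$, where the prefactor $C(t_1,T)$ depends only on $t_1$ and $T$ — not on $R$, and not on $\varepsilon$. Comparing $|\partial_x v_i^\varepsilon|$ with the barrier $\frac{1}{2\sqrt t}+\theta(T,R)$ then gives, as stated there,
\[
\max\big(|\partial_x u_1^\varepsilon|,|\partial_x u_2^\varepsilon|\big)(y,s)\ \leq\ \frac{C(t_1,T)\,e^{C_\delta R}}{\sqrt{t_1}}\qquad\text{for every }(y,s)\in[0,R]\times[t_1,T].
\]

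First I would fix $t_1<T$. Then, for an arbitrary point $(x,t)\in\mathbb{R}^+\times[t_1,T]$, I would apply the displayed inequality with the particular choice $R=\max(x,1)$; since $(x,t)\in[0,R]\times[t_1,T]$, this yields
\[
\max\big(|\partial_x u_1^\varepsilon|,|\partial_x u_2^\varepsilon|\big)(x,t)\ \leq\ \frac{C(t_1,T)}{\sqrt{t_1}}\,e^{C_\delta\max(x,1)}\ \leq\ \frac{e^{C_\delta}\,C(t_1,T)}{\sqrt{t_1}}\,e^{C_\delta x}.
\]
Absorbing the fixed factor $e^{C_\delta}/\sqrt{t_1}$ into the constant (and still denoting it $C(t_1,T)$, the extra dependence on $C_\delta$ being harmless) then gives exactly the claimed bound on all of $\mathbb{R}^+\times[t_1,T]$. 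That is the entire argument.

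I do not expect a genuine obstacle here: the real work was already absorbed into the proof of point 2 of Theorem \ref{HJ:thm:interm_u}, and the only thing to verify carefully is the claim made there, namely that the dependence on $R$ of the Bernstein constant is confined to the single factor $e^{C_\delta R}$. This is indeed so, since that factor originates solely from $1/\delta_i(x)\leq Ce^{C_\delta x}$ together with the estimate of $q_i^+$ in Corollary \ref{HJ:corol1}, the remainder of the computation being uniform in $R$ and $\varepsilon$. Should the change of unknown $u_i^\varepsilon=2D-(v_i^\varepsilon)^2$ introduce a harmless polynomial-in-$R$ prefactor when translating the bound on $|\partial_x v_i^\varepsilon|$ back to $|\partial_x u_i^\varepsilon|$, it is absorbed at no cost by slightly enlarging $C_\delta$ in \eqref{HJ:Hyp:Rdelta}: this is legitimate, because \eqref{HJ:Hyp:Rdelta} only requires $C_\delta$ to be an upper bound for the $W^{2,\infty}$ norms, and the growth condition $e^{C_\delta x}\delta_i(x)\to+\infty$ is preserved when $C_\delta$ is increased.
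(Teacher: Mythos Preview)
Your proposal is correct and is precisely the argument implicit in the paper: the corollary is stated without a separate proof, immediately after the explicit bound $\max(|\partial_x u_1^\varepsilon|,|\partial_x u_2^\varepsilon|)\leq C(t_1,T)e^{C_\delta R}/\sqrt{t_1}$ on $[0,R]\times[t_1,T]$, and your choice $R=\max(x,1)$ is the intended specialisation. Your remark about a possible polynomial-in-$R$ prefactor coming from $|\partial_x u_i^\varepsilon|=2|v_i^\varepsilon|\,|\partial_x v_i^\varepsilon|$ is a fair observation (since $D$ must grow with $R$ via the lower bound of point~1), and your absorption into the exponential is the correct fix.
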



\subsection{Asymptotic of $u_i^\varepsilon - u_j^\varepsilon$}

We only prove the following Lemma

\begin{lemma}\label{HJ:Lemma:q}
 Under the hypotheses \eqref{HJ:Hyp:di}, \eqref{HJ:Hyp:Rdelta}, \eqref{HJ:Hyp:R} and \eqref{HJ:Hyp:Init_cond}, for any time interval $[t_1, T]$, there exists $C(t_1,T)> 0$ such that 
\[ -C(t_1, T)(x+1) \leq \ln (q_i(x, t)) < C(t_1,T)(x+1).\]
\end{lemma}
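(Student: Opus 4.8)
The goal is to bound $\ln(q_i)$ by $\pm C(t_1,T)(x+1)$ on $\mathbb{R}^+ \times [t_1,T]$. Recall from \eqref{HJ:app:eq:q} that
\[
q_i = \frac{ ([d_i - d_j] (\partial_x u_j^\varepsilon)^2 + r_i - r_j) + \sqrt{([d_i - d_j] (\partial_x u_j^\varepsilon)^2 + r_i - r_j)^2 + 4 \delta_i \delta_j}}{2 \delta_j },
\]
and note the elementary factorisation $q_i = \sqrt{\delta_i/\delta_j}$ times the positive root of $Y^2 + bY - 1$ with $b = ([d_i-d_j](\partial_x u_j^\varepsilon)^2 + r_i - r_j)/\sqrt{\delta_i\delta_j}$; equivalently one can just estimate numerator and denominator directly. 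The plan is: (i) for the \emph{lower} bound, observe that $q_i \ge \sqrt{\delta_i(x)/\delta_j(x)}$ always (the numerator dominates $\sqrt{4\delta_i\delta_j}$), so $\ln q_i \ge \tfrac12(\ln\delta_i - \ln\delta_j) \ge -C_\delta x - \ln(C_\delta)$ by \eqref{HJ:Hyp:Rdelta}, exactly as in Lemma~\ref{HJ:App:lemma}; this already gives the left inequality with a constant not even depending on $t_1,T$. (ii) For the \emph{upper} bound, control the numerator: using $|\partial_x u_j^\varepsilon| \le C(t_1,T)e^{C_\delta x}$ from Corollary~\ref{HJ:Corol2} together with $\|r_i\|_\infty \le C_R$, $\|\delta_i\|_\infty \le C_\delta$, the numerator is bounded by $C(t_1,T)e^{2C_\delta x}$, while the denominator $2\delta_j(x)$ is bounded below using $e^{C_\delta x}\delta_j(x) \to +\infty$ (hence $1/\delta_j(x) \le C e^{C_\delta x}$ after possibly enlarging $C$). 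Thus $q_i \le C(t_1,T)e^{3C_\delta x}$, giving $\ln q_i \le C(t_1,T)(x+1)$.

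More precisely, for step (ii) I would split as in Corollary~\ref{HJ:corol1}: when $d_i < d_j$ the leading term $[d_i-d_j](\partial_x u_j^\varepsilon)^2$ is negative, so one factors $(\partial_x u_j^\varepsilon)^2$ out of the square root and bounds $\sqrt{((d_i-d_j) + (r_i-r_j)/(\partial_x u_j^\varepsilon)^2)^2 + 4\delta_i\delta_j/(\partial_x u_j^\varepsilon)^4}$ by a constant, yielding numerator $\le C[(\partial_x u_j^\varepsilon)^2 + 1] \le C(t_1,T)e^{2C_\delta x}$; when $d_i \ge d_j$ the numerator is directly $\le [(\partial_x u_j^\varepsilon)^2(d_i - d_j) + 2C_R] + \sqrt{\cdots} \le C(t_1,T)e^{2C_\delta x}$ by the same computation. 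Either way, combining with $1/\delta_j(x) \le C e^{C_\delta x}$ gives the claimed bound. Since $\ln(C(t_1,T)e^{3C_\delta x}) = \ln C(t_1,T) + 3C_\delta x \le C(t_1,T)(x+1)$ after absorbing constants, the right inequality follows.

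The only genuinely delicate point is that the upper bound must invoke the \emph{space regularity} (Corollary~\ref{HJ:Corol2}), which is why this Lemma is placed after the proof of point~2 of Theorem~\ref{HJ:thm:interm_u} — and that regularity itself was established using only the \emph{weaker} asymptotic of Lemma~\ref{HJ:lemma:weakAsympt} (involving $q_i^+$, for which the lower bound needs no gradient control). So there is no circularity, but one must be careful to use $q_i^+$ versus $q_i$ at the right places: the lower bound on $\ln q_i$ here is clean because $q_i \ge \sqrt{\delta_i/\delta_j}$ needs nothing, and the upper bound is where $d_1 < d_2$ forces the use of the now-available gradient estimate. I do not expect any further obstacle; the remaining work is the routine algebra of bounding the explicit expression \eqref{HJ:app:eq:q}, entirely parallel to Lemma~\ref{HJ:App:lemma} and Corollary~\ref{HJ:corol1}.
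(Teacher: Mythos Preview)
Your upper-bound argument is correct and essentially identical to the paper's: bound the numerator of $q_i$ via Corollary~\ref{HJ:Corol2} (giving $|\partial_x u_j^\varepsilon|\le C(t_1,T)e^{C_\delta x}$) and bound $1/\delta_j$ via \eqref{HJ:Hyp:Rdelta}.

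Your lower-bound argument, however, contains a genuine error. You claim that ``the numerator dominates $\sqrt{4\delta_i\delta_j}$'' and hence $q_i\ge\sqrt{\delta_i/\delta_j}$ \emph{always}. This is false: writing $A=[d_i-d_j](\partial_x u_j^\varepsilon)^2+r_i-r_j$, one has $A+\sqrt{A^2+4\delta_i\delta_j}\ge 2\sqrt{\delta_i\delta_j}$ if and only if $A\ge 0$. When $d_i<d_j$ the term $[d_i-d_j](\partial_x u_j^\varepsilon)^2$ is negative and potentially large, so $A$ can be very negative and then $A+\sqrt{A^2+4\delta_i\delta_j}=4\delta_i\delta_j/(\sqrt{A^2+4\delta_i\delta_j}-A)$ is of order $\delta_i\delta_j/|A|$, much smaller than $2\sqrt{\delta_i\delta_j}$. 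Your own factorisation $q_i=\sqrt{\delta_i/\delta_j}\,Y^+$ with $Y^+$ the positive root of $Y^2-bY-1=0$ shows this: $Y^+\to 0$ as $b\to-\infty$. This is precisely why the paper warns, right after Lemma~\ref{HJ:App:lemma}, that ``when $d_1<d_2$, the conclusion of Lemma~\ref{HJ:App:lemma} may be false for $q$ if $|\partial_x u_2^\varepsilon|$ is not locally bounded.''

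The fix is what the paper does: for $d_i<d_j$ use the conjugate form
\[
q_i=\frac{2\delta_i}{\,[d_j-d_i](\partial_x u_j^\varepsilon)^2+r_j-r_i+\sqrt{A^2+4\delta_i\delta_j}\,},
\]
and bound the denominator from \emph{above} using Corollary~\ref{HJ:Corol2} (exactly the same computation as for the upper bound), together with $\delta_i(x)\ge c\,e^{-C_\delta x}$ from \eqref{HJ:Hyp:Rdelta}. So contrary to your claim that the lower bound ``needs nothing'', it needs the gradient estimate just as much as the upper bound does when $d_i<d_j$; only the case $d_i\ge d_j$ reduces to Lemma~\ref{HJ:App:lemma}.
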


Indeed, it is sufficient to prove this lemma because the proof of the upper bound point 3. of Theorem\ref{HJ:thm:interm_u} is the same than the proof of Lemma \ref{HJ:lemma:weakAsympt} by replacing $q_i^+$ by $q_i$ and by using the lower bound provided by Lemma \ref{HJ:Lemma:q} instead of the estimate provided by Lemma \ref{HJ:App:lemma}. Notice that the proof of the lower bounds follows exactly the same argument than the upper bound except that $P_{d_i,d_j}\left(e^\frac{(u_i-u_j)(x_\mu), \tau_\mu}{\varepsilon}\right)>0$. Therefore, we let the details of the proof for the reader. 

\vspace{0.2cm}

\begin{proof}[Proof of Lemma \ref{HJ:Lemma:q}]
We prove this lemma for $i=1$, the proof works the same with $i=2$. We underline that the constant $C(t_1,T)$ can increase from line to line but does not depend on $x$ or $\varepsilon$. 

\vspace{0.2cm}

$\bullet$\textbf{ The upper bound. } We start from the definition of $q_1$: 
\begin{equation}\label{HJ:eq:pfq0}
\begin{aligned}
\ln( q_1(x,t) ) = &\ln\left( [d_1 -d_2](\partial_x u_2^\varepsilon(x,t))^2 + r_1-r_2 + \sqrt{ ( [d_1 -d_2](\partial_x u_2^\varepsilon(x,t))^2 + r_1-r_2 )^2+ 4\delta_1 \delta_2 } \right) \\
& \qquad- \ln( 2 \delta_2). 
\end{aligned}
\end{equation}
According to Corollary \ref{HJ:Corol2}, we have that for all $t \in [t_1, T]$
\begin{equation*}\label{HJ:eq:pfq1}
\begin{aligned}
    &[d_1 -d_2](\partial_x u_2^\varepsilon(x,t))^2 + r_1-r_2 + \sqrt{ ( [d_1 -d_2](\partial_x u_2^\varepsilon(x,t))^2 + r_1-r_2 )^2+ 4\delta_1 \delta_2 } \\
    \leq \ & (C(t_1, T) e^{C_\delta x})^2 \left( d_1 +d_2 +  \sqrt{ ( [d_1 -d_2](1 + \frac{r_1-r_2 }{ e^{2C_\delta x}})^2+ \frac{4\delta_1 \delta_2}{ e^{4C_\delta x}} } \right) + 2 C_R \\
    \leq \ & C(t_1,T) (e^{2C_\delta x} + 1).
\end{aligned}
\end{equation*}
It follows 
\begin{equation}\label{HJ:eq:pfq1}
\begin{aligned}
    &\ln\left([d_1 -d_2](\partial_x u_2^\varepsilon(x,t))^2 + r_1-r_2 + \sqrt{ ( [d_1 -d_2](\partial_x u_2^\varepsilon(x,t))^2 + r_1-r_2 )^2+ 4\delta_1 \delta_2 } \right)\\
   & \leq  [2 C_\delta x +  C(t_1, T)].
\end{aligned}
\end{equation}
Thanks to \eqref{HJ:Hyp:Rdelta}, we have 
\begin{equation}\label{HJ:eq:pfq2}
     -\ln( 2 \delta_2) \leq C_\delta x + C. 
\end{equation}
Inserting \eqref{HJ:eq:pfq1} and \eqref{HJ:eq:pfq2} into \eqref{HJ:eq:pfq0}, the conclusion follows for the upper bound. 

\vspace{0.2cm}

$\bullet$\textbf{ The lower bound. } If $d_1 \geq d_2$, then the result is exactly the one obtained in Lemma \ref{HJ:App:lemma}. Therefore, we only consider the case $d_1<d_2$, in this case we have 
\begin{align*}
 q_1(x,t)  = &\frac{[d_1 -d_2](\partial_x u_2^\varepsilon(x,t))^2 + r_1-r_2 + \sqrt{ ( [d_1 -d_2](\partial_x u_2^\varepsilon(x,t))^2 + r_1-r_2 )^2+ 4\delta_1 \delta_2 } }{ 2 \delta_2} \\
 = & \frac{ 2 \delta_1}{[d_2 -d_1](\partial_x u_2^\varepsilon(x,t))^2 + r_2-r_1 + \sqrt{ ( [d_1 -d_2](\partial_x u_2^\varepsilon(x,t))^2 + r_1-r_2 )^2+ 4\delta_1 \delta_2 } }.
 \end{align*}
 Next, following similar computations than \eqref{HJ:eq:pfq1} and \eqref{HJ:eq:pfq2} the conclusion follows for the lower bound.
  \flushright\qed
\end{proof}

To finish, we state a Proposition that provides some identity related to $q_i$. The proposition follows from straightforward computations that we omit here. However, the following identities will be very useful in the proof of point 1. of Theorem\ref{HJ:thm:main_thm}. 

\begin{proposition}\label{HJ:propo:identity}
The following identities hold true:
\begin{enumerate}
    \item $ r_j + \delta_j q_i = r_H^D(\partial_x u_j)$ where $r_H^D$ is introduced in \eqref{HJ:def:rh}, 
    \item $q_i^{-1} = \frac{[d_j - d_i] (\partial_x u_j)^2 + r_j - r_i + \sqrt{([d_j - d_i] (\partial_x u_j)^2 + r_j - r_i)^2 + 4\delta_i \delta_j }}{2 \delta_i} $, 
    \item $r_i + \delta_i q_i^{-1} = r_H^D(\partial_x u_j)$.
\end{enumerate}
\end{proposition}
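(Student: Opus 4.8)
\textbf{Proof proposal for Proposition \ref{HJ:propo:identity}.}

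The plan is to verify each of the three identities directly from the explicit formula \eqref{HJ:app:eq:q} for $q_i$, since everything in sight is an algebraic expression in the functions $r_1,r_2,\delta_1,\delta_2$ and the quantity $\partial_x u_j^\varepsilon$. Throughout, write for brevity $\sigma = [d_i - d_j](\partial_x u_j)^2 + r_i - r_j$, so that $q_i = \dfrac{\sigma + \sqrt{\sigma^2 + 4\delta_i\delta_j}}{2\delta_j}$ and note that the discriminant $\Delta := \sigma^2 + 4\delta_i\delta_j$ is symmetric under $i\leftrightarrow j$ together with $\sigma \mapsto -\sigma$ (because $[d_i-d_j](\partial_x u_j)^2 + r_i - r_j$ becomes $[d_j-d_i](\partial_x u_j)^2 + r_j - r_i = -\sigma$). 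This symmetry is the single observation that makes all three identities fall out.

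\textbf{Identity 1.} I would compute
\begin{equation}
r_j + \delta_j q_i = r_j + \frac{\sigma + \sqrt{\Delta}}{2} = \frac{2 r_j + [d_i - d_j](\partial_x u_j)^2 + r_i - r_j + \sqrt{\Delta}}{2} = \frac{r_i + r_j + [d_i - d_j](\partial_x u_j)^2 + \sqrt{\Delta}}{2},
\end{equation}
and then observe that, replacing $(\rho, d_1-d_2, r_1-r_2)$ in \eqref{HJ:def:rh} by $(\partial_x u_j, d_i - d_j, r_i - r_j)$ and using that $([d_i-d_j](\partial_x u_j)^2 + r_i - r_j)^2 = ([d_j-d_i](\partial_x u_j)^2 + r_j - r_i)^2$, the right-hand side is exactly $r_H^D(\partial_x u_j)$. (One must be a little careful that $r_H^D$ as written uses $(d_1-d_2)$ and $(r_1-r_2)$; since only squares of these differences enter, the value is unchanged when $(i,j)=(2,1)$, which is why the statement is symmetric in $i,j$.)

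\textbf{Identity 2.} This is the rationalization $q_i^{-1} = \dfrac{2\delta_j}{\sigma + \sqrt{\Delta}} = \dfrac{2\delta_j(\sqrt{\Delta} - \sigma)}{\Delta - \sigma^2} = \dfrac{2\delta_j(\sqrt{\Delta} - \sigma)}{4\delta_i\delta_j} = \dfrac{\sqrt{\Delta} - \sigma}{2\delta_i}$, and then $-\sigma = [d_j - d_i](\partial_x u_j)^2 + r_j - r_i$ while $\Delta = ([d_j-d_i](\partial_x u_j)^2 + r_j - r_i)^2 + 4\delta_i\delta_j$, giving precisely the claimed formula; one only needs that $\sigma + \sqrt{\Delta} > 0$, which holds because $\delta_i,\delta_j > 0$ by \eqref{HJ:Hyp:Rdelta} so $\sqrt{\Delta} > |\sigma|$. \textbf{Identity 3} is then immediate: $q_i^{-1}$ has exactly the form of $q$ with the roles of $i$ and $j$ swapped (by Identity 2), so applying Identity 1 with $i,j$ interchanged gives $r_i + \delta_i q_i^{-1} = r_H^D(\partial_x u_j)$ — the argument stays $\partial_x u_j$ because $r_H^D$ depends only on the square of $\rho$ times $(d_i - d_j)$, again symmetric.

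There is essentially no obstacle here; the only point requiring a line of care is keeping track of which variables are "swapped" in the definition \eqref{HJ:def:rh} of $r_H^D$ and confirming that the answer is genuinely symmetric in $i \leftrightarrow j$ (so that it is legitimate to write $r_H^D(\partial_x u_j)$ on the right of both Identities 1 and 3 without ambiguity). Since the authors explicitly say "straightforward computations that we omit," I would present the above as a short verification, perhaps displaying only the chain for Identity 1 and the rationalization for Identity 2, and remarking that 3 follows from 1 by the $i\leftrightarrow j$ symmetry already exhibited in 2.
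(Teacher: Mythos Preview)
Your algebraic manipulations are clean and, in particular, your proof of Identity 2 by rationalising the denominator is correct and exactly the right idea. However, the last step in your treatment of Identity 1 does not check out. You correctly arrive at
\[
r_j + \delta_j q_i \;=\; \frac{r_i + r_j + [d_i - d_j](\partial_x u_j)^2 + \sqrt{\Delta}}{2},
\]
but this is \emph{not} equal to $r_H^D(\partial_x u_j)$ as defined in \eqref{HJ:def:rh}: the definition of $r_H^D$ has no $(d_1-d_2)\rho^2$ term outside the square root, only inside it. The discrepancy is precisely $\tfrac12[d_i-d_j](\partial_x u_j)^2$. Your remark that ``only squares of these differences enter'' is true for the radicand $\Delta$ but overlooks the linear $\sigma$ contribution sitting outside the root.

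The same issue propagates to Identity 3: your own computation via Identity 2 gives $r_i + \delta_i q_i^{-1} = \tfrac12\big(r_i+r_j - [d_i-d_j](\partial_x u_j)^2 + \sqrt{\Delta}\big)$, which differs from the value you obtained in Identity 1 by $[d_i-d_j](\partial_x u_j)^2$. Hence the two quantities in Identities 1 and 3 can coincide (and equal $r_H^D(\partial_x u_j)$ as written) only when $d_i = d_j$. The paper omits the computation, so there is nothing to compare against; but rather than asserting the equality, you should flag that the Proposition as stated is literally correct only in the case $d_1=d_2$ (which is indeed the setting of Section~\ref{HJ:sec:LongTermBehavior} where \eqref{HJ:eq:rHrinfty} is verified), and that in the general case the right-hand side should carry the extra term $\pm\tfrac12(d_i-d_j)(\partial_x u_j)^2$.
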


Notice that in the special case $d_i = d_j$, we recover $q_i^{-1} = q_j$. 

\subsection{Time regularity of $u_i^\varepsilon$ }

The local Lipschitz time regularity of $u_i^\varepsilon$ is an exact transposition of the proof of the time regularity of $u_\varepsilon$ in \cite{BMP} in Section 3.5. It is performed with the so-called method of doubling variable and relies mainly on the above bounds about $\partial_x u_i^\varepsilon$. We do not provide this proof and just refer to \cite{BMP} Section 3.5.

\section{The Hamilton Jacobi convergence result}\label{Sec4:HJ}


\begin{proof}[Proof of Theorem\ref{HJ:thm:main_thm}]
We split the proof in several parts:
\begin{enumerate}
    \item The convergence of $N_\varepsilon$,  
    \item The convergence of $u_i^\varepsilon$ to $u$ and the control condition, 
    \item The function $u$ is solution of \eqref{HJ:eq:main_eq}, 
    \item The convergence of $n_\varepsilon$. 
\end{enumerate}

\textbf{$\bullet$ Convergence of $N_\varepsilon.$ } We follow the proofs of Theorem 3.1 of \cite{PB} and Theorem 2.4 of \cite{BMP}. 
First, we sum the two equations and we integrate with respect to $x$, it follows 
\[ N_\varepsilon'(t)  = \frac{1}{\varepsilon } \int_{0}^{+\infty}(1 \ 1 )\textbf{ R}(x, N_\varepsilon(t)) \textbf{n}_\varepsilon(x,t) dx  : = J_\varepsilon(t)     .\]
Notice that for all $t>0$, we have 
\begin{equation}\label{HJ:eq:J}
J_\varepsilon (t) \leq \frac{1}{\varepsilon } C_R C_N. 
\end{equation}
Next, we differentiate $J_\varepsilon$ over time and it follows thanks to \eqref{HJ:Hyp:R}
\[\begin{aligned}
 J_\varepsilon'(t) & = \frac{J_\varepsilon(t) }{\varepsilon } \int_0^{+\infty} (1 \ 1 ) \  \partial_2 \textbf{R}(x, N_\varepsilon) \textbf{n}^\varepsilon(x,t)dx + \frac{1}{\varepsilon^2}  \int_{0}^{+\infty}(1 \ 1 )\textbf{ R}(x, N_\varepsilon(t))^2 \textbf{n}^\varepsilon(x,t) dx \\
 & +  \frac{1}{\varepsilon^2}  \int_{0}^{+\infty}(1 \ 1 )\textbf{ R}(x, N_\varepsilon(t)) \textbf{D} \partial_{xx} \textbf{n}^\varepsilon(x,t) dx \\
  & = - \frac{N_\varepsilon(t) J_\varepsilon(t) }{\varepsilon } + \int_{0}^{+\infty}(1 \ 1 )\textbf{ R}(x, N_\varepsilon(t))^2 \textbf{n}^\varepsilon(x,t) dx+ \frac{1}{\varepsilon^2}  \int_{0}^{+\infty}(1 \ 1 )\textbf{ R}(x, N_\varepsilon(t)) \textbf{D} \partial_{xx} \textbf{n}^\varepsilon(x,t) dx \\
  & \geq -C_N \left(C_R +   \frac{J_\varepsilon}{\varepsilon} \right) + \frac{1}{\varepsilon^2}  \int_{0}^{+\infty}(1 \ 1 )\textbf{ R}(x, N_\varepsilon(t))^2 \textbf{n}^\varepsilon(x,t) dx .
\end{aligned}\]
As mentioned in the introduction, a new technical difficulty arises since we deal with a system: it is not clear that the quantity
\[  (1 \ 1 )\textbf{ R}(\cdot,  N_\varepsilon)^2 \textbf{n}^\varepsilon \geq 0.  \]
Indeed, using mainly Proposition \ref{HJ:propo:identity} on $q_i$, we prove  
\[ \frac{1}{\varepsilon} \int_{0}^{+\infty}(1 \ 1 )\textbf{ R}(x, N_\varepsilon(t))^2 \textbf{n}^\varepsilon(x,t) dx > - C \varepsilon   \]
which is enough to conclude to the convergence of $N_\varepsilon$ (as we will detail later on). To prove such an inequality, we start from
\begin{equation}\label{eq:pf:N1}
    \begin{aligned}
 (1 \ 1 )\textbf{ R}(\cdot,  N_\varepsilon)^2 \textbf{n}^\varepsilon = & n_1^\varepsilon \left[ (r_1 - N_\varepsilon) + \delta_1 \delta_2 + (r_1 + r_2 - 2N_\varepsilon) \delta_2 \right] \\
 & \qquad + n_2^\varepsilon \left[ (r_2 - N_\varepsilon) + \delta_1 \delta_2 + (r_1 + r_2 - 2N_\varepsilon) \delta_1 \right]  \\
 = & (r_1 + \delta_2 - N_\varepsilon) \left[ (r_1 - N_\varepsilon ) n_1^\varepsilon + \delta_1 n_2^\varepsilon \right] \\
 &\qquad + (r_2 + \delta_1 - N_\varepsilon) \left[ (r_2 - N_\varepsilon ) n_2^\varepsilon + \delta_2 n_1^\varepsilon \right].
\end{aligned} 
\end{equation}
Thanks to the point 3. of Theorem\ref{HJ:thm:interm_u} and Proposition \ref{HJ:propo:identity}, we have for $t\geq \varepsilon$
\begin{equation}\label{eq:pf:N2}
\begin{aligned}
  \left[ (r_2 - N_\varepsilon ) n_2^\varepsilon + \delta_2 n_1^\varepsilon \right] &= n_2^\varepsilon [ r_2 - N_\varepsilon + \delta_2 q_1 + o(\varepsilon^3)]\\
  &= n_2^\varepsilon[r_H^D(\partial_x u_2^\varepsilon) - N_\varepsilon + o(\varepsilon^3)].
\end{aligned} 
\end{equation}
With similar computations, we also have 
\begin{equation}\label{eq:pf:N3}
\left[ (r_1 - N_\varepsilon ) n_1^\varepsilon + \delta_1 n_2^\varepsilon \right] = n_1^\varepsilon \left[ r_1 + q_1^{-1} \delta_1 - N_\varepsilon \right] =  n_1^\varepsilon \left[ r_H^D(\partial_x u_2^\varepsilon ) - N_\varepsilon + o(\varepsilon^3) \right]. 
\end{equation}
Inserting \eqref{eq:pf:N2} and \eqref{eq:pf:N3} into \eqref{eq:pf:N1}, it follows
\[ (1 \ 1 )\textbf{ R}(\cdot,  N_\varepsilon)^2 \textbf{n}^\varepsilon  = [r_H^D(\partial_x u_2^\varepsilon ) - N_\varepsilon ] \left[ (r_1 + \delta_2)n_1^\varepsilon + (r_2 + \delta_1) n_2^\varepsilon - N_\varepsilon (n_1^\varepsilon + n_2^\varepsilon)  \right] + o(\varepsilon^3)(n_1^\varepsilon + n_2^\varepsilon). \]
Using again the point 3. of Theorem\ref{HJ:thm:interm_u} and Proposition \ref{HJ:propo:identity}, it follows 
\[ \begin{aligned} 
(r_1 + \delta_2 - N_\varepsilon)n_1^\varepsilon + (r_2 + \delta_1 - N_\varepsilon) n_2^\varepsilon  &=  (r_1 + \delta_2 - N_\varepsilon) (q_1 + o(\varepsilon^3) ) n_2^\varepsilon + (r_2 + \delta_1  - N_\varepsilon) n_2^\varepsilon \\
& = \left[(r_1 + \delta_2 ) q_1 + r_2 +\delta_1 - N_\varepsilon(1 +q_1) + o(\varepsilon^3) \right]n_2^\varepsilon\\
& = n_2^\varepsilon (1+q_1) \left[ \frac{r_2 + q_1 \delta_2 + q_1 (r_1 + q_1^{-1} \delta_1)  }{1 + q_1 }  - N_\varepsilon \right] + o(\varepsilon^3) n_2^\varepsilon \\
& = n_2^\varepsilon (1 + q_1) (r_H^D(\partial_x u_2^\varepsilon) - N_\varepsilon) + o(\varepsilon^3) n_2^\varepsilon. 
\end{aligned}\]
We deduce that 
\[  \begin{aligned}
 \frac{1}{\varepsilon^2} \int_0^{+\infty}(1 \ 1 )\textbf{ R}(x,  N_\varepsilon)^2 \textbf{n}^\varepsilon(x)dx  = & \frac{1}{\varepsilon^2}   \int_{0}^{+\infty} (1 + q(x)) [r_H^D(x, \partial_x u_2^\varepsilon) -  N_\varepsilon]^2 n_2^\varepsilon(x,t) dx  \\
 &\qquad + o(\varepsilon) \int_{0}^{+\infty}(n_1^\varepsilon + 2 n_2^\varepsilon)(x, t) dx \\
 \geq &- 2 C \varepsilon. 
\end{aligned}\]    

We conclude that for all $t > \varepsilon$, we have 
\[ J_\varepsilon' (t) > -C_N\left(\frac{ J_\varepsilon (t)}{\varepsilon} + C_R\right) - 2C \varepsilon .\]
By integrating the above inequality between $t=\varepsilon$ and $t$, we deduce thanks to \eqref{HJ:eq:J}
\[\begin{aligned}
 &  J_\varepsilon (t) > \left[J_\varepsilon(\varepsilon) +  \frac{2C \varepsilon^2 }{C_N}  + C_R \varepsilon \right] e^\frac{-C_N t }{\varepsilon} -\left(\frac{ 2C\varepsilon^2 }{C_N} + C_R \varepsilon \right) e^{C_N} \\
 &\geq \left[\frac{C_R C_N }{\varepsilon} +  \frac{2C \varepsilon^2 }{C_N}  + C_R \varepsilon \right] e^\frac{-C_N t }{\varepsilon} -\left(\frac{ 2C\varepsilon^2 }{C_N} + C_R \varepsilon \right) e^{C_N} \\
 &>-  O_\varepsilon(1).
\end{aligned} \]
Finally, following the Annex B of \cite{BMP}, we fix $\tau>0$ and it follows for $\varepsilon<\tau$
\[ \int_{\tau}^T | N_\varepsilon'(s)| ds = \int_{\tau}^T N_\varepsilon'(s)ds + 2\int_{\tau}^T \max( 0,  N_\varepsilon'(s)) ds \leq C_N - c_N  + 2(T - \tau) O_\varepsilon(1). \]
We conclude thanks to the compact embedding of $W^{1,1}([\tau,T])$ into $L^q([\tau, T])$. Up to a subsequence, $N_\varepsilon$ converges to a function $N$ on every interval of the form $[\tau, T]$ for every $\tau>0$. By a diagonal process, we conclude to the convergence of $N_\varepsilon$ on $]0, +\infty[$. Moreover, it is clear that $N$ is non-decreasing.


\vspace{0.5cm}

\textbf{$\bullet$ Convergence of $u_i^\varepsilon$. } From the points 1, 3 and 4 of Theorem\ref{HJ:thm:interm_u}, we deduce thanks to the Arzela-Ascoli Theorem that $u_i^\varepsilon$ converges uniformly on any set of the form $]0, R[ \times ]\tau,  T[$ with $R, T$ arbitrary large constants and $\tau$ an arbitrary small constant. We deduce that $u_i^\varepsilon$ converges uniformly locally on $[0, +\infty[ \times ]0, +\infty[$. Moreover, thanks to the point 2 of Theorem\ref{HJ:thm:interm_u}, we deduce that \[\underset{ \varepsilon \to 0}{\lim } \ u_1^\varepsilon (x,t)= \underset{ \varepsilon \to 0}{\lim } \ u_2^\varepsilon  (x,t)= u(x,t).\]
Next, we claim that $u(x,t) \leq 0$. We prove it by contradiction: assume that there exists a time $t>0$ and $x\in \mathbb{R}$ such that $u(x,t)>\alpha>0$. We deduce the existence of a sequence $\left((x_k, t_k), \varepsilon_k\right) \to \left((x,t), 0 \right)$ such that $u_i^{\varepsilon_k} (x_k, t_k)>\frac{\alpha}{2}$. Next, according to the point 3 of Theorem\ref{HJ:thm:interm_u}, there exists a radius $r>0$ such that for all $y \in B(x_k,r)$, there holds
\[ u_i^{\varepsilon_k}(y, t_k) > \frac{\alpha}{4}.\]
It follows that for $\varepsilon$ small enough, $N_\varepsilon > C_N$ which is in contradiction with the conclusion of Theorem\ref{HJ:thm:existence}. \\
We finally claim that for all $t>0$, we have $\underset{x \in \mathbb{R}_+ }{\sup}\  u(x,t) = 0$. Assume that the conclusion does not hold true. It follows the existence of a time $t>0$ such that $u(x,t) < -\alpha < 0$. We deduce that for $\varepsilon$ small enough, we have 
\[ u_i^\varepsilon (x, t) \leq \frac{-\alpha}{2} \qquad \forall x > 0.\]
We conclude that for $\varepsilon$ small enough, $N_\varepsilon < c_N$ which is in contradiction with the conclusion of Theorem\ref{HJ:thm:existence}. 

\vspace{0.3cm}

\textbf{$\bullet$ The function $u$ is solution of \eqref{HJ:eq:main_eq}. } We first prove that $u$ is is a super-solution in a viscosity sense of $\partial_t u - \mathcal{H}_D(\partial_x u, N) = 0$. We proceed as it was introduced in the article \cite{BMP}. Let $(x_0, t_0) \in \mathbb{R}^+ \times \mathbb{R}^+$ and $\phi$ be a regular test function such that 
\[ \min (u - \phi ) = (u - \phi)(x_0, t_0). \]
Then, we notice that 
\[u(x,t) =\underset{\varepsilon \to 0}{\lim} \  u_1^\varepsilon(y,s) - \varepsilon \ln\left(\psi_1^{\rho_0}\right)) = \underset{\varepsilon \to 0}{\lim} \  u_2^\varepsilon(y,s) - \varepsilon \ln\left(\psi_2\right) \]
where 
\[ \rho_0 =  \partial_x \phi (x_0, t_0) \quad \text{ and } \quad \psi^{\rho_0} = \begin{pmatrix} \psi_1^{\rho_0}  \\ \psi_2^{\rho_0}  \end{pmatrix} =   \begin{pmatrix}1 \\ \frac{ (d_1 - d_2) {\rho_0}^2 +  (r_1 - r_2)  +  \sqrt{(d_1 - d_2) {\rho_0}^2 + (r_1 - r_2)^2 + 4 \delta_1 \delta_2 }}{2\delta_2} \end{pmatrix}. \]
The function $\psi^{\rho_0}$, introduced in \eqref{HJ:def:psi}, is a positive eigenvector of ${\rho_0}^2 \textbf{D} + \textbf{R}$ associated to the eigenvalue $\mathcal{H}_D(\partial_x \phi (x_0, t_0), N_\varepsilon)$. We deduce 
\begin{equation}\label{HJ:eq:pfHJ1}
\begin{aligned}
 &\exists \varepsilon_k \underset{ k \to +\infty}{\longrightarrow} 0, \quad \varepsilon_k>0, \quad   \exists (x_k, t_k) \in \mathbb{R}^+ \times \mathbb{R}^+\qquad \text{ such that } (x_k ,t_k) \to (x_0, t_0) \\
 & (u_i^{\varepsilon_k}- \varepsilon_k \ln (\psi_i^{\rho_0}) -\phi)(x_k, t_k) = \min\left[(u_1^{\varepsilon_k}-  \phi) (x_k, t_k) , ( u_2^{\varepsilon_k} - \varepsilon_k \ln (\psi_2 ^{\rho_0}) - \phi)(x_k, t_k) \right]  , \\
 &\text{and }  \ [u_i^{\varepsilon_k} - \varepsilon_k \ln (\psi_i ^{\rho_0}) - \phi](x_k,t_k)  = \underset{\mathbb{R}^+ \times \mathbb{R}^+ }{\min}  [u_i^{\varepsilon_k} - \varepsilon_k \ln (\psi_i ^{\rho_0}) - \phi].
\end{aligned}
\end{equation}
As we have denoted $ \rho_0 =  \partial_x \phi (x_0, t_0) $, we will denote  $\rho_k =  \partial_x \phi (x_k, t_k) $. Notice that $\rho_k \to \rho_0$. Since it is a minimum point, it follows 
\[\begin{aligned}
&\partial_t  [u_i^{\varepsilon_k} - \varepsilon_k \ln (\psi_i ^{\rho_0}) - \phi](x_k,t_k)  = 0, \quad  \partial_x [u_i^{\varepsilon_k} - \varepsilon_k \ln (\psi_i ^{\rho_0} ) - \phi](x_k,t_k) =0 \quad\\
\text{ and } \quad -&\partial_{xx} [u_i^{\varepsilon_k} - \varepsilon_k  \ln (\psi_i ^{\rho_0}) - \phi](x_k,t_k) \leq 0.\end{aligned} \]
Using the equation \eqref{HJ:eq:main:HC}, we deduce that 
\[ 
0 \leq \left(  \partial_t \phi + \varepsilon_k d_i \partial_{xx}( \phi + \varepsilon_k \ln(\psi_i ^{\rho_0} )) - d_i [\partial_x \phi + \varepsilon_k \partial_{x}\ln(\psi_i ^{\rho_0} )]^2   - r_i +N_\varepsilon(t) -\delta_i e^{\frac{u_j^{\varepsilon_k} - u_i^{\varepsilon_k}}{\varepsilon_k}} \right) (x_k,t_k) .
\]
Moreover, according \eqref{HJ:eq:pfHJ1},  we have 
\[ (u_i^\varepsilon  - u_j^\varepsilon )(x_k, t_k ) \leq  \varepsilon_k \left[\ln(\psi_i^{\rho_0}(x_k) ) - \ln( \psi_j ^{\rho_0}(x_k) )\right].\]
It follows 
\[\begin{aligned} 0 \leq &\left( \partial_t \phi + \varepsilon_k d_i \partial_{xx}(\phi+\varepsilon_k \ln(\psi_i ^{\rho_0})) - d_i (\partial_x \phi   + \varepsilon_k  \partial_x \ln(\psi_i ^{\rho_0}))^2  - r_i+N_\varepsilon(t) - \frac{\delta_i \psi_j ^{\rho_0}}{\psi_i ^{\rho_0}} \right) (x_k,t_k) \\ 
= & \partial_t \phi(x_k, t_k) - \left( d_i (\partial_x \phi)^2 -r_i + N_\varepsilon - \frac{\delta_i \psi_j^{\rho_k}}{\psi_i^{\rho_k}}   \right) (x_k,t_k) \\
+&  d_i\left( \varepsilon_k \partial_{xx} (\phi+\varepsilon_k \ln(\psi_i ^{\rho_0})) - 2\varepsilon_k \partial_x \phi \partial_x \ln(\psi_i ^{\rho_0}) - \varepsilon_k^2  ( \partial_{x} \ln(\psi_i ^{\rho_0}))^2 \right) (x_k, t_k) - \delta_i(x_k) \left(\frac{ \psi_j^{\rho_k}}{\psi_i^{\rho_k}} -\frac{ \psi_j^{\rho_0}}{\psi_i^{\rho_0}} \right) \\
& =  \partial_t \phi(x_k, t_k) - \left( d_i (\partial_x \phi)^2  + r_i - N_\varepsilon + \frac{\delta_i \psi_j^{\rho_k}}{\psi_i^{\rho_k}}   \right) (x_k,t_k) + o_{\varepsilon_k}(1).
\end{aligned}\]
Moreover, recalling that $\mathcal{H}_D(\partial_x \phi(x_k, t_k), N_\varepsilon)$ is an eigenvalue of $\partial_x \phi^2(x_k, t_k) \textbf{D} + \textbf{R}$, it follows
\[\begin{aligned}
 & (d_i[\partial_x \phi]^2 + r_i - N_{\varepsilon_k}) \psi_i^{\rho_k} +  \left(\delta_i \psi_j^{\rho_k} \right)(x_k, t_k) = \left(\mathcal{H}_D(\partial_x \phi, N_\varepsilon ) \psi_i^{\rho_k} \right)(x_k, t_k) \\
 \Rightarrow \quad &\left( d_i (\partial_x \phi)^2 -r_i + N_\varepsilon - \frac{\delta_i \psi_j^{\rho_k}}{\psi_i^{\rho_k}}   \right)(x_k, t_k)  = \mathcal{H}_D(\partial_x \phi, N_\varepsilon) (x_k, t_k).
\end{aligned}\]
We deduce that 
\[ 0 \leq \partial_t \phi (x_k, t_k)  - \mathcal{H}_D(\partial_x \phi, N_{\varepsilon_k} )(x_k , t_k) +o_{\varepsilon_k}(1) .\]
Taking the limit $k \to +\infty$, we conclude that $u$ is a super-solution of $\partial_t u - \mathcal{H}_D(\partial_x u , N) = 0$ in a viscosity sense.

\vspace{0.5cm}

It remains to prove the limit conditions: we verify that $u$ satisfies in a viscosity sense $-\partial_x u (x=0, t) = 0$. Let $\phi$ be such that $u-\phi$ takes its minimum at $x=0$ and for some positive time $t$. We deduce the existence of $(x_\varepsilon, t_\varepsilon)$ such that 
\[\begin{aligned}
    &x_\varepsilon \to 0, \quad t_\varepsilon \to t \text{ as } \varepsilon \to 0,  \\
&[u_i^{\varepsilon}- \varepsilon \ln (\psi_i [\partial_x \phi (0, t)])  - \phi](x_\varepsilon, t_\varepsilon )= \min\left((u_1^{\varepsilon} - \phi) (x_\varepsilon, t_\varepsilon),  (u_2^{\varepsilon} - \varepsilon \ln (\psi_2[\partial_x \phi (0, t)])) (x_\varepsilon, t_\varepsilon) \right)\\
    \text{ and } \quad & [u_i^{\varepsilon}- \varepsilon \ln (\psi_i[\partial_x \phi (0, t)])  - \phi](x_\varepsilon, t_\varepsilon ) = \underset{(x,t) \in \mathbb{R}^+ \times \mathbb{R}^+ }{\min } [u_i^{\varepsilon}- \varepsilon \ln (\psi_i[\partial_x \phi (0, t)])  - \phi](x,t).
\end{aligned}   \]
We distinguish two cases: 
\begin{enumerate}
    \item \textit{Case 1 : $x_\varepsilon >0$.} In this case, we conclude exactly as above that 
    \[ 0 \leq \partial_t \phi (x_\varepsilon, t_\varepsilon)  - \mathcal{H}_D(\partial_x \phi(x_\varepsilon, t_\varepsilon), N_\varepsilon ) +o_{\varepsilon}(1) .\]
    \item \textit{Case 2 : $x_\varepsilon = 0$.} In this case, using the fact that $(x_\varepsilon, t_\varepsilon)$ is a minimum point, we deduce that 
    \[ -\partial_x [u_i^{\varepsilon}- \varepsilon \ln (\psi_i[\partial_x \phi (0, t)])  - \phi](0, t_\varepsilon )\leq 0.\]
    Next, according to the Neumann boundary conditions imposed to $u_i^\varepsilon$, we deduce that 
    \[ \varepsilon \partial_x( \ln (\psi_i [\partial_x \phi (0, t)]) ) (0) \leq -\partial_x \phi(0, t_\varepsilon) . \]
\end{enumerate}
Passing to the superior limit $\varepsilon \to 0$
\[ 0 \leq  \max\left( -\partial_x \phi(0, t) , \phi (0, t)  - \mathcal{H}_D(\partial_x \phi(0, t) , N_\varepsilon (t))\right)\]
which corresponds to the boundary conditions in a viscosity sense. 

\vspace{0.5cm}

The proof that $u$ is a sub-solution of \eqref{HJ:eq:main_eq} follows from the same arguments.

\vspace{0.5cm}

\textbf{$\bullet$ Convergence of $n_i^\varepsilon$ in the sense of measures. } The proof that $n_i^\varepsilon$ converges to a measure follows from the convergence of $u_i^\varepsilon$ towards $u$. Indeed, fix times $0 < t_1< T$; then, according to point 1. of Theorem\ref{HJ:thm:interm_u}, there exists $R_T>0$ such that for any $x>R_T$, $t\in (t_1, T)$ and $\varepsilon$ small enough, we have 
\[ u_i^\varepsilon (x,t) \leq C(1 -  x) \quad \text{ and } \quad   \int_{\left\lbrace x > R_T \right\rbrace } n_1^\varepsilon(x,t) + n^\varepsilon_2(x,t) dx \leq \frac{c_N}{2}. \]
Hence, we deduce that $n_i^\varepsilon \to 0$ on $\left\lbrace x > R_T \right\rbrace$. It follows 
\[ \begin{aligned}
 \frac{ c_N }{2\left(1 + \underset{ x \in [0, R_T]}{\min} q_1(x)e^{\frac{\varepsilon^3}{t_1}} \right) }&\leq  \frac{ 1 }{\left(1 + \underset{ x \in [0, R_T]}{\min} q_1(x)e^{\frac{\varepsilon^3}{t_1}} \right) }  \int_{\left\lbrace x < R_T \right\rbrace } (n_1^\varepsilon (x,t) + n_2^\varepsilon (x,t)) dx  \\
 &\leq   \frac{ 1 }{\left(1 + \underset{ x \in [0, R_T]}{\min} q_1(x)e^{\frac{\varepsilon^3}{t_1}}\right) } \int_{\left\lbrace x < R_T \right\rbrace } n_1^\varepsilon (x, t) (1 + q_1(x) e^{\frac{\varepsilon^3}{t}}) dx  \\
 &\leq   \int_{\left\lbrace x < R_T \right\rbrace } n_1^\varepsilon (x, t) dx  \leq C_N.
\end{aligned}  \]
Since the same type of inequalities is valid for $n_2^\varepsilon$, we deduce that up to an extraction we have $(n_1^\varepsilon, n_2^\varepsilon) \to (n_1, n_2)$, where $n_1$ and $n_2$ are two non-trivial measures. Next, we prove that 
\[ \mathrm{supp} \  n_i(\cdot, t) \subset \left\lbrace u(\cdot, t) = 0 \right\rbrace. \]
Let a time $t>0$ and $\phi$ be a positive regular compactly supported test function, such that 
\[ \mathrm{supp} \  \phi \subset \left\lbrace u(\cdot, t) = 0 \right\rbrace^c.\]
We deduce that there exists $a>0$ such that 
\[ \underset{ x \in \mathrm{supp} \ \phi}{\max} u( x , t ) < -a. \]
Hence, for $\varepsilon$ small enough, we have $\underset{ x \in \mathrm{supp} \ \phi}{\max} u_i^\varepsilon( x , t ) < -\frac{a}{2}$. The conclusion follows the following computation:
\[ \int_{\mathbb{R}^+} \phi(x) n (x, t) dx = \underset{\varepsilon \to 0}{\lim} \int_{\mathbb{R}^+} \phi(x) n_i^\varepsilon (x, t) dx  \leq  \underset{\varepsilon \to 0}{\lim} \int_{\mathbb{R}^+} \phi(x) e^{-\frac{a}{2\varepsilon}} dx  = 0. \]
\flushright\qed
\end{proof}

\section{An application: optimal timing in the adaptation to DNA damage}\label{sec5:Motiv}

\subsection{A general non-local system modelling adaptation to DNA damage}
When eukaryotic cells face damage to their DNA, specialised mechanisms come into play. The DNA damage checkpoint signalling pathway leads to stopping the cell cycle at the G2/M phase. Then appropriate repair pathways are activated. These mechanisms are called the DNA damage response.

However, if the repair pathways fail for too long, then the cells will override the DNA damage checkpoint and resume cell division even though the damage is still present \cite{TGH,LMHUKH}; they will do so at a variable timing, ranging from 5 to 15 hours in the budding yeast model organism \cite{SZ}. This phenomenon is called adaptation to DNA damage. Due to improper chromosome segregation \cite{KMCVHT}, adapted cells have chromosomal instability and a high mortality rate, making adaptation a last resort mechanism after all repair options have already failed. This leads to a hierarchy of cell fate decisions: repair is attempted first and then the cells adapt.

Although the underlying molecular mechanisms of adaptation are not yet completely understood, the consequences of the timing and heterogeneity of adaptation in the survival of the population were investigated through a mathematical model and numerical simulations in \cite{RSX}. The authors propose to model the population by a system of ordinary differential equations:
\begin{equation*}
	\dfrac{d }{d t} \left(    \begin{array}{c}  D(t)\\ A(t)\\ R(t)   \end{array}      \right) =  \left(    \begin{array}{ccc}  -\gamma_d - \beta(x,p,t) -\alpha(t) & 0 & 0 \\ \beta(x,p,t) & -\gamma_{a} - \delta & 0 \\ \alpha(t) & \delta & 0  \end{array}      \right)\left(    \begin{array}{c}  D(t)\\ A(t)\\ R(t)   \end{array}      \right) +\  \left(    \begin{array}{c}  0\\ A(t)(1-\frac{N(t)}{N_{max}})\\ R(t)(1-\frac{N(t)}{N_{max}})   \end{array}      \right),
\end{equation*}
where, at time $t$, $D(t)$ is the quantity of damaged cells, $A(t)$ the quantity of adapted cells and $R(t)$ the quantity of healthy cells whose DNA is repaired. Last, we denote
\[N(t) = A(t)+R(t)+D(t)\]
the total population.

The initial population is composed of a quantity $D(0)$ of damaged cells. They repair their damaged DNA with a rate 
\begin{equation}
	\alpha(t) =  \alpha_m \e^{-\frac{(t-\mu_a)^2}{2\sigma}},
\end{equation}
with  $\alpha_m,\sigma,\mu_a\in\R_+^*$, and they adapt with a rate
\begin{equation}
	\beta( x, p , t) = \dfrac{\beta_m}{1+ \e^{-p(t- x )}},
\end{equation}
with $\beta_m,p\in\R_+^*$. The adapted cells have access to other repair mechanisms at later stages of the cell cycle and we assume that they manage to repair their DNA damage at rate $\delta\in\R_+^*$ after adaptation. The values $\gamma_d$ and $\gamma_a$ are the death rates of damaged and adapted cells respectively; the death rate of healthy cells is assumed to be $0$ for the sake of clearness.

Depending on the value of $x\in\R_+$, which represents the timing of the adaptation process, the population will take a certain time $T_S(x)$ to reach some arbitrary level near the carrying capacity $N_{max}$ of the system. The authors of \cite{RSX} observe that there exists for most parameters an optimal value $x^*$ which minimises $T_S$, thus allowing the population to grow back to a healthy size as fast as possible after an external event has damaged the DNA of all cells. The authors also investigate the dependency with respect to the parameter $p$ which represents the heterogeneity of the adaptation timing and they provide arguments for the hypothesis that an optimal value for $p$ can be selected by a bet-hedging mechanism.\\

Here we go further into investigating the selection of an optimal adaptation timing $x^*$. Instead of studying for each value $x$ of the genetic trait the evolution of the population, we consider a population of cells with varying genetic trait competing for the same resources. We also add genetic diffusion  for healthy and adapted cells and a continuous source of damage $\mathcal D(t)$.

Let $n( x ,t)$ represent at time $t$ the density of healthy cells with genetic trait $ x $; let $d( x ,s,t)$ represent at time $t$ the density of cells with genetic trait $ x $ whose DNA is damaged since a time $s$; let $a( x ,t)$ represent the density of adapted cells at time $t$ with genetic trait $ x $. We also introduce a scaling parameter $\varepsilon\in\R_+^*$.

The repair probability can now take into account both an absolute time part and a "time since the damage occured" part:
\begin{equation}
	\alpha(s,t) =  \bar \alpha(t) \e^{-\frac{(s-\mu_a)^2}{2\sigma}},
\end{equation}
where the function $\bar\alpha:\R_+\to [0,\alpha_m]$ allows us to take into account environmental events that prevent cells from repairing their DNA damage. The adaptation probability can depend upon $x$ or $p$, which we will denote for clarity
\begin{equation}
	\beta( x, s) = \dfrac{\beta_m}{1+ \e^{-p(s- x )}}, \qquad or \qquad \beta( p , s) = \dfrac{\beta_m}{1+ \e^{-p(s- x )}},
\end{equation}
to indicate if cells vary along genetic trait $x$ or $p$ in the model.

The model writes
\begin{equation}\label{eqn:horrible_1}
	\varepsilon\dfrac{\partial n}{\partial t}( x ,t) - \varepsilon^2d_1\dfrac{\partial^2 n}{{\partial  x }^2} ( x ,t) = n( x ,t)\big( 1 - \mathcal D(t) - N(t)  \big)    +\delta a( x ,t) + \int_{0}^{+\infty} \alpha(s)d( x ,s,t) \diff s,
\end{equation}
\begin{equation}\label{eqn:horrible_2}
	\varepsilon\dfrac{\partial d}{\partial t}( x , s, t) + \dfrac{\partial d}{\partial s}( x , s, t) + \big(\gamma_d + \alpha(s,t) + \beta( x ,s) \big)d( x ,s,t) = 0,
\end{equation}
\begin{equation}\label{eqn:horrible_3}
	\varepsilon\dfrac{\partial a}{\partial t}( x ,t) - \varepsilon^2d_2\dfrac{\partial^2 a}{{\partial  x }^2}( x ,t)=  a( x ,t)(1-\gamma_a - \delta - N(t) )   + \int_0^{+\infty}\beta( x ,s) d( x ,s,t) \diff s,
\end{equation}
\begin{equation}\label{eqn:horrible_4}
	N(t) = \int_{0}^{+\infty} \left( n( x ,t) + \int_0^{+\infty} d( x ,s,t)\diff s  + a( x ,t)\right)  \diff  x ,
\end{equation}
with the following initial and boundary conditions
\begin{equation}
	\left\{ \begin{array}{l}
	\displaystyle \dfrac{\partial n}{\partial x}(0,t)= \dfrac{\partial a}{\partial x}(0,t) = 0,\qquad t\in\R_+, \\
	\displaystyle d( x ,0,t) = \mathcal D(t) n( x ,t),\qquad  x ,t\in\R_+,\\
	\displaystyle n( x ,0)=n^0( x ), \quad d( x ,s,0)= d^0( x ,s), \quad a( x ,0) = a^0( x ), \qquad  x \in\R_+,
	\end{array}\right.
\end{equation}
and the constants $d_1, d_2, \delta, \gamma_d, \gamma_a \in\R_+^*$.\\

\subsection{Simplification into a two populations system}\label{sec:3.2}

This system of non-local partial differential equations is complicated and very hard to tackle numerically. Hence, we simplify the dynamics of the damaged cells by making the quasi-static approximation
\[
\partial_s d(x,s,t)  +(\gamma_d+ \alpha(s,t) +\beta(x,s)) d(x,s,t) =0.
\]
Then, we can compute the quantity of damaged cells explicitely:
\[
d(x,s,t) = \mathcal D(t) n_\varepsilon(x,t) e^{- \gamma_d s - \int_0^s \alpha(z,t)dz - \int_0^s \beta(x,z)dz}. 
\]
We also make the simplifying assumption that the damage rate is constant, \textit{i.e.} $\mathcal D(t) = D > 0$, and the new total mass is given by
\[N_\varepsilon(t) = \int_0^{+\infty} (n_\varepsilon(x,t)+a_\varepsilon(x,t)) dx.\]
Hence, we come to the simplified model
\begin{equation}\label{eqn:reduced_model}
\left\{
\begin{array}{rcl}
     \varepsilon \partial_t n_\varepsilon(x,t) - \varepsilon^2 d_1 \Delta  n_\varepsilon(x,t) & = &\displaystyle n_\varepsilon(x,t) (1-D-N_\varepsilon(t)) + \delta a(x,t)\\ &&\displaystyle \qquad  +Dn_\varepsilon(x,t)\int_0^\infty \alpha(s,t) e^{- \gamma_d s - \int_0^s \alpha(z,t)dz - \int_0^s \beta(x,z)dz}ds,\\
     \varepsilon \partial_t a_\varepsilon(x,t) - \varepsilon^2  d_2 \Delta a_\varepsilon(x,t) & = &\displaystyle  a_\varepsilon(x,t) (1-\gamma_a- \delta -N_\varepsilon(t))\\&&\displaystyle \qquad +Dn_\varepsilon(x,t)\int_0^\infty \beta(x,s) e^{- \gamma_d s - \int_0^s \alpha(z,t)dz - \int_0^s \beta(x,z)dz}ds,\\
     N_\varepsilon(t) & = &\displaystyle \int_0^{+\infty} (n_\varepsilon(x,t)+a_\varepsilon(x,t)) dx,
\end{array}
\right.
\end{equation}
with the following initial and boundary conditions
\begin{equation}
	\left\{ \begin{array}{l}
	\displaystyle \dfrac{\partial n_\varepsilon}{\partial x}(0,t)= \dfrac{\partial a_\varepsilon}{\partial x}(0,t) = 0,\qquad t\in\R_+, \\
	\displaystyle n_\varepsilon( x ,0)=n^0( x ), \quad a_\varepsilon( x ,0) = a^0( x ), \qquad  x \in\R_+.
	\end{array}\right.
\end{equation}
If we choose $\bar \alpha(t) = \alpha_m$, \textit{i.e.} $\alpha(s,t) = \alpha(s)$, and if we denote
\begin{multline*}
    r_1(x) = 1-D + D\int_0^\infty \alpha(s) e^{- \gamma_d s - \int_0^s \alpha(z)dz - \int_0^s \beta(x,z)dz}ds, \qquad r_2(x) = 1-\gamma_a- \delta,\\  \delta_1(x) = \delta \qquad \text{ and } \qquad \delta_2(x) =  D \int_0^\infty \beta(s) e^{- \gamma_d s - \int_0^s \alpha(z)dz - \int_0^s \beta(x,z)dz} d s,
\end{multline*}
the sytem \eqref{eqn:reduced_model} is of the form \eqref{HJ:eq:main}.

If we assume that $D < 1$ and $\gamma_a < 1$, then the functions $r_1, r_2, \delta_1, \delta_2$ defined above satisfy assumptions \eqref{HJ:Hyp:Rdelta} and \eqref{HJ:Hyp:R}. Most of the conditions can be readily checked and we postpone the remaining technicalities to the Appendix \textcolor{red}{B}. For \eqref{HJ:Hyp:Rdelta} the only difficult part is to check that $e^{C_\delta x}\delta_2(x) \underset{x \to +\infty}{\longrightarrow} +\infty$, which is granted thanks to Lemma \ref{lm:C1}. For \eqref{HJ:Hyp:R}, thanks to Lemma \ref{lm:C2} and using the fact that
\[  \int_0^\infty \alpha(s) e^{- \gamma_d s - \int_0^s \alpha(z)dz - \int_0^s \beta(x,z)dz}ds \leqslant 1,   \]
we can choose
\[ c_N = \min(1-\delta_a,1-D)\qquad \mathrm{and} \qquad C_N = 2+D.\]
Therefore, we can apply Theorem\ref{HJ:thm:main_thm} and Theorem\ref{HJ:thm:interm_u}. In particular, if $d_1 = d_2$, then for all $\varepsilon\in\R_+^*$,
\[ \dfrac{n_\varepsilon(x,t)}{a_\varepsilon(x,t)} \underset{t\to+\infty}{\longrightarrow} q(x)  = \frac{ r_1(x) - r_2(x)  + \sqrt{ (r_1(x) - r_2(x))^2 + 4 \delta_1(x) \delta_2(x)}}{2 \delta_2(x)}. \]

\begin{figure}[h!]
	\begin{center}
		\includegraphics[width=300pt]{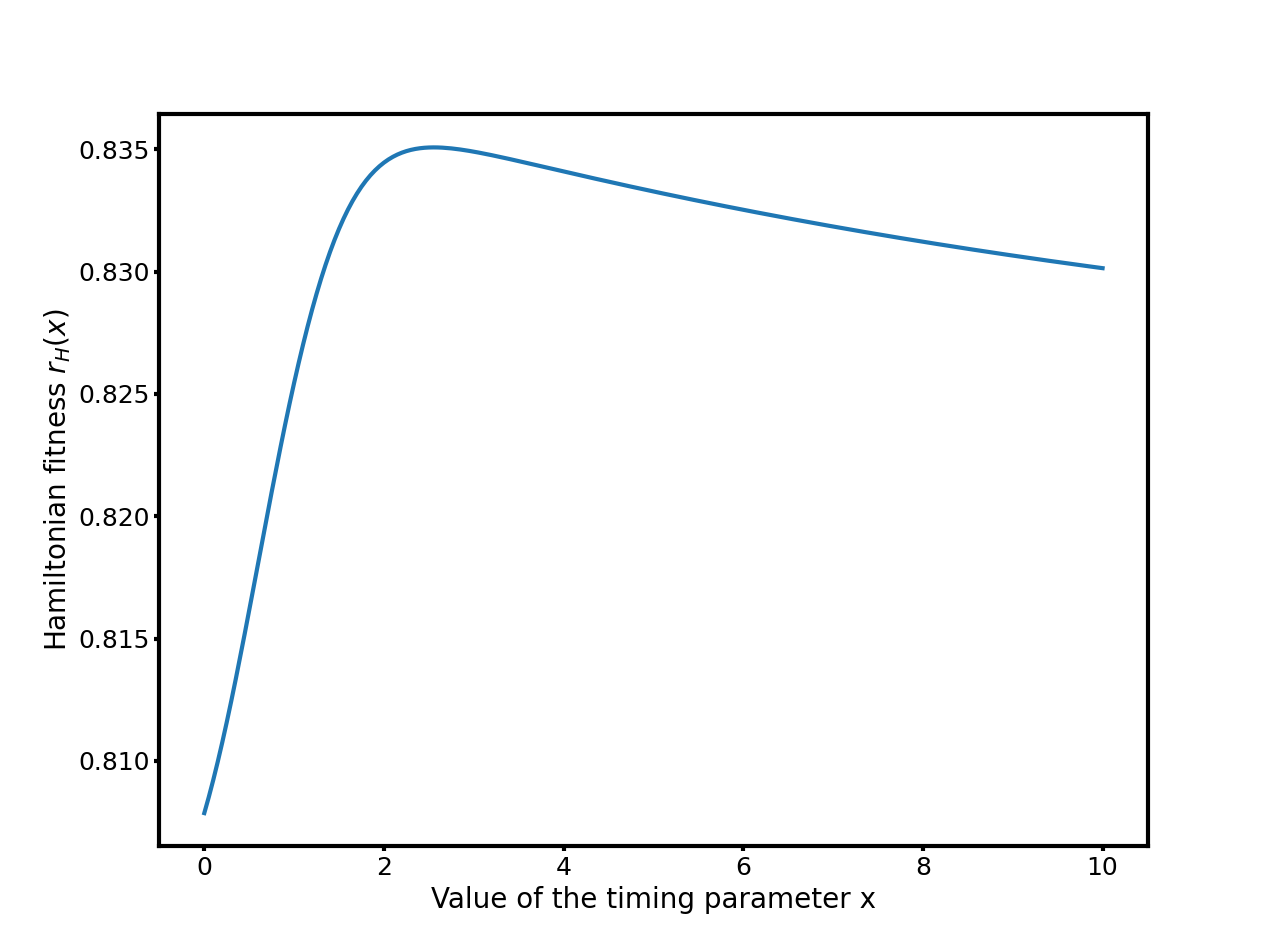}
	\end{center}
	\caption{Hamiltonian fitness $r_H(x)$ of the system for $p=3$. \label{fig:fitness_global}}	
\end{figure}

Moreover, recall that in the case $d_1=d_2=1$ the Hamiltonian defined in \eqref{HJ:def:H} can be decomposed into
\[ \mathcal H(\rho,N) = \rho^2 + r_H(x) - N, \]
with
\[ r_H(x) = \frac12\left(r_1 + r_2 + \sqrt{(r_1-r_2)^2 + 4 \delta_1 \delta_2 }\right),  \]
the \textit{Hamiltonian fitness}. This function also describes the stationary states as explained in Section \ref{HJ:sec:LongTermBehavior}. We can compute numerically this function $r_H$ to gain insights about the behaviour of the system in the limits $\varepsilon\to 0$ or $t\to+\infty$.

When the variable of interest is the mean time of adaptation $x$, with fixed $p=\bar p$, $r_H(x)$ has a unique global maximum as can be seen on Figure \ref{fig:fitness_global}. The numerical results in the next section indicate that when $\varepsilon$ goes to $0$, the solutions concentrate on a Dirac mass moving towards the maximum point. Hence, this model strengthens the hypothesis of \cite{RSX} that an optimal timing $x^*$ for adaptation tend to be favored by natural selection other long timescales. Here, this optimal time is expressed as
\[ x^* = \underset{x\in\R_+}{\mathrm{argmax}} \ r_H(x). \]
Let us mention that $r_H(x)$ should also drive the profile of the stationary state for fixed $\varepsilon$, since, as mentioned above, the formal equation for $w(x) = n_{1,\infty}(x)+n_{2,\infty}(x)$ is
\[ \varepsilon^2 \Delta w(x) - r_H(x)w(x) = 0.  \]

\begin{figure}[h!]
	\begin{center}
		\includegraphics[width=450pt]{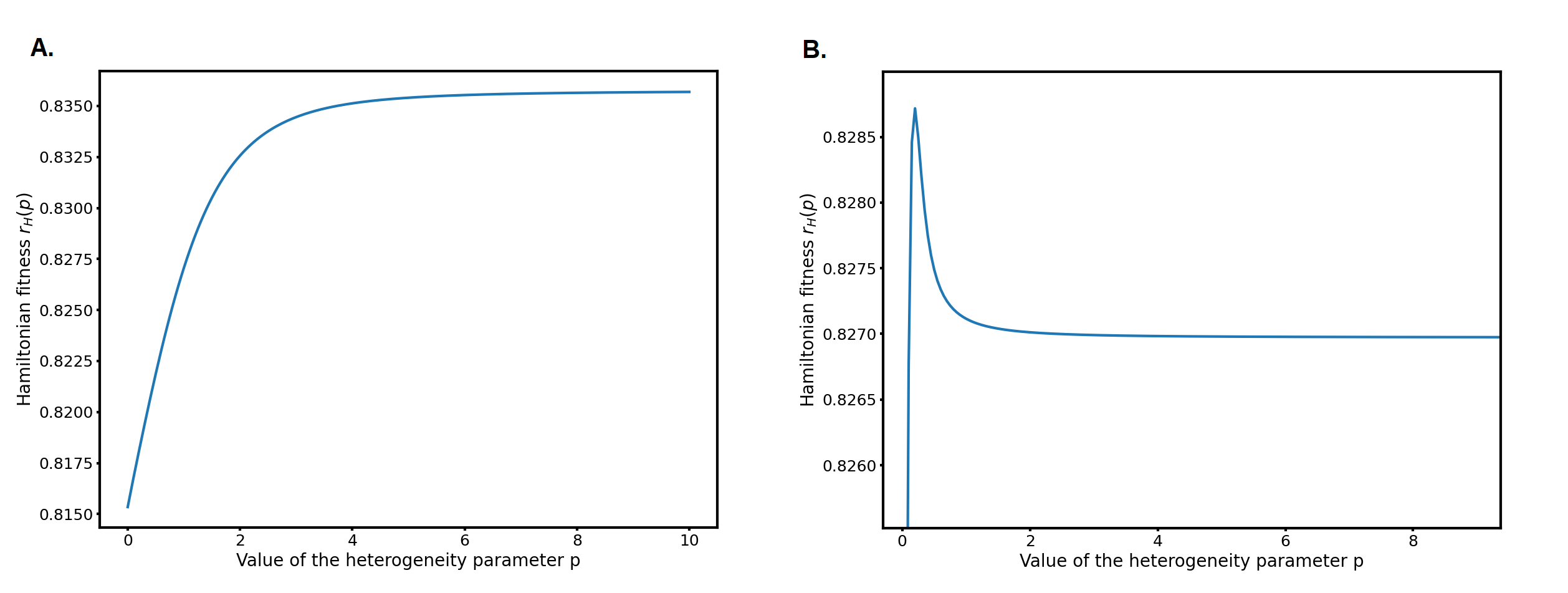}
	\end{center}
	\caption{Hamiltonian fitness $r_H(p)$ of the system for (A.) $\bar x=2$ and (B.) $\bar x=20$. \label{fig:fitness_global_p}}	
\end{figure}

If we fix an adaptation timing $x = \bar x$ and we take as a variable the adaptation heterogeneity parameter $p$, we can compute another equivalent fitness $r_H(p)$ which is displayed in Figure \ref{fig:fitness_global_p}.

As can be seen in Figure \ref{fig:fitness_global_p}A, for "reasonable" values of $\bar x$ the function $r_H:p\mapsto r_H(p)$ is increasing on $\R_+$. As we can observe in the numerical simulations in the following section, when $\varepsilon\to0$ the solutions concentrate on a Dirac mass that moves towards $+\infty$. This is in accordance with the findings of \cite{RSX} in the simpler ODE model: when the environment is predictable, the optimal strategy for the cells is to minimise the variance around any "good enough" adaptation timing, which amount to taking the largest possible value for $p$.

If the mean adaptation time $\bar x$ is large enough, for example $\bar x=20$, it can be seen in Figure \ref{fig:fitness_global_p}B that $r_H(p)$ has a unique global maximum. Since adaptation is really late, a smaller value $p^*$ (i.e. a larger variance for the adaptation) is selected to compensate.

However, as we said in the beginning of this section, in real life experiments the cells adapt with a variable timing. In \cite{RSX}, this fact was explained as a bet-hedging mechanisms in an unpredictable environment. When the optimisation procedure in the variable $p$ has to take into account a random variable in the repair function $\alpha$, a particular value $p^*$ is selected. Here we use the absolute-time part $\bar\alpha(t)$ in the repair function $\alpha(s,t)$ to model the changing environment. In the next section, we also make numerical experiments to explore what happens to the solution with a time-periodic $\bar \alpha$ function.

\section{Numerical simulations}\label{sec6:Num}

In this section, we investigate numerically the behaviour of the system \eqref{eqn:reduced_model} with the parameters and functions described in Section \ref{sec:3.2}.

We use a standard Cranck-Nicolson scheme for the Laplacians and the reaction terms are treated explicitly. The scheme uses an artificial Neumann boundary condition in the right end of the domain which has no impact over the results whatsoever as long as the numerical spatial domain is large enough and the scaling parameter $\varepsilon$ is small enough. This rather simple scheme appears to be very robust even for small $\varepsilon$ values, as long as the time step $dt$ is of the same order of magnitude of $\varepsilon$. The Python code we used to produce the numerical simulations is available at \url{https://github.com/pierreabelroux/Leculier_Roux_2021}. The figures can be obtained by uncommenting and running the different pieces of code in the part "Numerical experiments".

\begin{figure}[h!]
	\begin{center}
		\includegraphics[width=450pt]{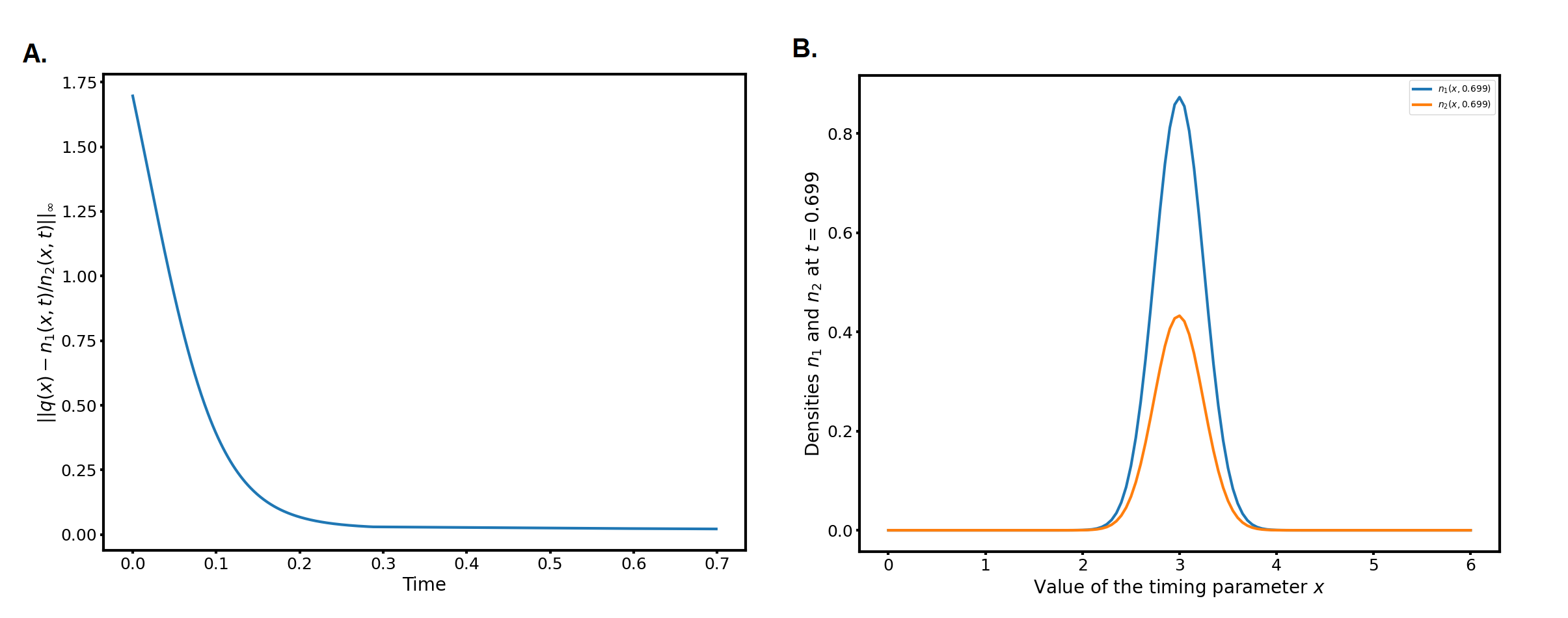}
	\end{center}
	\caption{ (A.) Distance between $n_1/n_2$ and $q$ in $L^\infty$ norm. The norm is computed over the finite numerical domain (B.) Plot of $n_1(\cdot,t)$ and $n_2(\cdot,t)$ for $t=0.699$. The diffusion parameters are $d_1=d_2=1$. \label{fig:cvToQ}}	
\end{figure}

As can be seen on Figure \ref{fig:cvToQ}, the convergence of the quantity $\norme{q(\cdot) - n_1(\cdot,t)/n_2(\cdot,t)}$ is very fast and the shapes of $n_1$ and $n_2$ are similar right after a short transitory period. In this figure we take 
\[n_1^0 = n_2^0 = \frac15 e^{-10(x-3)^2},\]
to avoid visual scaling problems with $n_1/n_2$ (this quotient can be very large in the first milliseconds for Gaussians with distant means) but it does not affect the speed of convergence towards $q(x)$ which is consistent across all types of initial data.

Consequently, we will only plot $n_1$ in the following numerical experiments for the sake of clarity. 

\subsection{Evolution along the parameter $x$ for fixed $p$}

\begin{figure}[h!]
	\begin{center}
		\includegraphics[width=450pt]{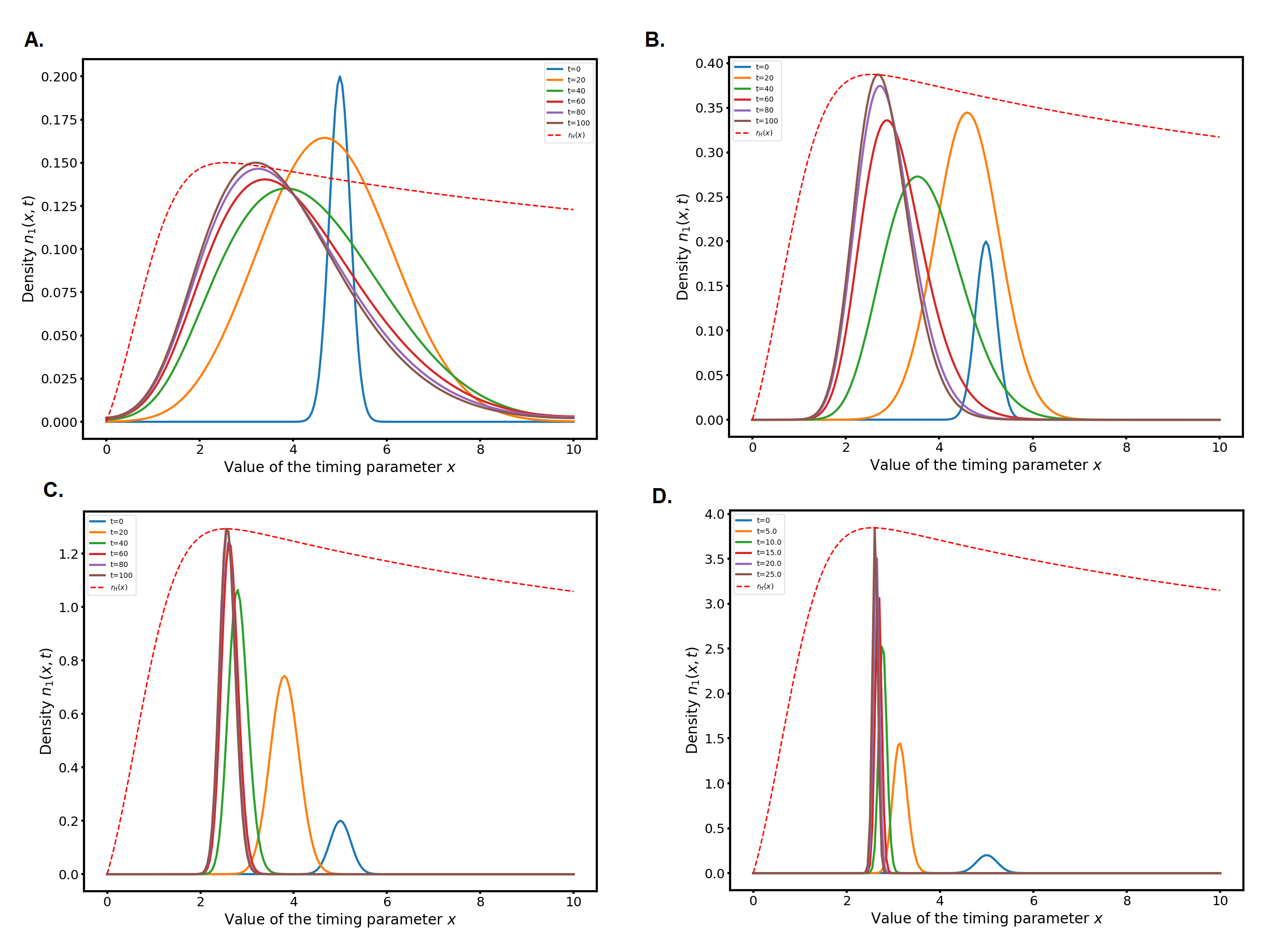}
	\end{center}
	\caption{Evolution in time of $n_1(x,t)$ from the same initial data for $p=3$ and different values of $\varepsilon$. The dashed line is the Hamiltonian fitness $r_H(x)$ which is re-scaled for the sake of readability. (A.) $\varepsilon = 0.05$ (B.) $\varepsilon = 0.01$ (C.) $\varepsilon = 0.001$ (D.) $\varepsilon = 0.0001$  \label{fig:evolution_x}}	
\end{figure}

For the fixed values $p=3$ and $d_1=d_2=1$, we simulate the system \eqref{eqn:reduced_model} for different values of $\varepsilon$ (see Figure \ref{fig:evolution_x}). As predicted by our theoretical results, when $\varepsilon$ tends to 0 the solution behaves like a Dirac mass moving towards the maximum point of the Hamiltonian fitness $r_H(x)$. This corresponds to the selection of an optimal mean value for the timing of the adaptation process, which provides an evolutionary explanation for the results of the laboratory experiments on budding yeasts \cite{CX}.

Yet, taking $d_1=d_2$ is not realistic from a biological point of view in this context. It is observed in experiments that adapted cells have a more unstable genome and thus the genetic diffusion might be very asymmetric \cite{CXMLMPCDT,CX}. Our theoretical setting gives us less clear results in the case $d_1\neq d_2$ because we can't define and simulate in a simple way a Hamiltonian fitness $r_H(x)$ to see were are the optimal traits: the Hamiltonian rather decompose into
\[ \mathcal H(\rho,N) = \dfrac{d_1+d_2}{2}\rho^2 + r_H(x,\rho) - N,\]
and the function $r_H$ then involves the gradient of the solution, which is evolving in space and time.

\begin{figure}[h!]
	\begin{center}
		\includegraphics[width=450pt]{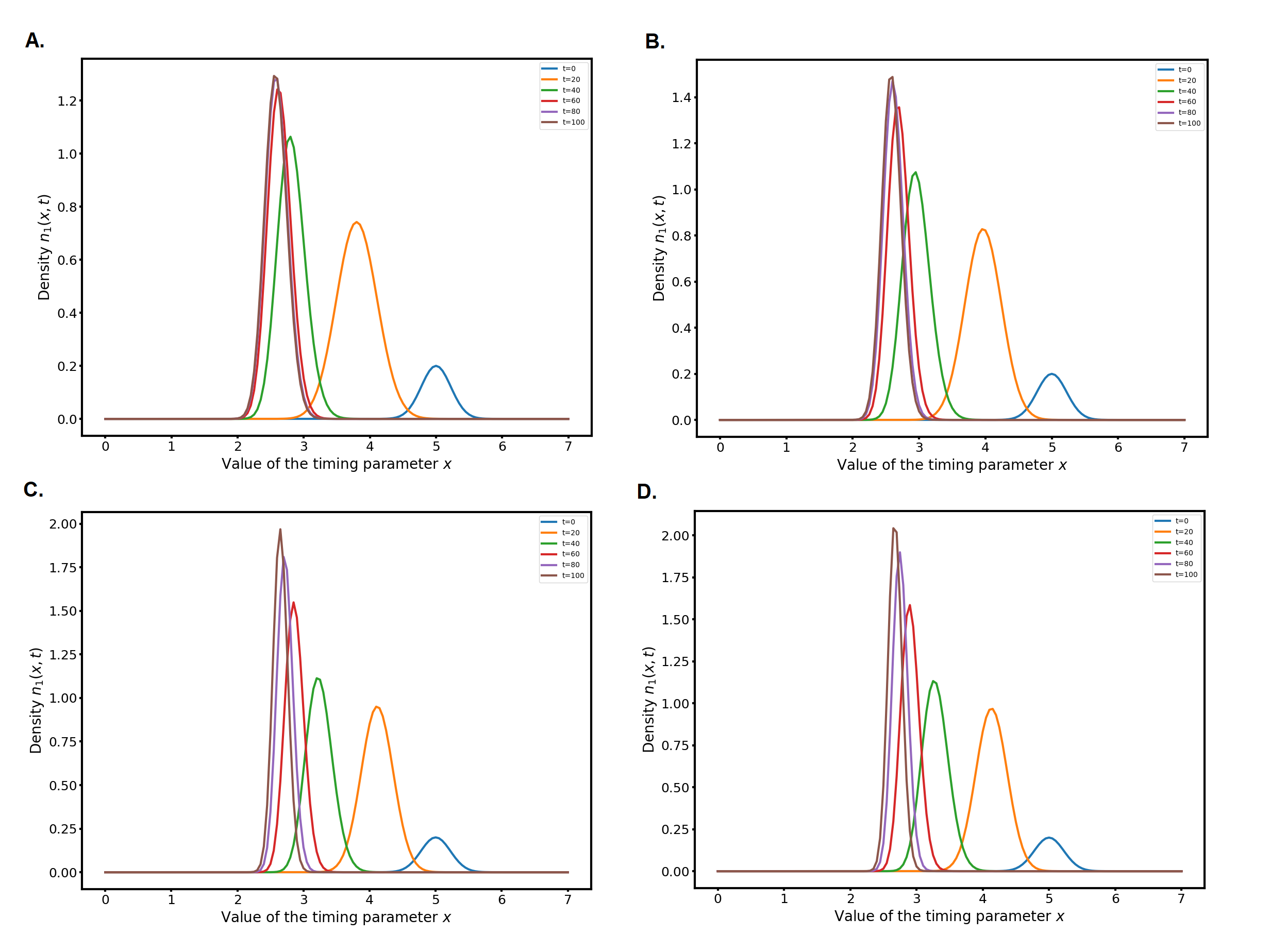}
	\end{center}
	\caption{Evolution in time of $n_1(x,t)$ from the same initial data for $p=3$, $\varepsilon=0.001$ and different pairs $(d_1,d_2)$ with same sum $d_1+d_2$ (A.) $d_1 = 1$, $d_2=1$ (B.)  $d_1 = 0.5$, $d_2=1.5$ (C.)  $d_1 = 0.05$, $d_2=1.95$ (D.)  $d_1 = 0$, $d_2=2$. \label{fig:evolution_d}}	
\end{figure}

Therefore, we run numerical experiments for $\varepsilon=0.001$ and different values of $d_1$ and $d_2$ (see Figure \ref{fig:evolution_d}) to see how it impacts the evolution of the solutions in time. It appears that the overall behaviour of the system is not changed much by the different values. The higher diffusion $d_2$ drives the evolution and even in the extreme case $d_1=0$ the qualitative behaviour is slower but similar to the case $d_1=d_2=1$. This last case, when the genetic diffusion is assumed to be negligible in healthy cells, is of particular interest for biologists for it allows to investigate adaptation to DNA damage as a mechanism promoting genetic diversity of organisms \cite{CX}. The stability of the model with respect to this particular case strengthens this hypothesis.

\subsection{Evolution along the parameter $p$ for fixed $x$}

\subsubsection{Stable environment}

\begin{figure}[h!]
	\begin{center}
		\includegraphics[width=450pt]{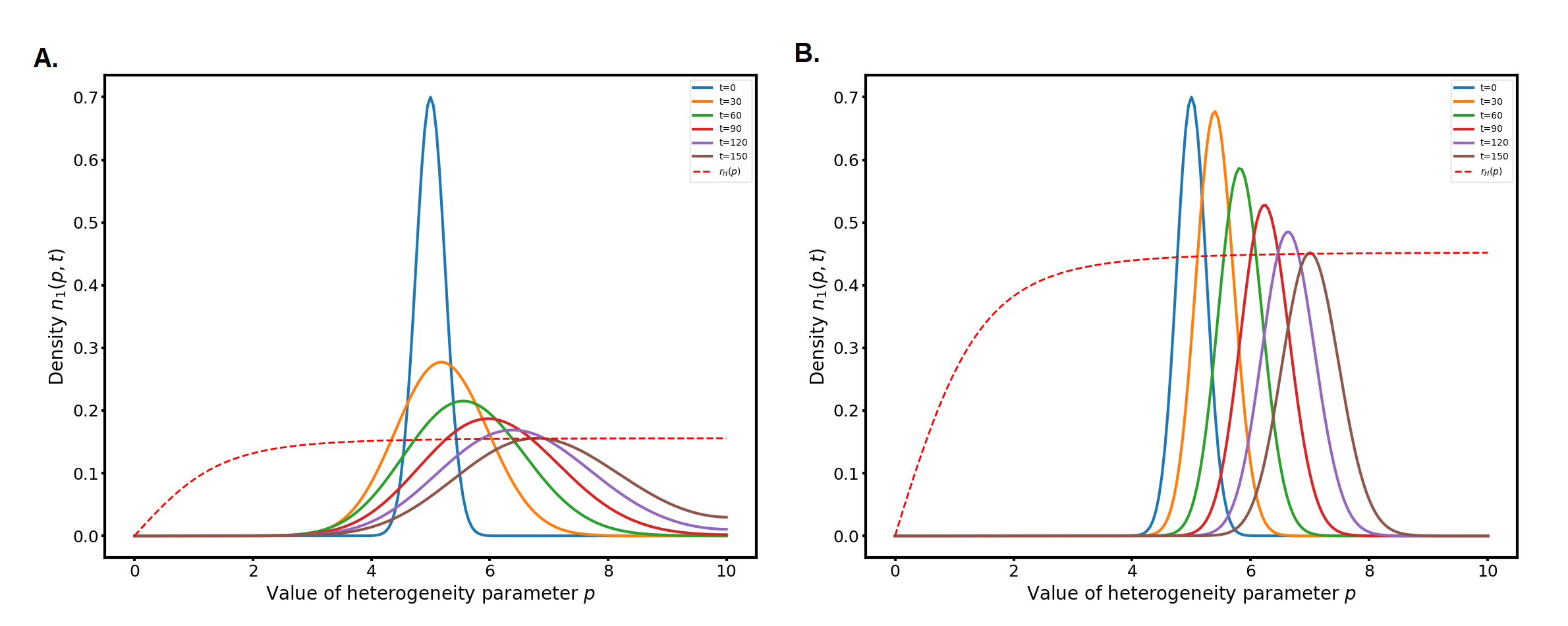}
	\end{center}
	\caption{ Evolution in time of $n_1(p,t)$ from the same initial data for $x=2$ and different values of $\varepsilon$. The dashed line is the Hamiltonian fitness $r_H(p)$ which is re-scaled for the sake of readability. (A.) $\varepsilon = 0.01$ (B.) $\varepsilon = 0.001$. \label{fig:evolution_p_stable}}
\end{figure}

If we fix the value $x=2$ for the timing parameter and take the heterogeneity parameter $p$ as the variable, we can observe (see Figure \ref{fig:evolution_p_stable}) that, according to our theoretical results, the solutions concentrate in the limit $\varepsilon\to0$ on a Dirac measure moving towards infinity. It is due to the function $p\mapsto r_H(p)$ being increasing for mild values of $x$. This implies that in a stable environment, the cells select an optimal adaptation timing $x^*$ for the adaptation to DNA damage and then minimise the variance around it, which amounts to maximising $p$.

\subsubsection{Time-varying environment}

Yet, the experiments on budding yeast cells show that there is a huge variance around the mean adaptation timing. Following \cite{RSX}, we try to explain this discrepancy between the model and reality by adding a varying environment. To make the problem numerically tractable we use
\begin{equation*}
\left\lbrace
\begin{aligned}
    &r_1(p,t) = 1 - D + \bar \alpha(t)D\int_0^\infty \alpha(s) e^{- \gamma_d s - \int_0^s \alpha(z)dz - \int_0^s \beta(p,z)dz}ds ,\\ 
    &\delta_2(p) = D \int_0^\infty \beta(s) e^{- \gamma_d s - \int_0^s \alpha(z)dz - \int_0^s \beta(p,z)dz} d s 
\end{aligned}
\right.
\end{equation*}  
rather than
\begin{equation*}\left\lbrace
\begin{aligned}
&r_1(p,t) = 1 - D + D\int_0^\infty \alpha(s,t) e^{- \gamma_d s - \int_0^s \alpha(z,t)dz - \int_0^s \beta(p,z)dz}ds ,\\
&\delta_2(p,t) = D \int_0^\infty \beta(s) e^{- \gamma_d s - \int_0^s \alpha(z,t)dz - \int_0^s \beta(p,z)dz} d s, 
\end{aligned}\right.
\end{equation*}
because the later requires the program to compute a full vector of integrals at each time step, which makes long time simulations intractable.

\begin{figure}[h!]
	\begin{center}
		\includegraphics[width=350pt]{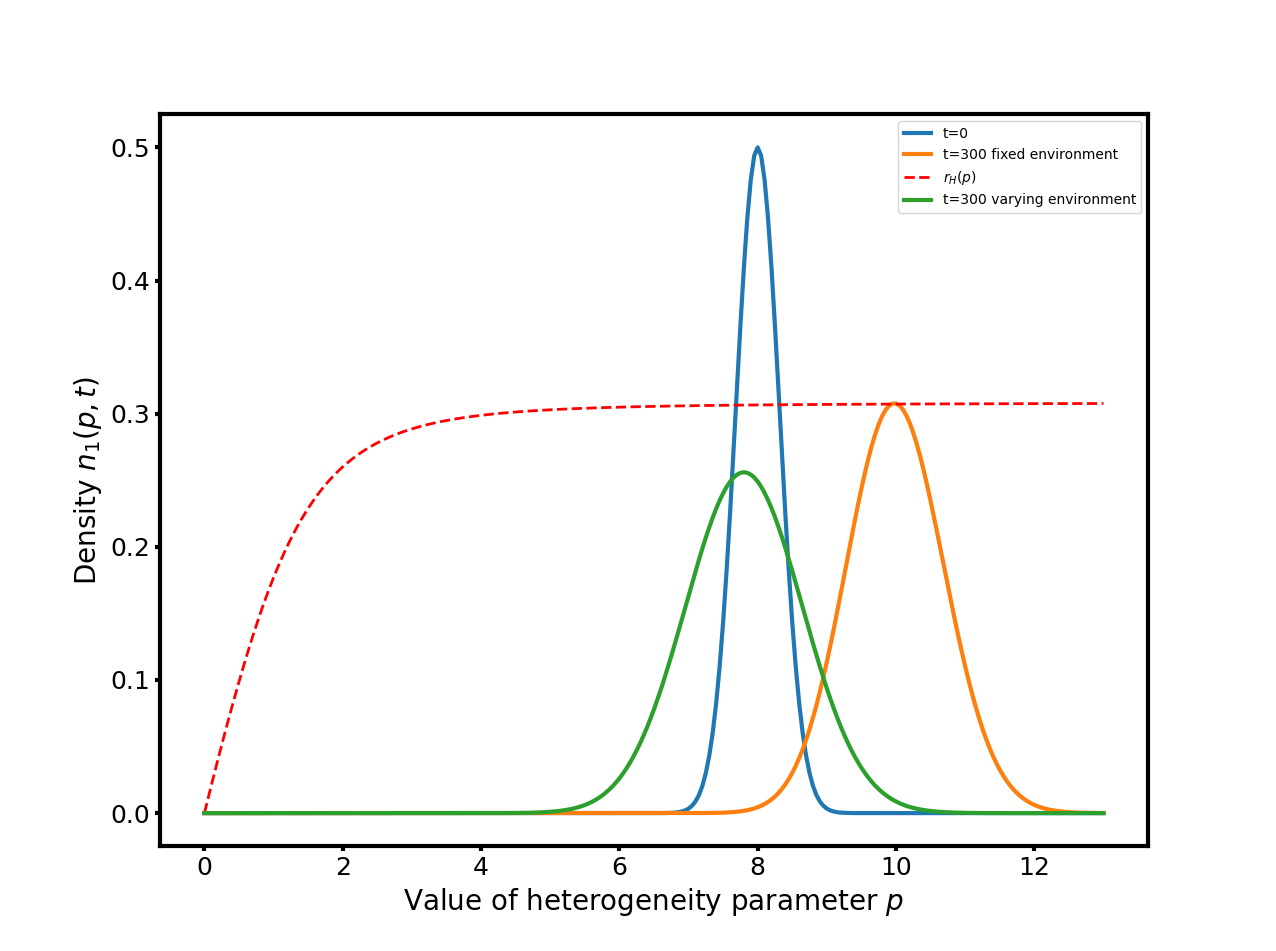}
	\end{center}
	\caption{ Evolution in time of $n_1(p,t)$ for $x=2$, $d_1=d_2=1$ and $\varepsilon=0.001$ in a stable or in a periodically varying environment from the same initial datum. \label{fig:evolution_p_varying}}	
\end{figure}

We choose the time-varying environmental function
\[ \bar \alpha(t) = \cos\left( \frac{\pi t}5  \right)^8\]
and we run the model in both a fixed and a time-varying environment from the same initial datum (see Figure \ref{fig:evolution_p_varying}). We can observe that at $t=300$ the results are very different. In the case of the stable environment, as in Figure \ref{fig:evolution_p_stable}, the mass moves towards $+\infty$. However, with the time-varying environment, the solution moves slowly towards the left. This numerical result strengthens the hypothesis of \cite{RSX} that the heterogeneity in time of the adaptation to DNA process could be due to a bet-hedging mechanism when cells face an unpredictable environment.

\section{Conclusion and perspectives}

In this article, we have investigated a cooperative two-population system of non-local parabolic PDEs motivated by a particular application in genetics: the understanding of the so-called \textit{adaptation to DNA damage} phenomenon. We used a Hamilton-Jacobi approach which is well understood for one population non-local models \cite{BP,BMP,PB}.

First, in order to prove a similar result in our setting, we combined the approach for the one-population model with tools developed in \cite{BES} for non-local systems. We wrote the Hamiltonian associated with the system in terms of an eigenvalue of the matrix $\textbf{D}+\textbf{R}$. After performing a Hopf-Cole transform, we first prove uniform regularity results on the solutions $u_i^\varepsilon$, which allows us to pass to the limit and obtain the constrained Hamilton-Jacobi equation for the limit $u$. When the diffusion coefficients of the two populations are identical, we obtain the additional result that $n_1^\varepsilon/n_2^\varepsilon$ converges in time towards a corrective term $q(x)$ dependent only on $r_1,r_2,\delta_1,\delta_2$.

Then, we have derived from the ODE model of \cite{RSX} a PDE system modelling the evolutionary dynamics of adaptation to DNA damage in a population of eukaryotic cells. Our theoretical results and numerical simulations allow us to support the findings of \cite{RSX} that:
\begin{itemize}
    \item natural selection could be responsible for the apparition of a precise mean timing for the adaptation phenomenon.
    \item the experimentally observed heterogeneity of individual adaptation timings in a population of cells could be explained by a bet-hedging mechanism while facing an unpredictable environment.
\end{itemize}

This study leaves open several questions on both the mathematical and biological sides.

First, our method relies heavily upon the cross terms $\delta_1$ and $\delta_2$ being positive. In the case of a competitive system  or a prey-predator setting, we cannot apply the same techniques. In particular, it is unlikely that the convergence towards a fixed corrective term $q(x)$ will hold true.

We did not address rigorously the question of the long time behaviour of the system. Our heuristic reasoning and the numerical simulations indicate strongly that there is convergence in time of $n_1^\varepsilon+n_2^\varepsilon$ towards a stationary state of the one-population model endowed with the Hamiltonian fitness of the system. A careful analysis is needed to validate this result.

Regarding the bet-hedging explanation for the heterogeneity of adaptation to DNA damage, our theoretical framework has to be adapted to prove solid results. It would be very useful to have a theory able to encompass the same kind of system but with time-varying coefficients as in \cite{FM} for a single species. The question of periodically changing environment is important in many biological applications. It would be especially important to study theoretically and numerically the influence of the time period of those coefficients.

 It would also be interesting to study this kind of two population system in higher dimension. In particular, in our biological setting, it would be interesting to have a bi-dimensional space $(x,p)$ for the genetic trait with Neumann boundary conditions on the boundaries $x=0$ and $p=0$ in order to validate the idea that in a stable environment the solution concentrates on a Dirac mass moving at the same time towards the line $(x^*,p)$ and the direction $p=+\infty$ in the $\epsilon \to 0$ limit. In this bi-dimensional space for the genetic trait, it would also be possible to study the effect of a time-periodic environment in a more realistic framework.
 
 Last, it could be useful to study the more complex model \eqref{eqn:horrible_1}--\eqref{eqn:horrible_4} theoretically and numerically in order to verify that the quasi-static approximation does not hide key features of the biological phenomenon. This might require cumbersome computations and significant computing power, but it remains feasible in principle.

\appendix

\section{Existence and bounds of $N_\varepsilon$}

We prove in this section the existence of a solution of \eqref{HJ:eq:main}. We use the classical Picard Banach fix point Theorem. The details follows the Appendix A of \cite{BMP}.

Let $T>0$ be a given time and $\mathcal{A}$ be the following closed subset:
\[ \mathcal{A} := \left\lbrace \begin{pmatrix} n_1 \\ n_2 \end{pmatrix} \in C([0,T], (L^1_x(\mathbb{R}^+))^2) :\quad  n_i \geq 0, \quad  \int_{\mathbb{R}^+} n_1(x,t) + n_2(x,t) dx < a \right\rbrace\]
where $a = N(0)e^{\frac{C_R T}{\varepsilon}}$ and $C_R =C_r + C_\delta$. Next, we define $\Phi$ the following application 
\[  \begin{aligned}
\Phi \quad  : \ & \ \mathcal{A} &&\rightarrow \quad \mathcal{A}\\
&\begin{pmatrix} n_1 \\ n_2 \end{pmatrix} &&\mapsto \quad  \begin{pmatrix} m_1 \\ m_2 \end{pmatrix}
\end{aligned}\]
where $\textbf{m } = \begin{pmatrix} m_1 \\ m_2 \end{pmatrix}$ is the solution of 
\begin{equation}\label{HJ:eq:App1}
\left\lbrace
\begin{aligned}
& \partial_t \textbf{m} - \varepsilon \textbf{D} \partial_{xx} \textbf{m} =  \frac{1}{\varepsilon}\widetilde{\textbf{R}}(x, N_n) \textbf{m} \\
&\partial_x \textbf{m}(x=0) = 0\\
&\textbf{m}(t= 0) = \textbf{n}_0
\end{aligned}
\right.
\end{equation}
where 
\[ N_n = \int_0^{+\infty} (1 \ 1) \textbf{n}(x,t)dx = \int_{0}^{+\infty}[ n_1(x,t) + n_2(x,t) ] dx \]
and
\[ \widetilde{\textbf{R}}(x,N) =  \left\lbrace 
\begin{aligned}
&\textbf{R}(x, \frac{c_N}{2}) && \text{ if } N<c_N, \\
&\textbf{R}(x,N) &&\text{ if } N \in (\frac{c_N}{2}, C_N), \\
&\textbf{R}(x, 2C_N) && \text{ if } N>C_N. 
\end{aligned}\right. \]
The aim is to verify that $\Phi$ satisfies two claims:
\begin{enumerate}
\item $\Phi$ maps $\mathcal{A}$ into itself, 
\item $\Phi$ is a contractive application for $T$ small enough. 
\end{enumerate}

\vspace{0.4cm}

\textbf{Proof of claim 1. } Let $\textbf{n} \in \mathcal{A}$ and $\textbf{m} = \Phi(\textbf{n})$. By the maximum principle, we have that $m_i \geq 0$. It remains to prove the $L^1$ bound. According to \eqref{HJ:eq:App1}, we have 
\[ \begin{aligned}
\partial_t (\int_0^{+\infty} m_1 (x,t) + m_2(x,t)dx) &= \frac{1}{\varepsilon } \int_0^{+\infty}((r_1(x, N_n) + \delta_2(x))  m_1(x,t)+ (r_2(x, N_n) + \delta_1(x)) m_2(x,t) ) dx \\
&\leq  \frac{C_R}{\varepsilon} \int_{0}^{+\infty} m_1(x,t) + m_2(x,t)dx. 
\end{aligned}\]
Next, we conclude thanks to the Gronwall Lemma that 
\[ \int_{0}^{+\infty} m_1 (x,t) + m_2(x,t)dx \leq N_0 e^{\frac{C_R t}{\varepsilon}} \leq N_0 e^{\frac{C_R T}{\varepsilon}} = a. \]
It finishes the proof of claim 1.

\vspace{0.4cm}

\textbf{Proof of claim 2. } Let $\textbf{n}_1, \textbf{n}_2 \in \mathcal{A}$, $\textbf{m}_1 = \Phi(\textbf{n}_1)$ and $\textbf{m}_2 = \Phi(\textbf{n}_2)$. We have 
\[ \partial_t (\textbf{m}_1 - \textbf{m}_2 ) = \varepsilon \textbf{D} \partial_{xx} (\textbf{m}_1 - \textbf{m}_2 )+ \frac{1}{\varepsilon } \widetilde{\textbf{R}}(x, N_1) (\textbf{m}_1 - \textbf{m}_2) +[\widetilde{\textbf{R}}(x, N_2) - \widetilde{\textbf{R}}(x, N_1) ] \textbf{m}_2 .\]
We recall that $\| \textbf{m}_2 \|_{L^1_x} \leq a$ and $[\widetilde{\textbf{R}}(x, N_2) - \widetilde{\textbf{R}}(x, N_1) ] =(N_2 - N_1) \textbf{I}_2 $. Next, by multiplying at left by $\begin{pmatrix} 1 & 1 \end{pmatrix}$ and integrating over space, we deduce that
\[ \partial_t \| \textbf{m}_1 - \textbf{m}_2 \|_{L^1_x} \leq \frac{C_R}{\varepsilon} \| \textbf{m}_1 - \textbf{m}_2 \|_{L^1_x} + a \| \textbf{n}_1 - \textbf{n}_2 \|_{L^1_x}.\]
Since $\| (\textbf{m}_1 - \textbf{m}_2)(t=0) \|_{L^1_x} = 0$, we conclude thanks to the Gronwall Lemma that 
\[ \| \textbf{m}_1 - \textbf{m}_2 \|_{L^\infty_t, \ L^1_x} \leq \frac{\varepsilon N(0)e^{\frac{C_R T}{\varepsilon}}  }{C_R} (e^{\frac{C_R T}{\varepsilon}} - 1) \| \textbf{n}_1 - \textbf{n}_2 \|_{L^\infty_t, \ L^1_x}. \]
Remarking that $ \frac{\varepsilon N(0)e^{\frac{C_R T}{\varepsilon}}  }{C_R} (e^{\frac{C_R T}{\varepsilon}} - 1)  \to 0$ as $T \to 0$, we conclude that for $T>0$ small enough, $\Phi$ is contractive. It concludes the proof of existence. 

\vspace{0.4cm}

Next, we focus on the bounds of $N_\varepsilon$. According to \eqref{HJ:eq:main}, we have 
\[ \partial_t N_\varepsilon (x,t) =\frac{1}{\varepsilon} \int_{0}^{+\infty} \begin{pmatrix} 1 & 1 \end{pmatrix} \textbf{R}(x, N_\varepsilon) \textbf{n}^\varepsilon (x,t)dx.\]
A direct computation provides that 
\[ \underset{ i, j \in \left\lbrace 1,2 \right\rbrace, \ i \neq j}{\min} ( r_i(x) + \delta_j(x)  -N_\varepsilon )\frac{N_\varepsilon}{\varepsilon} \leq \partial_t N_\varepsilon \leq \underset{ i, j \in \left\lbrace 1,2 \right\rbrace, \ i \neq j}{\max} ( r_i(x) +\delta_j(x)  -N_\varepsilon )\frac{N_\varepsilon}{\varepsilon} .\]
Therefore, since each components of the $\min$ (resp. $\max$) in \eqref{HJ:Hyp:R} is decreasing with respect to $N$, we deduce that if $N_\varepsilon(t_0) = c_N$ then $\partial_t N_\varepsilon(t_0) > 0$ (resp. if $N_\varepsilon (t_0)= C_N$ then $\partial_t N_\varepsilon(t_0) < 0$). The conclusion follows.

\section{Checking the technical hypotheses in the main application}

\begin{lemma}\label{lm:C1}
    Denote
    \begin{equation}\label{h1_b}A_\infty := \int_{0}^{+\infty} \alpha(s)ds < +\infty \end{equation}
    Then, there exists $\kappa\in\R_+^*$ such that
    \[ \forall x\in\R_+, \  \delta_2(x) \geqslant D\e^{-A_\infty}(1+e^{-\kappa x})\e^{-2\gamma_d x}.  \]
\end{lemma}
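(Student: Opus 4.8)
\medskip
\noindent\textbf{Proof plan for Lemma \ref{lm:C1}.} The only use I would make of \eqref{h1_b} is the elementary bound $\int_0^s \alpha(z)\,\diff z \leq \int_0^{+\infty}\alpha(z)\,\diff z = A_\infty$, valid for every $s\geq 0$, which yields $\e^{-\int_0^s\alpha(z)\diff z}\geq \e^{-A_\infty}$ and hence
\[ \delta_2(x) \;\geq\; D\,\e^{-A_\infty}\int_0^{+\infty} \beta(x,s)\,\e^{-\gamma_d s}\,\e^{-\int_0^s \beta(x,z)\,\diff z}\,\diff s . \]
Thus the whole matter reduces to a lower bound, uniform in $x$, for the purely $\beta$-dependent integral on the right.

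The plan is to exploit that the logistic form of $\beta$ makes its primitive explicit: since $\beta(x,z)=\beta_m/(1+\e^{-p(z-x)})$,
\[ \int_0^s \beta(x,z)\,\diff z \;=\; \frac{\beta_m}{p}\,\ln\!\frac{1+\e^{p(s-x)}}{1+\e^{-px}} \;\leq\; \frac{\beta_m}{p}\ln 2 \;+\; \beta_m\,\max(s-x,0), \]
where I used $1+\e^{p(s-x)}\leq 2\max(1,\e^{p(s-x)})$ and $1+\e^{-px}\geq 1$; consequently $\e^{-\int_0^s\beta(x,z)\diff z}\geq 2^{-\beta_m/p}\,\e^{-\beta_m\max(s-x,0)}$. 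I would then combine this with $\beta(x,s)\geq \beta_m/2$ for $s\geq x$, discard the nonnegative contribution of $s<x$, and use $\int_x^{+\infty}\e^{-\gamma_d s}\e^{-\beta_m(s-x)}\,\diff s=\e^{-\gamma_d x}/(\gamma_d+\beta_m)$, to arrive at
\[ \delta_2(x) \;\geq\; D\,\e^{-A_\infty}\,\frac{2^{-\beta_m/p}\,\beta_m}{2(\gamma_d+\beta_m)}\;\e^{-\gamma_d x}. \]
Since $\e^{-\gamma_d x}\geq \e^{-2\gamma_d x}$ on $\R_+$ and $1+\e^{-\kappa x}$ is bounded, this is a bound of the announced type once the numerical constants are fixed and $\kappa$ is chosen suitably. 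A sharper route, not actually needed, is the change of variables $u=\int_0^s\beta(x,z)\,\diff z$, whose inverse is $s=x+\tfrac1p\ln\!\big((1+\e^{-px})\e^{pu/\beta_m}-1\big)$; it turns the integral into $\e^{-\gamma_d x}\int_0^{+\infty}\big((1+\e^{-px})\e^{pu/\beta_m}-1\big)^{-\gamma_d/p}\e^{-u}\,\diff u$, and $(1+\e^{-px})\e^{pu/\beta_m}-1\leq 2\,\e^{pu/\beta_m}$ then closes it with the same exponential rate.

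The one genuinely delicate point --- and the reason the precise (logistic) shape of $\beta$ is what matters --- is the $x$-uniform upper bound on $\int_0^s\beta(x,z)\,\diff z$ above: it is exactly what allows $\e^{-\int_0^s\beta(x,z)\diff z}$ to be minorised by an $x$-free constant times a function decaying exponentially in $s$, so that the remaining $s$-integral can be evaluated with an explicit lower bound that does not degenerate as $x\to+\infty$. Everything else is routine calculus, and \eqref{h1_b} enters only at the very first line.
\medskip
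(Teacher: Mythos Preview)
Your argument is correct and in fact yields a sharper inequality than the paper's own proof. You compute the explicit antiderivative of the logistic $\beta$, majorise $\int_0^s\beta(x,z)\,\diff z$ by $\tfrac{\beta_m}{p}\ln 2+\beta_m\max(s-x,0)$, restrict to $s\geq x$ where $\beta(x,s)\geq\beta_m/2$, and integrate directly to obtain $\delta_2(x)\geq C\,\e^{-\gamma_d x}$ with an explicit $C>0$. The paper proceeds differently: it integrates by parts to write the $\beta$-integral as $1-\gamma_d\int_0^{+\infty}\e^{-\int_0^s\beta}\e^{-\gamma_d s}\,\diff s$, splits that remaining integral at $s=2x$, and uses only the qualitative property $\int_x^{2x}\beta(x,s)\,\diff s\geq\kappa x$; this yields the weaker lower bound $(1-\e^{-\kappa x})\e^{-2\gamma_d x}$. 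Your route is more direct, leans more on the specific logistic shape, and gives the better rate $\e^{-\gamma_d x}$; the paper's route is slightly more robust in that any $\beta$ with the $\kappa x$ property suffices.

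One caveat you should be aware of: the printed statement carries a sign typo --- the paper's own proof arrives at $(1-\e^{-\kappa x})\e^{-2\gamma_d x}$, not $(1+\e^{-\kappa x})\e^{-2\gamma_d x}$, and the latter need not hold at $x=0$ when $A_\infty$ is small (since $\delta_2(0)\leq D$ always). This is why your final sentence about fixing constants and choosing $\kappa$ cannot be made literal against the $(1+\e^{-\kappa x})$ form; but your bound $C\e^{-\gamma_d x}$ is exactly what is needed for the sole application of the lemma, namely $\e^{C_\delta x}\delta_2(x)\to+\infty$.
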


\begin{proof}
    Note first that the form of $\beta$ implies that
    \begin{equation}\label{h2_b}
    \exists \kappa\in\R_+^*, \ \forall x\in\R_+^*, \ \int_x^{2x} \beta(x,s) ds \geqslant \kappa x. 
    \end{equation}
    Then, we have for all $x\in\R_+$,
    \[
    \delta_2(x) \geqslant D\e^{-A_\infty}\int_{0}^{+\infty}\beta(x,s)\e^{ - \int_0^s \beta(x,z)dz}\e^{-\gamma_d s} ds.
    \]
    By integration by parts we have
    \begin{multline*}
    \int_{0}^{+\infty}\beta(x,s)\e^{ - \int_0^s \beta(x,z)dz}\e^{-\gamma_d s} ds = 1 - \gamma_d \int_{0}^{+\infty} \e^{ - \int_0^s \beta(x,z)dz}\e^{-\gamma_d s} ds\\
    = 1 - \gamma_d\Big(\underbrace{\int_{0}^{2x} \e^{ - \int_0^s \beta(x,z)dz}\e^{-\gamma_d s} ds}_{:=\omega_1(x)} +  \underbrace{\int_{2x}^{+\infty}\e^{ - \int_0^s \beta(x,u)du}\e^{-\gamma_d s} ds}_{:=\omega_2(x)}\Big). 
    \end{multline*}
    Moreover,
    \[\omega_1(x)\leqslant \int_{2x}^{+\infty}\e^{-\gamma_d s} ds = \dfrac{1}{\gamma_d}(1-\e^{-2\gamma_d x}),\]
    and with \eqref{h2_b},
    \[ \omega_2(x) \leqslant \int_{2x}^{+\infty}\e^{ - \int_x^{2x} \beta(x,z)dz}\e^{-\gamma_d s} ds \leqslant \e^{ - \kappa x}\int_{2x}^{+\infty}\e^{-\gamma_d s} ds = \dfrac{\e^{-\kappa x}}{\gamma_d} \e^{-2\gamma_d x} .\]
    Thus, we have
    \[ \int_{0}^{+\infty}\beta(x,s)\e^{ - \int_0^s \beta(x,z)dz}\e^{-\gamma_d s} ds \geqslant (1-\e^{-\kappa x})\e^{-2\gamma_d x}, \] 
    and the result follows.
       \flushright\qed
\end{proof}
~\\

\begin{lemma}\label{lm:C2}
    Assume $D \leqslant 1$, then
    \[ \forall x\in\R_+, \ \delta_2(x) < 1. \]
    and there exist $K_3, K_4\in\R_+^*$ such that
    \[ \forall x\in\R_+, \  \delta_2(x) \leqslant k_4\e^{-K_3 x}.  \]
\end{lemma}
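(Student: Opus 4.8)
The plan is to work directly with the integral definition
\[
\delta_2(x) = D\int_0^{+\infty} \beta(x,s)\,\e^{-\gamma_d s - \int_0^s\alpha(z)\diff z - \int_0^s\beta(x,z)\diff z}\diff s,
\]
keeping only the factors that matter. Since $\alpha\geqslant 0$ we always have $\e^{-\int_0^s\alpha(z)\diff z}\leqslant 1$, and since $\gamma_d>0$ we have $\e^{-\gamma_d s}<1$ for every $s>0$; moreover $\beta(x,s)\,\e^{-\int_0^s\beta(x,z)\diff z} = -\tfrac{\diff}{\diff s}\e^{-\int_0^s\beta(x,z)\diff z}$ has integral at most $1$ over $(0,+\infty)$ (in fact exactly $1$, since $\beta(x,s)\to\beta_m>0$ forces $\int_0^{+\infty}\beta(x,z)\diff z=+\infty$). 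For the first assertion I would dominate the integrand of $\delta_2(x)$ by $\beta(x,s)\,\e^{-\int_0^s\beta(x,z)\diff z}$, the domination being \emph{strict} on $(0,+\infty)$ because of the factor $\e^{-\gamma_d s}$, whence $\delta_2(x) < D \leqslant 1$.

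For the exponential decay, the decisive point is that the naive bound above only gives $\delta_2(x)\leqslant D$, with no gain in $x$: one must split the domain of integration at $s=x/2$ and exploit two distinct mechanisms, namely that $\beta(x,\cdot)$ is exponentially small in $x$ on $\{s\leqslant x/2\}$ while the weight $\e^{-\gamma_d s}$ is exponentially small in $x$ on $\{s\geqslant x/2\}$. Concretely, dropping the factors $\e^{-\int_0^s\alpha}$ and $\e^{-\int_0^s\beta(x,z)\diff z}$ (both $\leqslant 1$) gives $\delta_2(x)\leqslant D\big(I_1(x)+I_2(x)\big)$ with $I_1(x)=\int_0^{x/2}\beta(x,s)\,\e^{-\gamma_d s}\diff s$ and $I_2(x)=\int_{x/2}^{+\infty}\beta(x,s)\,\e^{-\gamma_d s}\diff s$. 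On $[0,x/2]$ one has $x-s\geqslant x/2$, hence $\beta(x,s)=\beta_m/(1+\e^{p(x-s)})\leqslant\beta_m\,\e^{-p(x-s)}\leqslant\beta_m\,\e^{-px/2}$, so $I_1(x)\leqslant\tfrac{\beta_m}{\gamma_d}\e^{-px/2}$. On $[x/2,+\infty)$ one uses $\beta(x,s)\leqslant\beta_m$ together with $\int_{x/2}^{+\infty}\e^{-\gamma_d s}\diff s=\tfrac{1}{\gamma_d}\e^{-\gamma_d x/2}$, so $I_2(x)\leqslant\tfrac{\beta_m}{\gamma_d}\e^{-\gamma_d x/2}$. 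Adding, $\delta_2(x)\leqslant\tfrac{2D\beta_m}{\gamma_d}\,\e^{-\frac12\min(p,\gamma_d)\,x}$, which is the claim with $K_3=\tfrac12\min(p,\gamma_d)$ and $k_4=\tfrac{2D\beta_m}{\gamma_d}$.

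I do not expect a genuine obstacle here: the argument is elementary and self-contained, relying only on positivity of $\alpha$ and on the explicit logistic form of $\beta$. The one point requiring care is precisely the domain splitting in the second part — a single one-line majorization cannot produce the decaying factor $\e^{-K_3 x}$, since the "mass" $\int_0^{+\infty}\beta(x,s)\,\e^{-\int_0^s\beta(x,z)\diff z}\diff s$ is of order $1$ uniformly in $x$; the decay materialises only once one separates the region where $\beta(x,\cdot)$ is small from the region where $\e^{-\gamma_d\,\cdot}$ is small.
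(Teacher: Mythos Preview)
Your proof is correct and follows essentially the same route as the paper: for the strict bound you drop the $\alpha$-factor and use $\e^{-\gamma_d s}<1$ together with $\int_0^{+\infty}\beta(x,s)\,\e^{-\int_0^s\beta(x,z)\diff z}\diff s=1$, and for the exponential decay you split the integral at $s=x/2$, exploiting the smallness of $\beta(x,\cdot)$ on the left piece and of $\e^{-\gamma_d s}$ on the right piece. The only cosmetic difference is that the paper retains the factor $\e^{-\int_0^s\beta(x,z)\diff z}$ in the right piece and integrates it exactly, whereas you simply bound $\beta\leqslant\beta_m$; your version is in fact slightly cleaner and yields explicit constants.
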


\begin{proof}
    Note that there exists two constants $K_1,K_2\in\R_+^*$ such that
    \[ \forall x\in\R_+^*, \ \int_0^{\frac{x}{2}} \beta(x,s) ds \leqslant K_1 \e^{-K_2 x}, \]
   We have
    \[  \delta_2(x) \leqslant \int_0^{+\infty} \beta(x,s) e^{-\gamma_d s - \int_0^s \alpha(z)dz - \int_0^s \beta(x,z)dz }ds, \]
    By positivity of $\gamma_d$ and $A(\cdot)$,
    \[ \delta_2(x) < \int_0^{+\infty} \beta(x,s)\e^{\int_0^s \beta(x,z)dz }ds = \left[  -\e^{-\int_0^s \beta(x,z)dz}  \right]_{s=0}^{s=+\infty} = 1, \]
    because $\underset{s\to+\infty}{\lim} \int_0^s \beta(x,z)dz = +\infty$.
    
    We compute :
    \[ \delta_2(x) < \underbrace{\int_0^{\frac x 2} \beta(x,s)\e^{-\gamma_d s - \int_0^s \beta(x,z)dz} ds}_{:=\omega_1(x)} + \underbrace{\int_{\frac x 2}^{+\infty} \beta(x,s) \e^{-\gamma_d s - \int_0^s \beta(x,z)dz} ds}_{:=\omega_2(x)}.  \]
    Moreover,
    \[  \omega_1(x) \leqslant \int_0^{\frac x 2}  \beta(x,s) ds \leqslant K_1\e^{-K_2 x},  \]
    and
    \begin{multline*} \omega_2(x) \leqslant \e^{- \frac{ \gamma_d }{2} x} \int_{\frac x 2}^{+\infty}  \beta(x,s) \e^{\int_0^s \beta(x,z)dz }ds\\ = \e^{- \frac{ \gamma_d }{2} x} \left[  -\e^{-\int_0^s \beta(x,z)dz}  \right]_{\frac x 2}^{+\infty} = \e^{- \frac{ \gamma_d }{2} x} \e^{- \int_0^{\frac x 2} \beta(x,s)ds} <  \e^{- \frac{ \gamma_d }{2} x}.    \end{multline*}
    Hence,
    \[ \delta_2\leqslant (1 + K_1) \e^{-\min(\frac{\gamma_d}{2}, K_2) x }.  \]
    
    \flushright\qed
\end{proof}

\vspace{0.5cm}

\noindent{\Large\textbf{Acknowledgements:}} Alexis Leculier and Pierre Roux were supported by the ERC ADORA. Pierre Roux was supported by the Advanced Grant Non local-CPD (Nonlocal PDEs
for Complex Particle Dynamics: Phase Transitions, Patterns and Synchronization) of the European
Research Council Executive Agency (ERC) under the European Union’s Horizon 2020 research and
innovation programme (grant agreement No.883363). Both autors want to warmly thank Beno\^{i}t Perthame for his help and precious advice. Both authors want also to thank Zhou Xu for the time dedicated to fruitful discussions about the biological motivation of this project. 

\footnotesize{\bibliographystyle{abbrv}
\bibliography{Biblio}}

\end{document}